\renewenvironment{titlepage}{%
  \thispagestyle{empty}\setcounter{page}{1}
  \vspace*{\fill}
}{%
  \vspace{1\baselineskip}
  \vspace*{\fill}
}
\renewcommand\nomgroup[1]{%
  \item[\bfseries
  \ifstrequal{#1}{M}{Model}{%
  \ifstrequal{#1}{N}{Numerical Scheme}{%
  \ifstrequal{#1}{P}{Bayesian inverse problem}{%
  \ifstrequal{#1}{O}{Finite range approximation}{%
  \ifstrequal{#1}{Z}{Other symbols}{}}}}}%
]}
\newtheorem{theorem}{Theorem}[section]
\newtheorem{lemma}[theorem]{Lemma}
\newtheorem{corollary}[theorem]{Corollary}
\theoremstyle{definition}
\newtheorem{definition}[theorem]{Definition}
\theoremstyle{remark}
\newtheorem{remark}[theorem]{Remark}
\numberwithin{equation}{section}
\newtheorem{assumptions}[theorem]{Assumptions}
\numberwithin{equation}{section}
\begin{document}
\setcounter{page}{1}

\title[Splitting algorithm on metric spaces]{Operator splitting algorithm for structured population models on metric spaces}

\author[]{Carolin Lindow$^1$, Christian D\"{u}ll$^1$, Piotr Gwiazda$^{2,5}$, Błażej Miasojedow$^{3}$, Anna Marciniak-Czochra$^{1,4,*}$}

\address{\small $^{1}$ Institute for Mathematics, Heidelberg University, Germany
}
\address{\small $^{2}$ Institute of Mathematics of Polish Academy of Sciences, Warsaw, Poland
}
\address{\small$^{3}$ 
Institute of Applied Mathematics and Mechanics, University of Warsaw, Poland
}
\address{\small $^{4}$ Interdisciplinary Center for Scientific Computing (IWR), Heidelberg University, Germany
}
\address{\small $^{5}$ Interdisciplinary Centre for Mathematical and Computational Modelling (ICM), University of Warsaw, Poland
}

 \email{*corresponding author anna.marciniak@iwr.uni-heidelberg.de}


\subjclass[2020]{28-08,65J08,65J22,65M57}

 \keywords{Structured populations, non-negative Radon measures, particle method, operator splitting, Bayesian inverse method}



\begin{titlepage}
{\centering
  {\Large Operator splitting algorithm for structured population models on metric spaces\par}
  \vspace{2\baselineskip}
  Carolin Lindow$^1$, Christian D\"{u}ll$^1$, Piotr Gwiazda$^{2,5}$, Błażej Miasojedow$^{3}$, Anna Marciniak-Czochra$^{1,4,*}$\par
  \vspace{1\baselineskip}
  }
  {\bf Abstract. } In this paper, we propose a numerical scheme for structured population models defined on a separable and complete metric space. In particular, we consider a generalized version of a transport equation with additional growth and non-local interaction terms in the space of nonnegative Radon measures equipped with the flat metric.\\
The solutions, given by families of Radon measures, are approximated by linear combinations of Dirac measures. For this purpose, we introduce a finite-range approximation of the measure-valued model functions, provided that they are linear. By applying an operator splitting technique, we are able to separate the effects of the transport from those of growth and the non-local interaction. We derive the order of convergence of the numerical scheme, which is linear in the spatial discretization parameters and polynomial of order $\alpha$ in the time step size, assuming that the model functions are $\alpha$ Hölder regular in time.\\
In a second step, we show that our proposed algorithm can approximate the posterior measure of Bayesian inverse models, which will allow us to link model parameters to measured data in the future.\\
{\tiny $^{1}$ Institute for Mathematics, Heidelberg University, Germany\\
$^{2}$ Institute of Mathematics of Polish Academy of Sciences, Warsaw, Poland\\
$^{3}$ 
Institute of Applied Mathematics and Mechanics, University of Warsaw, Poland\\
$^{4}$ Interdisciplinary Center for Scientific Computing (IWR), Heidelberg University, Germany\\
$^{5}$ Interdisciplinary Centre for Mathematical and Computational Modelling (ICM), University of Warsaw, Poland\\
* Corresponding author, email: anna.marciniak@iwr.uni-heidelberg.de
}

{\tiny
{\bf 2020 Mathematical Subject Classification. }28-08, 65J08, 65J22, 65M57\\
{\bf Keywords.} Structured populations, non-negative Radon measures, particle method, operator splitting, Bayesian inverse method \\
{\bf Acknowledgement.} 
This work is funded by the Deutsche Forschungsgemeinschaft (DFG, German Research Foundation) under Germany 's Excellence Strategy EXC 2181/1 - 390900948 (the Heidelberg STRUCTURES Excellence Cluster). \\
Carolin Lindow and Anna Marciniak-Czochra are supported by the European Research Council (ERC) under the European Union’s Horizon 2020 research and innovation programme (project PEPS, no. 101071786).\\
Carolin Lindow is pleased to express gratitude to the Thematic Research Programme funded by the Excellence Initiative Research University at the University of Warsaw for financing her research stay in Warsaw.\\
The work of P. G. was partially supported by the National Science Centre (Poland), agreement no. 2023/51/B/ST1/01546.
{\bf Statements and Declarations.} 
All authors declare that they have no conflicts of interest.
}
\end{titlepage}

\section{Introduction}

Modeling natural phenomena such as population dynamics with mathematics is a powerful tool in various fields, including epidemiology \cite{Alahmadi2020}, \cite{Brauer2019}, \cite{Padmanabhan2021}, ecology \cite{Tredennick2021}, \cite{vandenBerg2022}, \cite{White2020}, and cell biology \cite{Danciu2023}, \cite{Gwiazda2012}, \cite{Lorenzi2019}. However, the practical value of these models largely depends on the ability to combine the theoretical model with real data \cite{Alahmadi2020}, so that the best parametrization can be identified. This in turn enables a thorough analysis of the underlying mechanisms as well as predictions about the process dynamics and their response to perturbations. It is therefore essential to develop powerful numerical tools for parameterizing  the proposed models. \\

In this paper, we  focus on structured population models formulated in measures, which naturally generalize the classical PDE formulation in an $L^1$ setting as they permit combining discrete and continuous transitions in a single model \cite{Diekmann2019}, \cite{dull2024}, \cite{ düll_gwiazda_marciniak-czochra_skrzeczkowski_2021},  \cite{Gwiazda2012}. It also allows to use initial values with discontinuous but integrable densities or even Radon measures without densities,  and to consider models whose solutions exhibit concentration phenomena as studied in \cite{Busse}. 
Furthermore, experimental measurements often only capture the number of individuals in certain cohorts which we can naturally represent using Dirac measures \cite{Carillo}, \cite{Colombo2018}.\\
In the framework for structured population models developed in \cite{düll_gwiazda_marciniak-czochra_skrzeczkowski_2021}, the following partial differential equation is considered on the space of Radon measures:

\begin{align}
\label{Eq:MeasurePDE}
    \partial_t \mu(t)+\nabla_x\big(b(t,x,\mu(t))\mu(t)\big)&=c(t,x,\mu(t))\\&\quad + \int\limits_{\mathbb R ^d} \eta(t,x,\mu(t))(y)\mathrm{d}[\mu(t)](y)+N(t,\mu(t)).
\end{align}
We shortly explain the model and the notion of solutions. The function $b$ denotes the velocity and direction of the change of the state variable, $c$ represents growth, the measure $\eta$ describes non-local transitions in the state space, and the measure $N$ is an influx function.
The solution $\mu(\cdot)$ takes values in the space of nonnegative Radon measures. Hence, the derivatives of the measure $\mu(\cdot)$ in \eqref{Eq:MeasurePDE} are to be understood in an appropriate weak sense using bounded Lipschitz functions as test functions \cite[Chapter 3.4]{düll_gwiazda_marciniak-czochra_skrzeczkowski_2021}.\\
In this paper, we work with a generalization of \eqref{Eq:MeasurePDE} to separable and complete metric spaces. This allows us to consider models that are based on  graphs, manifolds, or spaces with a non-Euclidean distance function. However, the trade-off for this general setting is that we may not rely on a linear structure, and therefore cannot formulate a PDE like \eqref{Eq:MeasurePDE}. Instead, we introduce an implicit representation formula involving push-forward measures \eqref{eq:mod_pushforward} and derive an adjusted weak formulation \eqref{eq:Weakform}. In Section \ref{Sec:ProblemSetting}, we give an overview of the considered setting, while Appendix \ref{Sec:SolutionsandProperties} summarizes the most important properties of the model and its solutions obtained in \cite[Chapter 3]{düll_gwiazda_marciniak-czochra_skrzeczkowski_2021}.\\

Based on this analytical setting, we propose a numerical approximation to the solutions of structured population models on the space of Radon measures. 
Inspired by particle-based algorithms, the initial measure is approximated by a linear combination of Dirac measures, which is justified by the density of the linear span of Dirac measures under the flat norm \cite[Theorem 1.37]{düll_gwiazda_marciniak-czochra_skrzeczkowski_2021}. The positions and masses of these Diracs are tracked, with only the masses and then only the locations being updated alternately. We provide a discrete approximation for the measure valued model functions in the case that they are linear. This discrete form ensures that the number of Dirac measures in the approximation remains finite.\\ 
 The key idea of our approach is based on semigroup splitting as introduced in \cite{MR2050900} which allows us to consider the growth and transport processes in \eqref{Eq:MeasurePDE} separately instead of simultaneously, which is a common approach in numerical algorithms for differential equations \cite{Blanes2024}. 
 Our proposed algorithm generalizes the particle-based operator splitting approximation introduced in \cite{Carillo} for \eqref{Eq:MeasurePDE} by extending it to the broad class of structured population models on Polish metric spaces, i.e. state spaces which are separable and complete.\\

 The numerical method that we consider here, is based on particle based simulations. Particle methods are used to simulate models with a large number of interacting particles or individuals \cite{Carillo} which appear in various scientific fields \cite{Holm2019} such as pedestrian flow \cite{Etikyala2014}, \cite{Mahato2018}, advection-selection-mutation models \cite{Guilberteau2024}, astrophysics and fluid dynamics \cite{Chertock2017}. A common assumption when applying particle methods is that the mass is conserved. This is however not the case with our structured population models, so that we modify the method to allow for new particles and particles with different masses. This also implies, that particles can no longer be interpreted as individual agents. The weighted sum of particles is an abstract approximation of the density of individuals.\\

Previously, structured population models have been simulated using a variety of numerical methods. As we consider models on Polish metric spaces, we need to adjust the already existing methods to our setting.\\
In a classical Euclidean setting, structured population models are usually described by hyperbolic partial differential equations.
The corresponding numerical methods are thus often adapted from fluid mechanics and include among others explicit and implicit Euler schemes for a fixed discretization of the state space \cite{Sulsky1994}.\\
Other numerical methods for solving structured population models  implement the method of characteristics commonly used to solve transport type PDEs \cite{Sulsky1994}. Recently, spectral methods for structured population models have been suggested \cite{XiaPHD}. Another approach is to adapt finite difference schemes to treat models with growth rates that change in sign, see \cite{Ackleh2019}. In \cite{Joshi2023}, the authors provide a tool to simulate structured population models using a variety of numerical methods.\\
For structured population models on the space of Radon measures, the Escalator Boxcar Train (EBT) algorithm proposed by \cite{deRoos} has been considered on the state space $\mathbb{R}^d$. This method explicitly deals with a non-local boundary term that arises through a birth process. General convergence of the EBT algorithm has been proven in \cite{Brnnstrm2013}, \cite{Gwiazda2014}, \cite{UlikowskaPHD} for Euclidean state spaces, whereas \cite{Gwiazda2022} focuses on the convergence of the method applied to a particular structured population model with  a non-local interaction term in two and three dimensional Euclidean state spaces.

In order to obtain meaningful results in the simulations, we need to parametrize the model. 
One common problem in mathematical modeling is to determine these parameters based on observations of the solution.  
Even if we can solve the forward problem, 
it is possible that the inverse problem 
is not well-posed as the inverse operator fails to be injective or continuous 
\cite{Stuart_2010}.
In this paper, we assume that the model is determined by finitely many  real valued parameters contained within a compact set. We prove that the approximate posterior measure obtained using the solution generated by our algorithm is close to the exact posterior measure in the flat metric.\\

The paper is structured as follows. In Section \ref{Sec:ProblemSetting}, we introduce the model and summarize the most important properties of the space of Radon measures equipped with the flat norm. The numerical scheme is introduced in Section \ref{Sec:NumericalScheme}. Next, in Section \ref{Bayes} we prove the continuity of the posterior density w.r.t. the model functions and the data in a Bayesian setting. The remainder of the paper is dedicated to the error analysis. Section \ref{Sec:approximation} treats the errors arising from the approximation of the initial value and the model functions. Lastly, in Section \ref{Sec:convergence}, we prove the convergence order of the numerical scheme by first showing that the semigroup splitting technique is applicable and then employing it to show the convergence of our numerical scheme. The concept of the proof is illustrated in Figure \ref{fig:overview}. For the reader's convenience, a summary of all of the notation used in this paper is provided  at the end of the paper.

\section{Problem Setting}

\label{Sec:ProblemSetting}
In this section, we summarize the setting of the structured population models from \cite{düll_gwiazda_marciniak-czochra_skrzeczkowski_2021}.\\
Let $(U,d)$ be a Polish metric space, i.e.  $U$ is separable and complete with respect to the metric $d$. This space will be used as the state space of the structured population model. 

By $BL(U)=\{\psi:U\rightarrow \mathbb R\big \vert \Vert \psi\Vert_{BL}<\infty\}$ we refer to the space of bounded Lipschitz functions on $U$ with image in $\mathbb{R}$. The space is equipped with the norm 
\begin{align*}
    \Vert \psi \Vert_{BL}:=\max\left\{ \sup\limits_{x\in U} \vert  \psi (x)\vert, \sup\limits_{x,y\in U,\ x\neq y} \frac{\vert  \psi (x)- \psi (y)\vert}{d(x,y)}\right\}.
\end{align*}
Throughout this paper, this space will be used as a test function space.\\
Denote by $\mathcal{M}^+(U)$ the cone of non-negative Radon measures on $U$ with finite total mass. 
We equip $\mathcal{M}^+(U)$ with the flat norm, which is given by
\begin{align}
    \label{def:flatNorm}
    \Vert \mu\Vert_{BL^*}=\sup_{ \Vert \psi\Vert_{BL}\leq 1}\  \int \limits_U \psi(x)\mathrm{d}\mu(x)
\end{align}
and denote the corresponding flat metric by $\rho_F$. The space $(\mathcal{M}^+(U),\rho_F)$ is a metric space but lacks the linear structure of a vector space. This set is used to define the solutions of the models.\\
As $(U,d)$ is separable by assumption, there exists a countable dense subset $(z_l)_{l\in\mathbb{N}}$ of $U$. The rational linear combinations
\begin{align}
\left\{\sum_{l=1}^L q_l\delta_{z_l}\vert L\in \mathbb N, q_1,...,q_L\in\mathbb{Q}_{\geq 0}\right\}
\end{align}
form a countable dense subset of the space $(\mathcal{M}^+(U),\rho_F)$ \cite[Theorem 1.37]{düll_gwiazda_marciniak-czochra_skrzeczkowski_2021}. \\
Next, we define a suitable notion of continuity. 
A family of measures $(\mu(t))_{t\in[0,T]}\subset\mathcal M^+(U)$ is narrowly continuous if for all $t\in[0,T]$ and all $\psi\in C^0_b(U)=\big\{ \varphi \in C^0(U)\big\vert \Vert\varphi\Vert_\infty<\infty\big\}$, it holds that
\begin{align*}
    \lim_{s\rightarrow t}\int_U \psi(x)\mathrm{d}[\mu(s)](x)=\int_U \psi(x)\mathrm{d}[\mu(t)](x).
\end{align*}
Narrow continuity is metricized on $\mathcal M^+(U)$ by the flat metric  \cite[Corollary 1.47]{düll_gwiazda_marciniak-czochra_skrzeczkowski_2021}.\\
For a Borel-measurable function $f:U\rightarrow U$ and a measure $\mu\in \mathcal{M}^+(U)$, the push-forward measure is denoted by $f_\#\mu$, i.e. for a measurable set $A\subset U$,  $f_\#\mu(A):=\mu(f^{-1}(A))$.\\

Now we are in a position to consider model \eqref{Eq:MeasurePDE} on a Polish metric state space. As discussed in \cite{dull2024}, the lack of a linear structure requires a formulation without derivatives. 
We study the nonlinear structured population model with solutions that are Radon measures. The solutions are defined as measures on metric spaces and are  described by push forwards according to the theory developed in \cite[Chapter 3]{düll_gwiazda_marciniak-czochra_skrzeczkowski_2021}.
We formulate the structured population model using the following model functions:
\begin{align}
&c: [0,T]\times U\times\mathcal{M}^+(U)\rightarrow \mathbb{R}\\
&N:[0,T]\times\mathcal{M}^+(U)\rightarrow \mathcal{M}^+(U)\\
&X:[0,T]\times[0,T]\times U\times([0,T]\rightarrow\mathcal{M}^+(U))\rightarrow U\\
&\eta:[0,T]\times U\times\mathcal{M}^+(U)\rightarrow\mathcal{M}^+(U).
\end{align}
\begin{definition}
\label{structuredPopulationModel}
A narrowly continuous map $\mu(\cdot):[0,T]\rightarrow \mathcal{M}^+(U)$ is called a solution to the nonlinear structured population model with initial value $\mu(0)$ on a Polish metric space $(U,d)$  if it satisfies
\begin{align}
\label{eq:mod_pushforward}
\begin{split}
\mu(t)=
&X(t,0,\cdot,\mu(\cdot))_\#\bigg(\mu(0)(\cdot)e^{\int\limits_0^t c(s,X(s,0,\cdot,\mu(\cdot)),\mu(s))\mathrm{d}s}\ \bigg)\\
&+\int\limits_0^t X(t,\tau,\cdot,\mu(\cdot))_\#\left(\int\limits_U [\eta(\tau,x,\mu(\tau))(\cdot)]\mathrm{d}[\mu(\tau)](x)e^{\int\limits_\tau^t c(s,X(s,\tau,\cdot,\mu(\cdot)),\mu(s))\mathrm{d}s}\right)\mathrm{d}\tau\\
&+\int\limits_0^t X(t,\tau,\cdot,\mu(\cdot))_\# \left(N(\tau,\mu(\tau))(\cdot)e^{\int\limits_\tau^t c(s,X(s,\tau,\cdot,\mu(\cdot)),\mu(s))\mathrm{d}s}\right)\mathrm{d}\tau.
\end{split}
\end{align}
The solution $\mu(t)$ describes the distribution of individuals on the state space $U$ at time $t$. \\
\end{definition}

Here, $X$ denotes the transport through the state space and it generalizes the flow of a vector field, see Remark \ref{AssumptionsX}. More precisely, A mass that was at a coordinate $x_0 \in U$ at time $t_0$ is transported such that it is located at $X(t_1,t_0,x_0,\mu(\cdot))\in U$ at time $t_1$. For a more detailed motivation for the nature of the model function $X$, see \cite[Chapter 3]{düll_gwiazda_marciniak-czochra_skrzeczkowski_2021}.
The real valued model function $c$ represents growth and decay.
Further, the measure $N$ is an influx function. Lastly, the measure $\eta$ describes non-local transitions in the state space, i.e. potentially discontinuous changes of the state variable, which do not have to be mass conserving. 
One  example for $\eta$  on the state space $\mathbb{R}^+$ is  cell division: one cell of size $x$ creates 2 cells of size $\frac{x}{2}$. 
See \cite[Chapter 5]{düll_gwiazda_marciniak-czochra_skrzeczkowski_2021} for further examples how this non-local function can be used in applications.
\\

 Existence and uniqueness of solutions to model \eqref{eq:mod_pushforward} are guaranteed by the results in \cite{düll_gwiazda_marciniak-czochra_skrzeczkowski_2021} under the assumptions listed in Appendix \ref{Appendix:Assumptions}. It can even be shown that solutions of \eqref{Eq:MeasurePDE} and \eqref{eq:mod_pushforward} coincide on the Euclidean space $\mathbb{R}^d$  \cite[Section 3.4]{düll_gwiazda_marciniak-czochra_skrzeczkowski_2021}.  For the convenience of the reader, a comprehensive summary can be found in Appendix \ref{Sec:SolutionsandProperties}. 
 In order to establish well-posedness and convergence of the numerical scheme, we additionally assume several conditions on the regularity  of the model functions. We state the detailed conditions in Appendix \ref{Appendix:Assumptions} and only give an overview here:
 \begin{itemize}
     \item $N$ and $\eta$ are tight i.e. most of the mass is concentrated within a compact set,
     \item The model functions $c$, $N$ and $\eta$ are  $\alpha$-Hölder continuous in time for some $\alpha\in(0,1]$ and uniformly Lipschitz continuous in the state variable and in the measure argument,
     \item  $c$, $N$ and $\eta$ have uniformly bounded norms,
     \item $c$ is assumed to be $C^1$ regular in time. If $\eta$ and $N$ are linear, we need the following conditions
 
       \begin{align}
       &\int\limits_U \psi(y)\mathrm d [\eta(t,x)](y) \in C^1([0,T])\text{ and }\\
       &\int\limits_U \psi(y)\mathrm d [N(t)](y) \in C^1([0,T])\\
       \end{align}
    for all test functions $\psi\in BL(U)$. If the functions are nonlinear, we need to assume that they have a finite range and that the coefficients are $C^1$ regular in time, see Assumption \ref{assumptionsOnModelFunctions} \eqref{Ass:finiteRangeNonlinear}.
    \item The transport function $X$ is uniformly continuous in time and Lipschitz continuous in both the state variable and in the measure argument. Furthermore,  $X$ fulfills a semigroup property.
     
 \end{itemize}

\section{The numerical scheme}
\label{Sec:NumericalScheme}
 We approximate the solutions to \eqref{eq:mod_pushforward} by linear combinations of Dirac measures, which we can imagine as particles of various masses moving through the state space. The simulation of the evolution of the particles is based on splitting the model into two operators.  The first one accounts for changes in the state variable due to the transport function $X$ whereas the second one describes the effects of $c$, $N$ and $\eta$, that change the mass of the particles and generate new particles.\\

According to Theorem \ref{Thm:solution}, the exact solution $(\mu(t))_{t\in[0,T]}$ of \eqref{eq:mod_pushforward} is tight, i.e. for any given $r>0$, we can find a compact set $K\subset U$ such that 
\begin{align}
\label{Eq:tightnessSolution}
    [\mu(t)](U\backslash K)\leq r
\end{align}
for all $t\in[0,T]$.\\
For the remainder of this paper, we fix r and restrict our analysis to the corresponding compact set K to capture the most important aspects of the dynamics. In particular, we approximate the model functions and solutions on K or rather on an open covering of K.

\phantomsection
\label{Paragraph:Dense_subset}
Due to the compactness $K$, the set can be covered by finitely many balls of radius $\varepsilon$ with center points $\{z_l\ \vert l=1,...,L\}\subset U$. The parameter $\varepsilon$ determines the accuracy of the approximations and the total number of support points.\\
 We will approximate the model functions $\eta$ and $N$ as well as the initial measure $\mu(0)$ on the set $\mathcal{K}=\bigcup_{l=1,...,L} B_{\varepsilon}(z_l)$. We use a partition of unity to decompose the model functions without losing the continuity in the state variable $x$. For our construction, we require the partition of unity to sum up to 1 on $K$ while simultaneously being Lipschitz continuous on the entire space $U$. This is why the partition of unity has support in $\mathcal K$ and not only in $K$.
\begin{theorem}
[Discrete approximation of $\eta$ and $N$]\ 

   \label{Theorem:finiteRangeApproximation}
   Assume that $N$ and $\eta$ are linear, i.e. do not depend on $\mu$.
   Denote by $(B_\varepsilon(z_l))_{l=1,...,L}$ an open covering of $K$ and let $(\varphi_l)_{l=1,...,L}$ be the corresponding partition of unity consisting of Lipschitz continuous functions, which exists due to Lemma \ref{Lemma:PartiotionOfUnity}. Let us define the approximations
   \begin{align}
       &\hat\eta(t,x)(\cdot)=\sum_{l=1}^L 
      \left(\int\limits_U \varphi_l(y)\mathrm{d}[\eta(t,x)](y)\right) \delta_{z_l}(\cdot) \text{ and }\\
       &\hat N(t)(\cdot)=\sum_{l=1}^L\left( \int\limits_U \varphi_l(y)\mathrm{d}[N(t)](y)\right)  \delta_{z_l}(\cdot).
   \end{align}
   The functions $\hat N$ and $\hat\eta$ fulfill Assumptions \ref{assumptionsOnModelFunctions} \eqref{assumptionsSupremum}and \eqref{assumptionsOnModelFunctionsTime}. Further, the approximation errors are at most
   \begin{align}
   \label{Eq:Approximationerror}
   &\rho_F\big(\eta(t,x),\hat\eta(t,x)\big)\leq (\varepsilon+r)  \Vert \eta(t,x)\Vert_{BL^*}\text{ and }\\
       &\rho_F\big(N(t),\hat N(t)\big)\leq (\varepsilon+r) \Vert N(t)\Vert_{BL^*},
   \end{align} 
   where $r$ is the cut-off error from \eqref{Eq:tightnessSolution}.
\end{theorem}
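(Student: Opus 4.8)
The plan is to prove each of the two error bounds separately; since $\hat N$ and $\hat\eta$ have the same structure, I will focus on $\hat\eta(t,x)$ and note that the argument for $\hat N(t)$ is verbatim the same with $\eta(t,x)$ replaced by $N(t)$. First I would unfold the definition of $\rho_F$ via \eqref{def:flatNorm}: for any test function $\psi\in BL(U)$ with $\Vert\psi\Vert_{BL}\le 1$, I need to estimate
\begin{align*}
\int_U \psi(y)\,\mathrm d[\eta(t,x)](y) - \int_U\psi(y)\,\mathrm d[\hat\eta(t,x)](y)
= \int_U \psi(y)\,\mathrm d[\eta(t,x)](y) - \sum_{l=1}^L \psi(z_l)\int_U\varphi_l(y)\,\mathrm d[\eta(t,x)](y),
\end{align*}
where I used that $\hat\eta(t,x)$ is a sum of Diracs at the $z_l$ weighted by $\int_U\varphi_l\,\mathrm d[\eta(t,x)]$. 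The key idea is to insert the partition of unity into the first integral: because $(\varphi_l)_{l=1,\dots,L}$ sums to $1$ on $K$ and is supported in $\mathcal K$, I would split the domain as $K$ versus $U\setminus K$. On $U\setminus K$ the tightness bound \eqref{Eq:tightnessSolution}-type estimate on $\eta$ (part of the tightness assumption on $\eta$ in Appendix~\ref{Appendix:Assumptions}) controls $[\eta(t,x)](U\setminus K)\le r$, and since $\Vert\psi\Vert_\infty\le 1$ and each $\varphi_l\in[0,1]$ this region contributes at most $r\Vert\eta(t,x)\Vert_{BL^*}$ (using $\Vert\psi\Vert_\infty\le\Vert\psi\Vert_{BL}\le1$ and that total mass is bounded by the flat norm on the nonnegative cone).

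On $K$, where $\sum_l\varphi_l(y)=1$, I would write
\begin{align*}
\int_K \psi(y)\,\mathrm d[\eta(t,x)](y) - \sum_{l=1}^L\psi(z_l)\int_K\varphi_l(y)\,\mathrm d[\eta(t,x)](y)
= \sum_{l=1}^L \int_K \big(\psi(y)-\psi(z_l)\big)\varphi_l(y)\,\mathrm d[\eta(t,x)](y),
\end{align*}
and here the crucial point is that $\varphi_l$ is supported in $B_\varepsilon(z_l)$, so on the support of the $l$-th summand we have $d(y,z_l)<\varepsilon$, whence $|\psi(y)-\psi(z_l)|\le \mathrm{Lip}(\psi)\,d(y,z_l)\le \varepsilon$ since $\mathrm{Lip}(\psi)\le\Vert\psi\Vert_{BL}\le 1$. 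Summing and using $\sum_l\varphi_l(y)\le 1$ pointwise together with nonnegativity of $\eta(t,x)$, this region contributes at most $\varepsilon\,[\eta(t,x)](K)\le\varepsilon\Vert\eta(t,x)\Vert_{BL^*}$. (One must be slightly careful that there may be a small remaining discrepancy between $\int_K\varphi_l\,\mathrm d[\eta]$ and $\int_U\varphi_l\,\mathrm d[\eta]$ coming from the part of $\mathrm{supp}\,\varphi_l$ lying in $\mathcal K\setminus K$; this difference is again absorbed into the $r$-term via tightness, since $\mathcal K\setminus K\subset U\setminus K$.) Adding the two contributions and taking the supremum over admissible $\psi$ yields $\rho_F(\eta(t,x),\hat\eta(t,x))\le(\varepsilon+r)\Vert\eta(t,x)\Vert_{BL^*}$, and identically for $N$.

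It remains to check that $\hat N$ and $\hat\eta$ inherit the structural assumptions: boundedness of the $BL^*$-norm (Assumption~\ref{assumptionsOnModelFunctions}~\eqref{assumptionsSupremum}) follows because $\Vert\hat\eta(t,x)\Vert_{BL^*}\le\sum_l\int_U\varphi_l\,\mathrm d[\eta(t,x)]\le[\eta(t,x)](\mathcal K)\le\Vert\eta(t,x)\Vert_{BL^*}$, using $0\le\varphi_l$ and $\sum_l\varphi_l\le 1$; and the $\alpha$-Hölder regularity in time (Assumption~\ref{assumptionsOnModelFunctions}~\eqref{assumptionsOnModelFunctionsTime}) is transferred coefficient-wise, since each map $t\mapsto\int_U\varphi_l(y)\,\mathrm d[\eta(t,x)](y)$ is a fixed $BL$-function tested against $\eta(t,x)$, hence $\alpha$-Hölder by the corresponding assumption on $\eta$, and a finite linear combination of $\alpha$-Hölder scalar functions times fixed Diracs is $\alpha$-Hölder in $BL^*$. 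I expect the main obstacle to be the bookkeeping around the collar $\mathcal K\setminus K$: keeping the partition-of-unity identity $\sum_l\varphi_l\equiv1$ only on $K$ while the supports spill into $\mathcal K$ forces one to track that all the "spillover" mass is controlled by the single tightness parameter $r$ and does not accumulate over the $L$ summands — which works precisely because $\sum_l\varphi_l\le1$ everywhere, so the overlap multiplicity never inflates the estimate.
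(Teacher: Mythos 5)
Your proposal is correct and takes essentially the same route as the paper's proof: test against $\psi$ with $\Vert\psi\Vert_{BL}\leq 1$, use $\mathrm{supp}\,\varphi_l\subset B_\varepsilon(z_l)$ and the $1$-Lipschitz bound $\vert\psi(y)-\psi(z_l)\vert\leq\varepsilon$ to gain the factor $\varepsilon$, use $0\leq\sum_{l=1}^L\varphi_l\leq 1$ so the overlaps do not inflate the estimate, and control the mass outside $K$ by the tightness parameter $r$; your direct bound $\Vert\hat\eta(t,x)\Vert_{BL^*}\leq\Vert\eta(t,x)\Vert_{BL^*}$ and the coefficient-wise transfer of the $\alpha$-H\"older regularity in time are also sound. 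Two minor points of divergence: your split of the domain into $K$ and $U\setminus K$, taken literally, produces a constant $2$ in front of the $r$-term (the tail of $\eta$ is hit once through $\psi$ and once through the collar correction), which the paper avoids by instead grouping the error as $\sum_{l}\int_U\varphi_l(y)\big(\psi(z_l)-\psi(y)\big)\mathrm{d}[\eta(t,x)](y)+\int_U\big(1-\sum_l\varphi_l(y)\big)\psi(y)\mathrm{d}[\eta(t,x)](y)$, and you omit the verification of the narrow continuity of $x\mapsto\hat\eta(t,x)$ contained in Assumption \ref{assumptionsOnModelFunctions} \eqref{assumptionsSupremum}, which the paper checks in one line using $\varphi_l\in C^0_b(U)$ and the narrow continuity of $\eta$ in $x$.
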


We postpone the proof to Section \ref{Sec:approximation}.

\begin{remark}
Due to Theorem \ref{Theorem:finiteRangeApproximation} and Theorem \ref{Thm:solution}, there exists a solution to \eqref{eq:mod_pushforward} with model functions $\hat \eta$ and $\hat N $. We denote this solution by $\hat \mu$.
\end{remark}
We are now in the position to propose a numerical scheme to approximate $\hat \mu$.\\

\noindent \textbf{Approximation of the initial measure}\hfill\\
\label{ApproximationInitial}First, we approximate the initial value $\mu(0)\in\mathcal M^+(U)$ by a linear combination of Dirac measures. The approximation only takes into account the mass in $K\subset U$ due to the tightness of the solutions as explained in the beginning of this section.\\

The initial measure $\mu(0)$ is approximated by 
\begin{align*}
\mu_0(\cdot)=\sum_{l=1}^L{\mu}(U_{\varepsilon,l})\delta_{z_l}(\cdot),
\end{align*}
where  $U_{\varepsilon,l}$ refers to the set $\left(B_\varepsilon(z_l)\backslash\left(\bigcup\limits_{i<l}B_\varepsilon(z_i)\right)\right)\cap K$. As the sets are a disjoint cover of $K$, we conserve the total mass of $\mu(0)$ within $K$.\\

\noindent\textbf{Approximation of growth and transport}\\
\label{grothAndTransport}\noindent For the algorithm, we fix a step size $\Delta t$.
Assume that at time $k\Delta t$ the measure is given as a linear combination of $J_k$ Dirac measures with mass $m^j_k$ located at supporting points $x^j_k$, i.e.
\begin{align*}
     \mu_{k}=\sum_{j=1}^{J_k} m^j_k\delta_{x^j_k}.
\end{align*}
We explain how to execute one time step from $k\Delta t$ to $(k+1)\Delta t$ which consists of two stages. At first, we only account for the transport which means that only the locations of the Dirac masses are changed and the corresponding masses remain constant. In the second part, we consider the changes due to $c$, $\hat N$ and $\hat \eta$. Here, the weights of the Dirac measures are adjusted and new support points are created if necessary.

\noindent\textbf{Transport step}\\
\noindent The locations at time $(k+1)\Delta t$ are set to
\begin{align}
    x^{j}_{k+1}=X((k+1)\Delta t,k\Delta t,x^j_k,\mu_k).
    \label{trasnportNumericalScheme]}
\end{align}
Note that $X$ may also be the identity function, i.e. it is possible that $x^j_{k+1}=x^j_k$.

\begin{remark}
    For simplicity, we assume here that the transport function $X$ is known explicitly. In the case that $U\subset \mathbb{R}^d$, the transport function $X$ might be given as the solution to an ordinary differential equation, see Remark \ref{Appendix:Assumptions}, which would need to be solved in this step.
\end{remark}

\textbf{Growth and non-local interaction}\\
Let $\hat \eta$ and $\hat N$ be of the following form:
\begin{align}
    &\hat\eta(t,x,\mu)(\cdot)=\sum\limits_{l=1}^L \beta_l(t,x,\mu)\delta_{z_l}(\cdot)
    \text { and}\\
    & \hat N(t,\mu)(\cdot)= \sum\limits_{l=1}^L n_l(t,\mu) \delta_{z_l}(\cdot),
\end{align}
which means that new masses are only created in $\{z_l\ \vert l=1,...,L\}$. This is important for the algorithm as it means that in each time step only a finite number of new support points is created. Note that according to Theorem \ref{Theorem:finiteRangeApproximation} any linear $\eta$  and $N$ can be approximated in this discrete form. For nonlinear $\eta$ and $N$, such an approximation is not available yet without further assumptions on the regularity and compromises  to the order of convergence, see Remark \ref{Remark:issuesNonLinear}, so that in the nonlinear case we assume that $\eta$ and $N$ are already of the above form. For ease of notation, we will still write $\hat\eta$ and $\hat N$ in this case.

Due to the form of the model functions $\hat\eta$ and $\hat N$,
the set of points with possibly non-zero mass at time $(k+1)\Delta t$ is given by
\begin{align}
\label{stesGrowth}
    \{x_{{k+1}}^j\vert j=1,...,J_k\}
    \cup
    \{z_l\ \vert {l=1,...,L}\},
\end{align}
as the only points with non-zero mass at time $(k+1)\Delta t $ fall into one of the following categories
\begin{enumerate}
    \item points with mass at time  $k\Delta t$ with locations modified by the transport function and weights modified by the model function $c$,
    \item points $\{z_l\ \vert l=1,...,L\}$ receiving mass from the influx function $\hat N$,
    \item points $\{z_l\ \vert l=1,...,L\}$ receiving mass via the model function $\hat\eta$ from a point which had mass at time $k\Delta t$.   
\end{enumerate}
We renumber the points to obtain
\begin{align}
   \{x_{{k+1}}^j\vert j=1,...,J_k\}
    \cup
   \{z_l\ \vert l=1,...,L\}=\{\xi^1_{k+1},...,\xi^{J_{k+1}}_{k+1}\},
\end{align}
where $J_{k+1}\leq J_k+L$.
For each point 
$\xi_{k+1}^j$,
we solve an explicit Euler equation for the evolution of the mass $m_\xi$:
\begin{align}
\label{MassEvolution}
    \frac{m^j_{k+1}-m^j_k}{\Delta t}
    =&
    c(k\Delta t,x^{\bar i}_{k+1},{\mu}_k)m^{\bar i}_k \mathbbm{1}_{\xi_{k+1}^j=x^{\bar i}_{k+1}}\\
    &+\bigg(\bigg(\sum_{i=1}^{J_k}\beta_l(k\Delta t,x^i_{k+1},\mu_k)m^i_{k}\bigg)
    + n_l(k\Delta t,\mu_k)\bigg)\mathbbm{1}_{\xi_{k+1}^j=z_l},
\end{align}
where we set 
\begin{align}
m_j^k=
    \begin{cases}
    0 &\quad \text{ if } \xi_{k+1}^j\notin \{x_{{k+1}}^j\vert j=1,...,J_k\},\\
    m^{\bar i}_k & \quad \text{ if }\xi_{k+1}^j=x^{\bar i}_{k+1}  \text{ for some } \bar i\in \{1,...,J_k\}.
    \end{cases}
\end{align}
Overall, we obtain the approximation
\begin{align*}
    \mu_{k+1}=\sum_{j=1}^{J_{k+1}} m^j_{k+1} \delta_{\xi^j_{k+1}}. 
\end{align*}

The above splitting approach leads to the following convergence result.

\begin{theorem}[Convergence of the numerical scheme] \ 
\label{Thm:ConvergenceNumerical}

    The overall error of the numerical scheme is
    \begin{align}
        \rho_F\big(\mu_k,\mu(k\Delta t)\big)
        \leq C\big( \varepsilon+r+\rho_F\big(\mu_0,\mu(0)\big)
        + [\Delta t+(\Delta t)^\alpha]\big),
    \end{align}
where $\Delta t$ is the size of the time steps and $r$ is a cutoff parameter of the initial values and model functions, see \eqref{Eq:tightnessSolution} and \eqref{Eq:Approximationerror}. The parameter $\varepsilon$ represents the  density of the grid points (see  Section \ref{Paragraph:Dense_subset}) and $\alpha$ refers to the assumed continuity of the model functions w.r.t. time (see Assumptions \ref{assumptionsOnModelFunctions}\eqref{assumptionsOnModelFunctionsTime}). 

\end{theorem}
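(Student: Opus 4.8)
The plan is to bound the global error by the triangle inequality through three intermediate objects and to estimate each gap separately. Let $\hat\mu$ be the solution of \eqref{eq:mod_pushforward} with $\eta,N$ replaced by the discrete approximations $\hat\eta,\hat N$ and with initial datum $\mu(0)$ (in the nonlinear case $\hat\eta,\hat N$ are the assumed Dirac-sum model functions and $\hat\mu=\mu$). Write $\hat S_{t,s}$ for the exact evolution operator of this discretized model; let $\hat\Theta_{\Delta t}$ and $\hat\Gamma_{\Delta t}$ be its \emph{exact} transport-only and \emph{exact} growth/interaction-only one-step maps, and put $\hat\mu^{\mathrm s}_k:=(\hat\Gamma_{\Delta t}\circ\hat\Theta_{\Delta t})^k(\mu(0))$. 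Then
\[
  \rho_F\big(\mu_k,\mu(k\Delta t)\big)\le \rho_F\big(\hat\mu(k\Delta t),\mu(k\Delta t)\big)+\rho_F\big(\hat\mu^{\mathrm s}_k,\hat\mu(k\Delta t)\big)+\rho_F\big(\mu_k,\hat\mu^{\mathrm s}_k\big)=:(\mathrm I)+(\mathrm{II})+(\mathrm{III}).
\]
Term $(\mathrm I)$ is immediate: by the continuous dependence of solutions of \eqref{eq:mod_pushforward} on the model functions (the stability estimates recalled through Theorem \ref{Thm:solution} and Appendix \ref{Sec:SolutionsandProperties}) combined with the approximation bounds \eqref{Eq:Approximationerror} of Theorem \ref{Theorem:finiteRangeApproximation}, one gets $(\mathrm I)\le C(\varepsilon+r)$.

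\textbf{Term $(\mathrm{III})$ (time discretization).} I would handle this by a Lady Windermere (telescoping) argument. First record uniform a priori bounds on $\sup_k\Vert\mu_k\Vert_{BL^*}$ (Gronwall, as $c,\hat N,\hat\eta$ have bounded norms) and on the tightness of the iterates, so that every model constant applies uniformly along the scheme; note also that each exact sub-step $\hat\Theta_{\Delta t},\hat\Gamma_{\Delta t}$ is Lipschitz in the flat metric with constant $\le e^{C\Delta t}$, so the composition of up to $T/\Delta t$ of them has constant $\le e^{LT}$. Inserting the exact-split evolution of each numerical iterate yields
\[
  \rho_F\big(\mu_k,\hat\mu^{\mathrm s}_k\big)\le e^{LT}\rho_F\big(\mu_0,\mu(0)\big)+e^{LT}\sum_{i=0}^{k-1}\rho_F\Big(G_i\big(T_i(\mu_i)\big),\ \hat\Gamma_{\Delta t}\big(\hat\Theta_{\Delta t}(\mu_i)\big)\Big),
\]
where $T_i,G_i$ are the transport and growth substeps of the scheme. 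By the Lipschitz continuity of $G_i$, the one-step error splits into the transport error $\rho_F\big(T_i\mu_i,\hat\Theta_{\Delta t}\mu_i\big)$ and the growth error $\rho_F\big(G_i\nu,\hat\Gamma_{\Delta t}\nu\big)$. The first comes solely from freezing the measure argument of $X$ at $\mu_i$ over the step, and since that argument separates from the true trajectory at rate $O(1)$ starting from agreement at $i\Delta t$, the two flow maps differ by $O((\Delta t)^{1+\alpha})$ (the Hölder exponent entering through the time-regularity of $X$), improving to $O((\Delta t)^2)$ when $X$ is Lipschitz in time. The second is the explicit Euler local error for the mass ODE $\dot m=c\,m+(\text{influx})$ in \eqref{MassEvolution}, which is $O((\Delta t)^2+(\Delta t)^{1+\alpha})$ because the right-hand side is $C^1$ in time (using that $c$ and the coefficients $\int\varphi_l\,\mathrm d\eta,\int\varphi_l\,\mathrm dN$ are $C^1$ in time) and Lipschitz in the measure argument, which is frozen and only $\alpha$-Hölder-close to the true one over the step. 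Summing over $k\le T/\Delta t$ steps gives $(\mathrm{III})\le C\big(\rho_F(\mu_0,\mu(0))+\Delta t+(\Delta t)^\alpha\big)$.

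\textbf{Term $(\mathrm{II})$ (operator splitting) --- the main obstacle.} This is the heart of the proof: showing that Lie--Trotter splitting of the nonautonomous, measure-valued, non-mass-conserving evolution \eqref{eq:mod_pushforward} converges in the flat metric with rate $(\Delta t)^\alpha$. I would cast it in the semigroup-splitting framework underlying Section \ref{Sec:convergence}, which requires two ingredients. (Stability) Uniform Lipschitz bounds in the flat metric for $\hat\Theta_{\Delta t},\hat\Gamma_{\Delta t}$ and for $\hat S_{t,s}$, which follow from the assumptions of Appendix \ref{Appendix:Assumptions}. (Consistency) The local splitting estimate
\[
  \rho_F\Big(\hat S_{(i+1)\Delta t,\,i\Delta t}(\nu),\ \hat\Gamma_{\Delta t}\big(\hat\Theta_{\Delta t}(\nu)\big)\Big)\le C\,(\Delta t)^{1+\alpha}
\]
uniformly over $\nu$ in the relevant tight, mass-bounded class. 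Proving the consistency estimate is the hard part; I would do it by expanding both sides via the implicit push-forward representation \eqref{eq:mod_pushforward} over the single interval and controlling the mismatch between the composed flow maps $X$ (which in the split scheme ignore the growth/influx accumulated over the step), between the exponential factors $e^{\int c}$, and between the $\eta,N$-contributions, the $\alpha$-Hölder moduli in time producing the surplus $(\Delta t)^\alpha$ and the boundedness/Lipschitz hypotheses keeping all constants uniform, then closing the single-step inequality by Gronwall. Given stability and consistency, the abstract splitting argument upgrades the local bound: summing the $C(\Delta t)^{1+\alpha}$ local errors over $k\le T/\Delta t$ steps yields $(\mathrm{II})\le C\,T\,(\Delta t)^\alpha$. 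Finally, combining $(\mathrm I)$--$(\mathrm{III})$ and collecting all constants (each of the form $e^{LT}$ times a model constant) gives $\rho_F(\mu_k,\mu(k\Delta t))\le C\big(\varepsilon+r+\rho_F(\mu_0,\mu(0))+\Delta t+(\Delta t)^\alpha\big)$, as claimed.
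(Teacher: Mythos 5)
Your plan is sound and reaches the stated bound, but it is organized differently from the paper's proof, so a comparison is in order. You decompose the error into (i) exact model versus model with $\hat\eta,\hat N$, (ii) that discretized model versus an exact Lie--Trotter splitting of its \emph{unfrozen}, nonautonomous dynamics, and (iii) that exact split scheme versus the actual algorithm (frozen coefficients plus explicit Euler), closing each piece by stability and a Lady Windermere summation. The paper instead freezes both the time and the measure arguments of the model functions at the start of every step, so that the two sub-problems become \emph{linear autonomous} semigroups $\mathcal S^1,\mathcal S^2$; it proves exponential Lipschitz continuity and the commutator bound $\rho_F(\mathcal S^1_t\mathcal S^2_t\kappa,\mathcal S^2_t\mathcal S^1_t\kappa)\le Ct^2$, invokes the abstract splitting theorem of \cite{MR2050900} (Theorem \ref{operatorSplittingConverges}) to identify the split limit with the frozen-model solution and to get the $Ch^2$ one-step bound, and only then accounts for the freezing of $(t,\mu)$ through the continuous-dependence estimates of Theorem \ref{Thm:solution} (the terms $I_1$--$I_4$ in Lemma \ref{ErrorAlgorithm}), iterating with Lemma \ref{iterationLemma}; the Euler and transport-evaluation errors are added in Lemma \ref{errorBetweenRecNumerical}, essentially as in your term (III), and your term (I) coincides with the paper's use of Theorem \ref{Theorem:finiteRangeApproximation} plus continuous dependence. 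What your route buys is that the autonomous-semigroup machinery is never needed: uniform stability of the sub-flows plus a one-step consistency estimate (which in your term (II) is in fact $O((\Delta t)^2)$, since no coefficients are frozen there and the $\alpha$-H\"older regularity only enters where you freeze them) already closes the argument. What the paper's route buys is that the genuinely technical local estimate is carried out only for linear, autonomous coefficients, where the push-forward representation \eqref{eq:Weakform} simplifies markedly. Two cautions: the consistency estimate you flag as the hard part is indeed where almost all of the paper's work sits (the commutator estimate and the $Ch^2$ comparison showing $\Sigma_t$ solves the model), so your proposal stands or falls with carrying out those expansions; and the $(\Delta t)^{1+\alpha}$ you attribute to the time-regularity of $X$ actually stems from the H\"older regularity of $c,\eta,N$ and the freezing of the measure argument, since $X$ is assumed to satisfy $\omega_{X,R}(t)\le Ct$ (Assumption \ref{assumptionsOnModelFunctions}\eqref{assumptionsXLipschitzTX}); this does not affect the final rate. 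Your treatment of the nonlinear case (taking $\hat\eta=\eta$, $\hat N=N$ of finite range) matches the paper's, and correctly relies on the uniform measure-Lipschitz continuity that Remark \ref{Remark:issuesNonLinear} identifies as the obstruction to approximating nonlinear $\eta,N$.
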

\begin{remark}
    Throughout this paper, all of the constants that we estimate may depend continuously  on $T$, $\mu_0$ and on the norms of the model functions $c$, $N$, $\eta$ and $X$ and their approximations $\hat N$ and $\hat\eta$. 
\end{remark}

The error in Theorem \ref{Thm:ConvergenceNumerical} can be controlled by decreasing the parameters $r$ , $\Delta t$ and $\varepsilon$, which improves the accuracy of the algorithm.
 The distance between the support points of the approximation is determined by $\varepsilon$. A smaller $\varepsilon$ requires more support points, also increasing the  computational cost. On the other hand, it also decreases the  initial error given by $\rho_F\big(\mu(0),\mu_0\big)$ (see Lemma \ref{IntialError}) and the error in the approximation of $N$ and $\eta$ (see Theorem \ref{Theorem:finiteRangeApproximation}).\\
 Similarly, decreasing $r$ implies that the set on which we approximate the measures grows (see \eqref{Eq:tightnessSolution}) and more points are needed to cover the set. Lastly, decreasing $\Delta t$ leads to higher computational costs as the number of steps needed to reach a set end time increases.\\
 The error of order $\Delta t$ comes from the splitting of the process into the transport and the growth step. The error of order $(\Delta t)^\alpha$ arises from the freezing of the model functions at the beginning of each time step.

\begin{remark}
 The number of Dirac measures in the approximation grows at most linearly in the number of time steps. In each step, at most $L$ new masses are created such that the number of total support points is bounded from above by $\frac{T}{\Delta t}L $ plus the number of points in the initial approximation. Hence, the number of equations that are solved in each time step also grows linearly with the number of time steps. \\

\end{remark}

\section{Bayesian Inverse Method}
\label{Bayes}
After establishing a numerical scheme for solving the structured population model on an arbitrary Polish metric space, we turn to inverse problems and in particular to the Bayesian inverse method. \\
For the inverse problem, we want to determine the parameters governing the process that gives rise to a certain solution. The potential difficulties are that several parameters may produce the same solution or that the dependence of the parameters on the solution may not be continuous \cite{Stuart_2010}.\\
The Bayesian inverse method addresses these issues by establishing a posterior distribution on the parameter space, which quantifies how likely it is that the model was parameterized by a certain parameter $\theta$ given observed data $Y\!$.
This is done by linking the posterior distribution $\pi(\theta\vert Y)$ to a prior distribution $\pi(\theta)$ via a likelihood function $\ell(Y\vert\theta)$. The prior quantifies knowledge about the parameter space before any observations were made. The likelihood describes the possibility of observing data $Y$ if the model is parameterized by $\theta$ \cite{Stuart_2010}.\\
In this section, we consider the case where we assume that the model \eqref{eq:mod_pushforward} can be parameterized by finitely many real-valued parameters which are located in a compact set. \\
We show that the numerical scheme proposed in Section \ref{Sec:NumericalScheme} can be used to calculate a posterior density that is close to the exact posterior density in the flat metric.  
To this end, we proceed as in \cite{szymanska_bayesian_2021}.\\

\subsection{The data model and the posterior}\hfill\\
Let 
\begin{align}
\label{Eq:solutionOperator}
    \mathcal{G}:\mathcal{M}^+(U)\times \Theta \rightarrow C^0([0,T],\mathcal{M}^+(U))
\end{align} 
be the solution operator to the model given by \eqref{eq:mod_pushforward} with model functions parameterized by parameters $\theta$ in a compact set $ \Theta\subset\mathbb{R}^d$ such that the Assumptions \ref{assumptionsOnModelFunctions} are satisfied. 
The operator $\mathcal{G}$ maps an initial value $\mu(0)$ and a vector of parameters $\theta\in\Theta$ to the corresponding solution $t\mapsto\mu(t)$ of \eqref{eq:mod_pushforward}.\\

Denote by $Y$ the observable data. We assume that it is measured up to a random measurement error $Z$, which is distributed according to a known probability density $f$ and that the measured data is given at a certain number of deterministic time points $t_m$ for $m=1,...,M$. Let the observation $y_{i,m}$ at time $t_m$ be of the form   
\begin{align}
    y_{i,m}=\int_U g_i(x) \mathrm{d}[\mathcal{G}(\theta)(t_m)](x)+Z_{i,m},
\end{align} 
where  $g_i\in BL(U)$ for $i=1,...,I$ are known functions. This means that $y_{i,m}$ is the integral of $\psi_i$ with respect to the measure $\mathcal G (\theta)(t_m)$ up to the error $Z_{i,m}$. The assumed form of the data is chosen such that we can link the data and the model continuously.\\
We assume that the density $f$ of $Z$ is $\gamma$-Hölder continuous for some $\gamma\in(0,1]$, i.e. we assume that $f\in C^{0,\gamma}(\mathbb{R}^{I\cdot M},\mathbb{R})$
\begin{align}
\label{rhoHoelder}
    \vert f(X_1)-f(X_2)\vert\leq \Vert f\Vert_{C^{0,\gamma}} \Vert X_1-X_2\Vert_{\mathbb{R}^{I\cdot M}}^\gamma.
\end{align}
The commonly used multivariate normal distribution with symmetric positive definite covariance matrix is smooth with bounded derivatives and hence 1-Hölder continuous.\\ 
In order to link the model and the data, let 
\begin{align}
    \mathcal{F}:C^0([0,T],\mathcal{M}^+(U))\rightarrow \mathbb{R}^{I\cdot M}
\end{align}

 be an observation operator which connects the solution $\mathcal{G}(\theta)$ of \eqref{eq:mod_pushforward} to the observations: 
\begin{equation}
\label{eq:Data}
    Y=\mathcal{F}(\mathcal{G}(\theta))+Z
    .
\end{equation}
More precisely, for $\mu(\cdot)\in C^0([0,T],\mathcal{M}^+(U))$, the operator $\mathcal{F}$ is given by
\begin{align}
\label{Eq:defObservationOperator}
    \mathcal{F}(\mu(\cdot))_{i,m}=\int\limits_U g_i(x)\mathrm{d}[\mu({t_m})](x).
\end{align}
This observation operator is used to define our prior on a subset of  $\mathbb{R}^{I\cdot M}$. This avoids the technical difficulties of a prior measure on a space of functions with values in $\mathcal{M}^+(U)$.

\begin{remark}
The exact form of $\mathcal{F}$ is motivated by the fact that for $\psi\in BL(U)$ evaluations of the form 
\begin{align}
\label{Eq:TypeOfData}
    \int\limits_U \psi(x)\mathrm{d} [\mathcal{G}(\theta)(t)](x)
\end{align}
 are what we can explicitly calculate for solutions $\mathcal G(\theta)$ to \eqref{eq:mod_pushforward}, see \eqref{eq:Weakform}.\\
More intuitively, one would maybe consider data that is of the form
\begin{align}
\label{eq:dataSet}
    \tilde y_{i,m} =[\mathcal{G}(\theta)({t_m})](A_i)+Z_{i,m}
\end{align}
for some measurable set $A_i\subset U$. However, as the indicator functions of sets are not Lipschitz continuous, such an evaluation operator could not be Lipschitz continuous w.r.t. the flat metric.  Nevertheless, we can approximate the indicator function of an open set by a Lipschitz function up to a pre-determined accuracy, such that we can consider data of the type \eqref{eq:dataSet} at least approximately.\\
Further, the observation operator defined by \eqref{Eq:defObservationOperator} can be used to describe the first moment of the measure $\mathcal G(\theta)(t_m)$ as the function
\begin{align}
    x\mapsto \mathrm{d}(x_0,x)
\end{align}
is Lipschitz continuous for a fixed value $x_0$ \cite[Appendix C]{düll_gwiazda_marciniak-czochra_skrzeczkowski_2021}.\\

\end{remark}

\begin{lemma}
\label{ObsContinuous}
     The observation operator $\mathcal{F}$ is Lipschitz continuous.
\end{lemma}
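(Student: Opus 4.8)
The plan is to show directly that $\mathcal{F}$ satisfies a Lipschitz estimate in the norm on $C^0([0,T],\mathcal{M}^+(U))$ induced by the flat metric, i.e. $d_\infty(\mu(\cdot),\nu(\cdot)) = \sup_{t\in[0,T]}\rho_F(\mu(t),\nu(t))$. Fix two curves $\mu(\cdot),\nu(\cdot)\in C^0([0,T],\mathcal{M}^+(U))$. For each component index $(i,m)$ we estimate
\begin{align*}
  \big|\mathcal{F}(\mu(\cdot))_{i,m}-\mathcal{F}(\nu(\cdot))_{i,m}\big|
  = \left| \int_U g_i(x)\,\mathrm{d}[\mu(t_m)](x) - \int_U g_i(x)\,\mathrm{d}[\nu(t_m)](x)\right|.
\end{align*}

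The key step is to recognize this as an evaluation of the flat norm against the test function $g_i$. Since $g_i\in BL(U)$, the function $\psi := g_i/\Vert g_i\Vert_{BL}$ (assuming $g_i\neq 0$; otherwise the bound is trivial) satisfies $\Vert\psi\Vert_{BL}\leq 1$, so by the definition \eqref{def:flatNorm} of the flat norm applied to the signed measure $\mu(t_m)-\nu(t_m)$ — here one must note that the flat norm extends to the difference of two elements of $\mathcal{M}^+(U)$ and that $\rho_F$ is precisely the metric induced by $\Vert\cdot\Vert_{BL^*}$ — we get
\begin{align*}
  \big|\mathcal{F}(\mu(\cdot))_{i,m}-\mathcal{F}(\nu(\cdot))_{i,m}\big|
  \leq \Vert g_i\Vert_{BL}\,\Vert \mu(t_m)-\nu(t_m)\Vert_{BL^*}
  = \Vert g_i\Vert_{BL}\,\rho_F\big(\mu(t_m),\nu(t_m)\big)
  \leq \Vert g_i\Vert_{BL}\, d_\infty(\mu(\cdot),\nu(\cdot)).
\end{align*}
Summing (or taking the Euclidean norm) over the $I\cdot M$ components then yields
\begin{align*}
  \Vert \mathcal{F}(\mu(\cdot))-\mathcal{F}(\nu(\cdot))\Vert_{\mathbb{R}^{I\cdot M}}
  \leq \Big(\sum_{i=1}^{I}\sum_{m=1}^{M}\Vert g_i\Vert_{BL}^2\Big)^{1/2} d_\infty(\mu(\cdot),\nu(\cdot))
  = \sqrt{M}\,\Big(\sum_{i=1}^{I}\Vert g_i\Vert_{BL}^2\Big)^{1/2}\, d_\infty(\mu(\cdot),\nu(\cdot)),
\end{align*}
which is the claimed Lipschitz continuity with explicit constant $L_{\mathcal{F}} = \sqrt{M}\,(\sum_i \Vert g_i\Vert_{BL}^2)^{1/2}$.

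There is no serious obstacle here; the only point requiring a word of care is the passage from the supremum in \eqref{def:flatNorm} — which as written is over a single test function — to a bound on the difference of integrals against $g_i$, which requires noting that $\Vert\cdot\Vert_{BL^*}$ is genuinely a norm on the span of $\mathcal{M}^+(U)$ (so it applies to signed measures $\mu-\nu$) and that $\rho_F(\mu,\nu)=\Vert\mu-\nu\Vert_{BL^*}$; both facts are part of the standing setup recalled from \cite{düll_gwiazda_marciniak-czochra_skrzeczkowski_2021}. One could alternatively avoid signed measures entirely by splitting $g_i = g_i^+ - g_i^-$ or by a direct duality argument, but invoking the flat-norm duality is cleanest. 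I would also remark explicitly that the Lipschitz constant is independent of the parameters $\theta$, since this uniformity is what will be used downstream in the continuity estimates for the posterior.
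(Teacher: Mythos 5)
Your proof is correct and follows essentially the same route as the paper: estimate each component $\int_U g_i\,\mathrm{d}[\mu(t_m)-\nu(t_m)]$ by flat-norm duality (rescaling the test function $g_i$, which is exactly the scaling property of $\Vert\cdot\Vert_{BL^*}$ the paper invokes), sum over the finitely many components, and bound by $\sup_{t\in[0,T]}\rho_F(\mu(t),\nu(t))$. Your explicit constant with the factor $\Vert g_i\Vert_{BL}$ is in fact the correct scaling (the paper's displayed $\tfrac{1}{\Vert g_i\Vert_{BL}}$ appears to be a typo), so nothing further is needed.
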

\begin{proof}
   This follows by a direct calculation using the scaling properties of the flat metric (Lemma \ref{scalingFlatNorm}).
\end{proof}
We introduce the likelihood function $\ell$ by rearranging \eqref{Eq:likelyhood} and using that the measurement error is distributed according to the density $f$ such that
\begin{align}
\label{Eq:likelyhood}
\ell(Y\vert \theta):=f\big(Y-\mathcal{F}\circ\mathcal{G}(\theta)\big).
\end{align}
 The likelihood describes the possibility of observing data $Y$ if the model is parameterized by $\theta$.\\
\noindent
According to Bayes' theorem, the probability density of the posterior is given by 
\begin{equation}
    \pi (\theta\vert Y)=\frac{\ell (Y\vert\theta)\pi (\theta) }{\int\limits_\Theta\ell(Y\vert\theta)\pi(\theta)\mathrm{d}\theta},
\end{equation}
where $\pi$ is the prior density on the parameter space $\Theta$. Choosing which probability density to use for the prior is part of the modeling. 
As $\Theta$ is compact, we can for example choose the density of the uniform distribution as the prior.
\begin{lemma}
The posterior density depends continuously on the data:
Given two observations $Y_1,Y_2\in \mathbb{R}^{I\cdot M}$, we can estimate
\begin{align*}
    \rho_F\big(\pi(\theta\vert Y_1),\pi(\theta\vert Y_2)\big)
    \leq C\cdot \Vert f\Vert_{C^{0,\gamma}} \Vert Y_1-Y_2\Vert_{\mathbb{R}^{I\cdot M}}^\gamma \frac{1}{\int\limits_{\Theta} \ell(Y_1\vert\theta)\pi(\theta)\mathrm{d}\theta}.
\end{align*}
\end{lemma}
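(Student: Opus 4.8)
The plan is to reduce the flat distance between the two posterior densities, viewed as probability measures on the compact set $\Theta\subset\mathbb R^d$, to the $L^1$-distance of their Lebesgue densities, and then to run the standard perturbation estimate for Bayesian posteriors. Write $Z(Y):=\int_\Theta\ell(Y\vert\theta)\pi(\theta)\,\mathrm d\theta$ for the normalising constant (the evidence). First I would observe that any test function $\psi$ with $\Vert\psi\Vert_{BL}\le1$ satisfies $\Vert\psi\Vert_\infty\le1$, so that
\begin{align*}
\int_\Theta\psi(\theta)\big(\pi(\theta\vert Y_1)-\pi(\theta\vert Y_2)\big)\,\mathrm d\theta\le\int_\Theta\big\vert\pi(\theta\vert Y_1)-\pi(\theta\vert Y_2)\big\vert\,\mathrm d\theta,
\end{align*}
and taking the supremum over such $\psi$ yields $\rho_F\big(\pi(\cdot\vert Y_1),\pi(\cdot\vert Y_2)\big)\le\int_\Theta\vert\pi(\theta\vert Y_1)-\pi(\theta\vert Y_2)\vert\,\mathrm d\theta$.

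Next I would split the integrand by inserting the mixed term $\ell(Y_2\vert\theta)\pi(\theta)/Z(Y_1)$:
\begin{align*}
\pi(\theta\vert Y_1)-\pi(\theta\vert Y_2)=\frac{\pi(\theta)}{Z(Y_1)}\big(\ell(Y_1\vert\theta)-\ell(Y_2\vert\theta)\big)+\ell(Y_2\vert\theta)\pi(\theta)\Big(\frac1{Z(Y_1)}-\frac1{Z(Y_2)}\Big).
\end{align*}
Integrating the absolute value over $\Theta$ and using $\int_\Theta\ell(Y_2\vert\theta)\pi(\theta)\,\mathrm d\theta=Z(Y_2)$, the second contribution equals $\vert Z(Y_1)-Z(Y_2)\vert/Z(Y_1)$, so altogether
\begin{align*}
\rho_F\big(\pi(\cdot\vert Y_1),\pi(\cdot\vert Y_2)\big)\le\frac1{Z(Y_1)}\left(\int_\Theta\pi(\theta)\big\vert\ell(Y_1\vert\theta)-\ell(Y_2\vert\theta)\big\vert\,\mathrm d\theta+\big\vert Z(Y_1)-Z(Y_2)\big\vert\right).
\end{align*}

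Both remaining quantities are controlled by the Hölder bound on $f$. Since $\ell(Y\vert\theta)=f\big(Y-\mathcal F\circ\mathcal G(\theta)\big)$, the shift $\mathcal F\circ\mathcal G(\theta)$ cancels in the difference of arguments, so by \eqref{rhoHoelder} one has $\vert\ell(Y_1\vert\theta)-\ell(Y_2\vert\theta)\vert\le\Vert f\Vert_{C^{0,\gamma}}\,\Vert Y_1-Y_2\Vert_{\mathbb R^{I\cdot M}}^\gamma$ uniformly in $\theta$. Since $\pi$ is a probability density on $\Theta$, this bounds the first quantity by $\Vert f\Vert_{C^{0,\gamma}}\Vert Y_1-Y_2\Vert^\gamma$, and the same estimate under the integral defining $Z$ bounds $\vert Z(Y_1)-Z(Y_2)\vert$ by the same expression. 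Combining the two gives the claimed inequality with $C=2$, which may be absorbed into the generic constant.

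The computations here are routine; the only points needing care are, first, the reduction in the opening step — justifying that the flat distance on $\mathcal M^+(\Theta)$ is dominated by the $L^1$-distance of densities, which uses that $\Theta$ is a Polish metric space (true, as a compact subset of $\mathbb R^d$) and that $\Vert\cdot\Vert_{BL}\le1$ forces $\Vert\cdot\Vert_\infty\le1$ — and, second, tracking the normalisation so that the factor $1/Z(Y_1)$ (rather than $1/Z(Y_2)$ or a symmetric combination) appears on the right-hand side; this is exactly what the insertion of the mixed term achieves. One also implicitly needs $Z(Y_1)>0$, which holds whenever $f$ is strictly positive (e.g.\ Gaussian) and is otherwise trivial, since then the right-hand side is infinite.
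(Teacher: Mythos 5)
Your proof is correct and follows essentially the same route as the paper: the same decomposition of $\pi(\theta\vert Y_1)-\pi(\theta\vert Y_2)$ into the likelihood-difference term and the normalisation-difference term, the same $\gamma$-H\"older estimate on $\ell$ coming from the cancellation of $\mathcal F\circ\mathcal G(\theta)$, and the same final constant $C=2$ divided by $\int_\Theta\ell(Y_1\vert\theta)\pi(\theta)\,\mathrm d\theta$. Your preliminary reduction of the flat distance to the $L^1$-distance of the densities is only a cosmetic repackaging, since the paper's test-function argument likewise uses nothing beyond $\Vert\psi\Vert_\infty\leq 1$.
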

\begin{proof}
This follows from a direct calculation using the regularity of $f$.
\end{proof}

\subsection{Numerical approximation of the posterior}\hfill \\
In practice, the exact solution operator $\mathcal G$ is unknown, so that we are restricted to work with its approximation $\mathcal{G}_{\Delta t}$ as obtained by the algorithm proposed  in Section \ref{Sec:NumericalScheme}. This numerical approximation induces a bias in the posterior distribution, which we estimate in this subsection.\\
As established in Theorem \ref{Thm:ConvergenceNumerical}, the numerical approximation $\mathcal{G}_{\Delta t}$ of $\mathcal{G}$ fulfills

 \begin{align}
 \label{consistencyNumerical}
        &\rho_F\big(\mathcal{G}_{\Delta t}(\theta)(t),\mathcal{G}(\theta)(t)\big)
    \leq  C\big( \varepsilon+r+\rho_F\big(\mu_0,\mu(0)\big) 
   + \left[ (\Delta t)+(\Delta t)^{\alpha}\right]\big)
    \end{align}
  
    for all $t\in[0,T]$. Here, $\mu(0)$ refers to the exact initial value and $\mu_0$ refers to its approximation by a linear combination of Dirac measures.\\
    The data $Y$ is still determined by the exact model. For a parameter $\theta\in\Theta$ we aim to find the probability that
    \begin{align}
        Y=\mathcal{F}(\mathcal{G}_{\Delta t}(\theta))+Z
    \end{align}
    for predetermined accuracy parameters $\Delta t, \varepsilon, r$ of the numerical scheme.\\
    
We define the functions $\pi_{\Delta t}(\theta\vert Y)$ and $\ell_{\Delta t}(Y\vert \theta)$ by the same method as $\pi(\theta\vert Y)$ and $\ell(Y\vert \theta)$ using $\mathcal{G}_{\Delta t}$ instead of $\mathcal{G}$ and aim to compare the two posterior measures with densities $\pi (\theta\vert Y)$ and $\pi_{\Delta t} (\theta\vert Y)$.
\begin{theorem}[Stability of the posterior measure w.r.t. numerical approximation]\ \\
The difference between the exact posterior and the posterior obtained using the numerical approximation is of the same order of accuracy as the numerical approximation:

    \begin{align}
      \rho_F\big(\pi(\theta\vert Y),\pi_{\Delta t}(\theta\vert Y)\big) 
      \leq C\big(\varepsilon+r+\rho_F\big(\mu_0,\mu(0)\big) 
   +\left[ (\Delta t)+(\Delta t)^{\alpha}\right]) \big)^\gamma \frac{1}{\int\limits_{\Theta} \ell(Y_1\vert\theta)\pi(\theta)\mathrm{d}\theta}.
    \end{align}

\end{theorem}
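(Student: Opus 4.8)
The strategy is to mirror, almost verbatim, the proof of the preceding lemma on the continuity of the posterior density with respect to the data, with the perturbation of $Y$ replaced by the perturbation of the forward operator $\mathcal G \mapsto \mathcal G_{\Delta t}$. The only genuinely new ingredient is a uniform-in-$\theta$ bound on the difference of the two likelihoods, after which the algebra is identical.

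First I would estimate $\vert \ell(Y\vert\theta)-\ell_{\Delta t}(Y\vert\theta)\vert$ uniformly over $\theta\in\Theta$. Since $\ell(Y\vert\theta)=f(Y-\mathcal F\circ\mathcal G(\theta))$ and $\ell_{\Delta t}(Y\vert\theta)=f(Y-\mathcal F\circ\mathcal G_{\Delta t}(\theta))$, the $\gamma$-Hölder bound \eqref{rhoHoelder} on $f$ together with the Lipschitz continuity of $\mathcal F$ (Lemma \ref{ObsContinuous}) gives
\begin{align*}
\vert \ell(Y\vert\theta)-\ell_{\Delta t}(Y\vert\theta)\vert
&\le \Vert f\Vert_{C^{0,\gamma}}\,\Vert \mathcal F\circ\mathcal G(\theta)-\mathcal F\circ\mathcal G_{\Delta t}(\theta)\Vert_{\mathbb R^{I\cdot M}}^{\gamma}\\
&\le C\,\Vert f\Vert_{C^{0,\gamma}}\Big(\sup_{t\in[0,T]}\rho_F\big(\mathcal G(\theta)(t),\mathcal G_{\Delta t}(\theta)(t)\big)\Big)^{\gamma}.
\end{align*}
Then I would apply the convergence estimate \eqref{consistencyNumerical} of Theorem \ref{Thm:ConvergenceNumerical}; because $\Theta$ is compact and the $\theta$-parameterized model functions satisfy Assumptions \ref{assumptionsOnModelFunctions} with uniformly bounded norms, the constant in that estimate — which depends only on $T$, $\mu_0$ and the model-function norms — can be chosen independent of $\theta$, so that with $E:=\varepsilon+r+\rho_F(\mu_0,\mu(0))+[\Delta t+(\Delta t)^{\alpha}]$ we obtain $\sup_{\theta\in\Theta}\vert \ell(Y\vert\theta)-\ell_{\Delta t}(Y\vert\theta)\vert\le C\Vert f\Vert_{C^{0,\gamma}}E^{\gamma}$.

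Next I would reuse the decomposition from the previous lemma:
\begin{align*}
\pi(\theta\vert Y)-\pi_{\Delta t}(\theta\vert Y)
=\frac{\big(\ell(Y\vert\theta)-\ell_{\Delta t}(Y\vert\theta)\big)\pi(\theta)}{\int_{\Theta}\ell(Y\vert\theta)\pi(\theta)\mathrm d\theta}
+\ell_{\Delta t}(Y\vert\theta)\pi(\theta)\,\frac{\int_{\Theta}\big(\ell_{\Delta t}(Y\vert\theta)-\ell(Y\vert\theta)\big)\pi(\theta)\mathrm d\theta}{\int_{\Theta}\ell(Y\vert\theta)\pi(\theta)\mathrm d\theta\cdot\int_{\Theta}\ell_{\Delta t}(Y\vert\theta)\pi(\theta)\mathrm d\theta}.
\end{align*}
Testing against $\psi\in BL(\Theta)$ with $\Vert\psi\Vert_{BL(\Theta)}\le1$, using $\vert\psi\vert\le1$ and $\int_{\Theta}\pi(\theta)\mathrm d\theta=1$, the factor $\int_{\Theta}\ell_{\Delta t}(Y\vert\theta)\pi(\theta)\mathrm d\theta$ cancels in the second term exactly as before, and both terms are bounded by $C\Vert f\Vert_{C^{0,\gamma}}E^{\gamma}\big/\int_{\Theta}\ell(Y\vert\theta)\pi(\theta)\mathrm d\theta$. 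Taking the supremum over such $\psi$ yields the asserted estimate (the $Y_1$ appearing in the stated bound is understood as $Y$).

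The main obstacle is the mild one of ensuring that the constant in \eqref{consistencyNumerical} is uniform in $\theta$ over the compact set $\Theta$; this is the point where compactness of $\Theta$ and the uniform validity of Assumptions \ref{assumptionsOnModelFunctions} are used, together with the continuous dependence of the estimate on the model-function norms. A secondary point is that the normalizing constant $\int_{\Theta}\ell(Y\vert\theta)\pi(\theta)\mathrm d\theta$ must be strictly positive, but this is already implicit in the definition of the posterior and is precisely the quantity appearing on the right-hand side, so no further argument is required; the remainder of the proof is a direct transcription of the preceding lemma.
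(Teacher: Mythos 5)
Your proposal is correct and follows essentially the same route as the paper: a uniform-in-$\theta$ Hölder bound on $\vert\ell(Y\vert\theta)-\ell_{\Delta t}(Y\vert\theta)\vert$ via the Hölder continuity of $f$, the Lipschitz continuity of $\mathcal F$, and the consistency estimate \eqref{consistencyNumerical} (with compactness of $\Theta$ giving uniformity), followed by the same posterior-difference decomposition, testing against $\psi\in BL(\Theta)$ with $\Vert\psi\Vert_{BL(\Theta)}\leq 1$, and taking the supremum. Your reading of the $Y_1$ in the stated bound as $Y$ matches the paper's own proof, which indeed produces $\int_\Theta \ell(Y\vert\theta)\pi(\theta)\mathrm{d}\theta$ in the denominator.
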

\begin{proof}
First, we estimate the difference between the exact likelihood $\ell$ and the approximate likelihood $\ell_{\Delta t}$ for a fixed $\theta\in \Theta$:
\begin{align*}
    &\vert \ell(Y\vert \theta)-\ell_{\Delta t}(Y\vert\theta)\vert
    =
   \big \vert f\big(Y-\mathcal{F}(\mathcal{G}(\theta)\big)- f\big(Y-\mathcal{F}(\mathcal{G}_{\Delta t}(\theta)\big)\big\vert
    \\
    &\leq
    \Vert f\Vert_{C^{0,\gamma}} \Vert Y-\mathcal{F}(\mathcal{G}(\theta))-Y-\mathcal{F}(\mathcal{G}_{\Delta t}(\theta))\Vert_{\mathbb{R}^{M\cdot I}}^\gamma
    \\
    &=
    \Vert f\Vert_{C^{0,\gamma}} \Vert \mathcal{F}(\mathcal{G}(\theta))-\mathcal{F}(\mathcal{G}_{\Delta t}(\theta)) \Vert_{\mathbb{R}^{M\cdot I}}^\gamma
    \\
    &\leq
    C \sup_{t\in[0,T]} \Vert \mathcal{G}(\theta)(t)-\mathcal{G}_{\Delta t}(\theta)(t) \Vert_{BL^*}^\gamma
    \\
    &\leq C\big(\epsilon+r+\rho_F\big(\mu_0,\mu(0)\big) 
   +\left[ (\Delta t)+(\Delta t)^{\alpha}\right]\big)^\gamma,
\end{align*}
where we used the continuity of the noise density $f$ \eqref{rhoHoelder} and of the observation operator $\mathcal{F}$ due to Lemma \ref{ObsContinuous} as well as the consistency of the numerical scheme \eqref{consistencyNumerical}.\\
As $\Theta$ is compact, the constant $C$ can be chosen uniformly for all $\theta$.\\

For any test function $\psi\in BL(\Theta)$ with $\Vert\psi\Vert_{BL(\Theta)}\leq 1$, we can estimate the difference between the exact and the numerical posterior by
\begin{align}
   &\int\limits_\Theta \psi(\theta)\big(\pi(\theta \vert Y)-\pi_{\Delta t}(\theta\vert Y)\big)\mathrm{d}\theta\\
   &=\frac{1}{\int\limits_\Theta \ell(Y\vert\theta)\pi(\theta)\mathrm{d}\theta} \int\limits_\Theta \psi(\theta)\big(\ell(Y\vert\theta)-\ell_{\Delta t}(Y\vert\theta)\big) \pi(\theta)\mathrm{d}\theta\\
   &\quad+ \frac{\int\limits_\Theta \big(\ell_{\Delta t}(Y\vert\theta)-\ell(Y\vert\theta)\big)\pi(\theta)\mathrm{d}\theta}{\int\limits_\Theta \ell(Y\vert\theta)\pi(\theta)\mathrm{d}\theta\cdot\int\limits_\Theta \ell_{\Delta t}(Y\vert\theta)\pi(\theta)\mathrm{d}\theta}\int\limits_\Theta \psi(\theta)\ell_{\Delta t}(Y\vert\theta)\pi(\theta)\mathrm{d}\theta \\
   &\leq \frac{1}{\int\limits_\Theta \ell(Y\vert\theta)\pi(\theta)\mathrm{d}\theta} \int\limits_\Theta C\big(\varepsilon+r+\rho_F\big(\mu_0,\mu(0)\big) 
   +\left[ (\Delta t)+(\Delta t)^{\alpha}\right]\big)^\gamma\pi(\theta)\mathrm{d}\theta\\
   &\quad+ \frac{\int\limits_\Theta C\big(\varepsilon+r+\rho_F\big(\mu_0,\mu(0)\big) 
   +\left[ (\Delta t)+(\Delta t)^{\alpha}\right]\big)^\gamma \pi(\theta)\mathrm{d}\theta}{\int\limits_\Theta \ell(Y\vert\theta)\pi(\theta)\mathrm{d}\theta\cdot\int\limits_\Theta \ell_{\Delta t}(Y\vert\theta)\pi(\theta)\mathrm{d}\theta}\int\limits_\Theta \ell_{\Delta t}(Y\vert\theta)\pi(\theta)\mathrm{d}\theta\\
   &\leq C\big(\epsilon+r+\rho_F\big(\mu_0,\mu(0)\big) 
   +\left[ (\Delta t)+(\Delta t)^{\alpha}\right]) \big)^\gamma \frac{1}{\int\limits_{\Theta} \ell(Y\vert\theta)\pi(\theta)\mathrm{d}\theta}.
\end{align}

By taking the supremum over all test functions, we obtain the desired result.
\end{proof}

Hence, the numerical scheme proposed in this paper can be used to approximate the posterior measure of a Bayesian inverse problem for sufficiently regular measurement data as defined in \eqref{Eq:TypeOfData}.

\section{Approximation of model functions and of the initial measure}
\label{Sec:approximation}
We need to approximate the initial measure $\mu(0)$ and the measure valued model functions $\eta$ and $N$ by finite linear combinations of Dirac measures. To this end, we need to fix a grid (an $\varepsilon$-dense subset of points) on the space $(U,d)$. While we assume that the underlying metric space is separable and hence a finite $\varepsilon$-dense subset $\{z_l\ \vert l=1,...,L\}$ of the compact subset $K\subset U$ as fixed in Section \ref{Sec:NumericalScheme} exists, it is not always clear how to construct this set.

    The number of support points needed to approximate the initial measure highly depends on the exact nature of the set $K$. The covering number $N(\epsilon,U)$ is the minimal number of balls of a certain radius $\varepsilon$ needed to cover a metric space $U$. The scaling of this covering number (or the closely related packing number and metric entropy) depends on the metric space.  If we consider for example the unit ball $B_1\subset \mathbb{R}^d$ with respect to the Euclidean distance, the covering number grows exponentially in the dimension $d$ \cite{Wainwright}.

In many spaces that are interesting for applications, we can explicitly construct a finite dense subset:
\begin{enumerate}
    \item For any bounded subset $\Omega\subset [-R,R]^d$, we can discretize the hypercube $[-R,R]^d$ by the d-fold product of $\{-R,-R+\frac{ \varepsilon}{ \sqrt{d}} ,..., r-\frac {\varepsilon}{\sqrt{d}},R\}$ and take the points that lie within an $\varepsilon$-neighborhood of $\Omega$ as the discretization.
    \item For a graph, we can discretize each edge by discretizing the interval $[0,\ell]$ where $\ell$ is the length of the edge.
    \item It is also possible to construct a grid for a geodesically complete $C^1$ manifold. A geodesically complete manifold is a complete metric space by the Hopf-Rinow Theorem  \cite{Hopf1931}. $C^1$-manifolds admit triangulations which means that they are homeomorphic to a simplicial complex \cite{Whitehead}. Using barycentric subdivision, the faces of the simplicial complex can be refined \cite{MunkresAlgebraic} such that the corners of the simplicial complex cover the manifold up to any prescribed density.
\end{enumerate}

\subsection{Error due to approximation of the measure-valued model functions}\ \\
\noindent We explain here in more detail the approximation of the measure valued model functions $N$ and $\eta$ by a finite sum of weighted Dirac measures if they are linear.

\begin{lemma}[Partition of unity in metric spaces]\ 

\label{Lemma:PartiotionOfUnity}
\noindent 
Let $K$ be a compact subset of $(U,d)$ and fix $\varepsilon>0$. 
There exists a finite index set $\{1,...,L\}$, a Lipschitz continuous partition of unity $(\varphi_l)_{l=1,...,L}$ and a set of points $\{z_l\ \vert l=1,...,L\}$ such that
\begin{enumerate}
\item $\varphi_l:U\rightarrow [0,1]$,
\label{property1}
    \item $\sum_{l=1}^L \varphi_l(x)=1$ for all $x\in K$, 
    \label{property2}
    \item for $l=1,...,L$ $\varphi_l\in BL(U)$, $supp(\varphi_l)\subset B_\varepsilon(z_l)$ and  $d(z_l,K)\leq \varepsilon$,
    \label{property3}
    \item for all $x\in U$: $0\leq\sum_{l=1}^L\varphi_l(x)\leq 1$. \label{sumBounded}
\end{enumerate}

\end{lemma}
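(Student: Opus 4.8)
The plan is to build the partition of unity from a finite open cover of $K$ by $\varepsilon$-balls, using distance functions to produce Lipschitz bump functions, and then normalize carefully so that the sum equals $1$ on $K$ but never exceeds $1$ anywhere on $U$. First, since $K$ is compact and $\{B_{\varepsilon/2}(x)\}_{x\in K}$ is an open cover, extract a finite subcover $B_{\varepsilon/2}(z_1),\dots,B_{\varepsilon/2}(z_L)$ with $z_l\in K$ (so automatically $d(z_l,K)=0\le\varepsilon$); the factor $\varepsilon/2$ leaves room to enlarge to radius $\varepsilon$ later. For each $l$, define the ``hat'' function $g_l(x)=\max\{0,\,1-\tfrac{1}{\varepsilon/2}\,d(x,z_l)\}$, which is $\tfrac{2}{\varepsilon}$-Lipschitz, takes values in $[0,1]$, equals $1$ at $z_l$, and is supported in $\overline{B_{\varepsilon/2}(z_l)}\subset B_\varepsilon(z_l)$. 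On $K$ the subcover property gives $S(x):=\sum_{l=1}^L g_l(x)\ge$ some value, but not necessarily $\ge 1$; the quantity $\min_{x\in K}S(x)=:c_0>0$ is positive by compactness and continuity of $S$.

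Next comes the normalization, which is the only delicate point. The naive choice $\varphi_l=g_l/S$ gives $\sum\varphi_l=1$ on all of $\{S>0\}$, which would violate property \eqref{sumBounded} outside $K$ where we want the sum to be able to drop below $1$ (and property \eqref{property3} requires each $\varphi_l$ to still be Lipschitz and compactly supported, but dividing by $S$ near the boundary of $\bigcup\mathrm{supp}\,g_l$ can blow up the Lipschitz constant). Instead I would set $\varphi_l(x)=g_l(x)\cdot\dfrac{\min\{S(x),\,c_0\}}{c_0\,\max\{S(x),c_0\}}$, or more cleanly $\varphi_l(x)=\dfrac{g_l(x)}{\max\{S(x),\,c_0\}}$. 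With this choice: on $K$ we have $S(x)\ge c_0$, so $\max\{S,c_0\}=S$ and $\sum_l\varphi_l(x)=S(x)/S(x)=1$, giving \eqref{property2}; everywhere we have $\sum_l\varphi_l(x)=S(x)/\max\{S(x),c_0\}\in[0,1]$, giving \eqref{sumBounded} and \eqref{property1}; the support of $\varphi_l$ is contained in the support of $g_l$, hence in $B_\varepsilon(z_l)$, giving the support part of \eqref{property3}. For Lipschitz continuity in \eqref{property3}, note $x\mapsto\max\{S(x),c_0\}$ is Lipschitz (max of a Lipschitz function and a constant) and bounded below by $c_0>0$, so $1/\max\{S(x),c_0\}$ is Lipschitz and bounded; multiplying by the bounded Lipschitz function $g_l$ keeps $\varphi_l$ bounded and Lipschitz, i.e. $\varphi_l\in BL(U)$.

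Finally, I would record the explicit constants for bookkeeping, since the error estimates in Theorem~\ref{Theorem:finiteRangeApproximation} and later sections will need uniform control: $\|\varphi_l\|_\infty\le 1$, and $\mathrm{Lip}(\varphi_l)\le \mathrm{Lip}(g_l)/c_0 + \|g_l\|_\infty\,\mathrm{Lip}(1/\max\{S,c_0\})\le \tfrac{2}{\varepsilon c_0}+\tfrac{1}{c_0^2}\cdot L\cdot\tfrac{2}{\varepsilon}$, so $\|\varphi_l\|_{BL}\le 1+\tfrac{2}{\varepsilon c_0}(1+L/c_0)$. The main obstacle — and the reason for the $\max\{S,c_0\}$ truncation rather than plain division by $S$ — is precisely reconciling the two competing requirements \eqref{property2} and \eqref{sumBounded}: the sum must be exactly $1$ on $K$ yet controlled (and the individual functions must stay Lipschitz with a quantified constant) on the region where $S$ is small near $\partial\bigcup\mathrm{supp}\,g_l$. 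Everything else is a routine verification that the listed properties hold for this explicit construction.
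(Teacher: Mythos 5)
Your construction is correct, and it proves all four properties, but it takes a genuinely different route from the paper. The paper's proof is non-constructive in flavor: it uses separability to produce a countable cover of $U$ by $\varepsilon$-balls, invokes the general existence theorem for Lipschitz partitions of unity on metric spaces subordinate to such a cover (\cite[Theorem 2.6.5]{Cobza2019}), and then simply discards all members whose support does not meet $K$; properties \eqref{property2}--\eqref{sumBounded} then follow because the discarded functions vanish on $K$ and one only sums over a subfamily of a full partition of unity. You instead build everything by hand: hat functions $g_l(x)=\max\{0,1-\tfrac{2}{\varepsilon}d(x,z_l)\}$ over a finite $\varepsilon/2$-subcover with centers in $K$, and the truncated normalization $\varphi_l=g_l/\max\{S,c_0\}$ with $c_0=\min_K S>0$, which is exactly the right device to reconcile $\sum_l\varphi_l=1$ on $K$ with $\sum_l\varphi_l\le 1$ on $U$ while keeping each $\varphi_l$ Lipschitz (the naive $g_l/S$ would indeed ruin the Lipschitz bound near $\partial\{S>0\}$). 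Note that $g_l\le S\le\max\{S,c_0\}$ gives $\varphi_l\le 1$, and the shrunken radius guarantees $\mathrm{supp}(\varphi_l)\subset\overline{B_{\varepsilon/2}(z_l)}\subset B_\varepsilon(z_l)$, so the support condition and $d(z_l,K)=0$ are immediate. What your approach buys is self-containedness and explicit constants ($\|\varphi_l\|_\infty\le 1$, $\mathrm{Lip}(\varphi_l)\lesssim \tfrac{1}{\varepsilon c_0}(1+L/c_0)$), which connects nicely to the later discussion in Remark \ref{Remark:issuesNonLinear} about how the Lipschitz constants scale with $\varepsilon$; the caveat is that $c_0$ depends on the geometry of the chosen subcover and can be small, so your quantitative bound is cover-dependent, whereas the paper's citation-based argument is shorter and delegates all such technicalities to the quoted theorem. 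Either proof suffices for the lemma as stated.
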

\begin{proof}

Due to the separability of $U$, for $\varepsilon>0$ there exist $\{z_l\ \vert l\in\mathbb N\}$ that fulfills 
\begin{align}
    U=\bigcup_{l\in\mathbb N} B_\varepsilon(z_l).
\end{align}
As $K$ is compact, we can extract a finite subcover of $K$.\\
Denote by $\{1,...,L\}$ be the finite index set such that $B_\varepsilon(z_l)\cap K\neq\emptyset$.\\
    According to \cite[Theorem 2.6.5]{Cobza2019} there exists a Lipschitz continuous partition of unity for the entire space $U$ subordinate to the cover $(B_\epsilon(z_l))_{l\in\mathbb N}$, denoted by $(\varphi_l)_{l\in\mathbb N}$.
    All of these functions map from $U$ to $[0,1]$.\\
     By taking only the functions $(\varphi_l)_{l=1,...,L}$, whose support intersects with $K$, we obtain a  partition of unity of $K$ with functions that are Lipschitz continuous on all of $U$. This construction implies properties \eqref{property2} and \eqref{property3}. 
    Property \eqref{sumBounded} holds immediately, as we only sum over a subset of the partition of unity for the whole space.
\end{proof}

We use the partition of unity to construct approximations of the model functions $\eta$ and $N$.\\
Recall that by Assumption \ref{assumptionsOnModelFunctions}\ref{Ass:ModelFunctionsTight} $\eta$ and $N$ are tight, i.e. the majority of the mass lies within the compact set $K$.\\
Now, we are ready to proof Theorem \ref{Theorem:finiteRangeApproximation}.

\begin{proof}[Proof of Theorem~\ref{Theorem:finiteRangeApproximation}]
Recall that    
\begin{align}
       &\hat\eta(t,x)(\cdot)=\sum_{l=1}^L 
      \left(\int\limits_U \varphi_l(y)\mathrm{d}[\eta(t,x)](y)\right) \delta_{z_l}(\cdot) \text{ and }\\
       &\hat N(t)(\cdot)=\sum_{l=1}^L\left( \int\limits_U \varphi_l(y)\mathrm{d}[N(t)](y)\right)  \delta_{z_l}(\cdot).
   \end{align}

    The approximation error can be calculated as follows. Let $\psi\in BL(U)$ with $\Vert \psi \Vert_{BL}\leq 1$. Using that $\mathrm{supp}(\varphi_l)\subset B_\varepsilon(z_l)$, we obtain
    \begin{align}
        &\int\limits_U \psi(y)\mathrm{d}\big[\hat\eta(t,x)-\eta(t,x)\big](y)\\
        &=  \int\limits_U \psi(y)\mathrm{d}[\hat\eta(t,x)](y)
        -  \int\limits_U \sum_{l=1}^L \varphi_l(y)\psi(y)\mathrm{d}[\eta(t,x)](y)
        \\
        & \quad -\int\limits_{U} \big(1-\sum_{l=1}^L \varphi_l(y)\big)\psi(y)\mathrm{d}[\eta(t,x)](y)\\
        &= \sum_{l=1}^L\int\limits_U \varphi_l(y) \big(\psi(z_l)-\psi(y)\big)\mathrm{d}[\eta(t,x)](y)
        +\int\limits_U\big( 1-\sum_{l=1}^L\varphi_l(y)\big)\psi(y)\mathrm{d}[\eta(t,x)](y)\\
        &\leq \sum_{l=1}^L \varepsilon \int\limits_U \varphi_l(y)\mathrm{d}[\eta(t,x)](y) +[\eta(t,x)](U\backslash K)\\
       & \leq \varepsilon\Vert \eta(t,x)\Vert_{BL^*}+r\Vert \eta(t,x)\Vert_{TV}.
    \end{align}
    Here, we used that within the support $B_\epsilon(z_l)$ of $\varphi_l$, the distance of any two points is at most $\varepsilon$. Due to Lemma \ref{Lemma:PartiotionOfUnity} \eqref{property3} and \eqref{sumBounded}, $\big( 1-\sum_{l=1}^L\varphi_l(y)\big)\psi(y)$ vanishes within $K$ and is at most 1 outside of $K$.
    By taking the supremum over all test functions, we obtain that
    \begin{align}
        \rho_F\big(\hat\eta(t,x),\eta(t,x)\big)\leq (\varepsilon+r) \Vert \eta(t,x)\Vert_{BL^*}.
    \end{align}
    Note that for non-negative measures, the norms $\Vert\cdot\Vert_{BL^*}$ and $\Vert\cdot\Vert_{TV}$ coincide, see Theorem \ref{propertiesM+}.
With a similar reasoning, we can establish a bound on the approximation error for $\hat N$.\\

    For $t\in[0,T]$ and $x\in U$, we bound the flat norm of $\hat\eta(t,x)$. 
    By the triangle inequality, we obtain that
    \begin{align}
        \Vert \hat\eta(t,x)\Vert_{BL^*}
        \leq \Vert \hat\eta(t,x)-\eta(t,x)\Vert_{BL^*}+ \Vert \eta(t,x)\Vert_{BL^*}
        \leq (\varepsilon+r+1)\Vert\eta(t,x)\Vert_{BL^*}.
    \end{align}
       Consequently, the assumptions in \ref{assumptionsOnModelFunctions} involving estimates or integrals of this norm follow immediately from the assumptions on $\eta$.
       \\
    Next, we need to verify the narrow continuity of $\hat \eta$ in $x$. 
    For a test function $\psi\in C^0_b(U)$, we obtain for $d(x_1,x_2)\rightarrow 0 $:

    \begin{align}
        \int\limits_U \psi(y)\mathrm{d}[\hat\eta(t,x_1)-\hat\eta(t,x_2)](y)
        &= \sum_{l=1}^L \psi(z_l)\int\limits_U \varphi_b(z)\mathrm{d}(\eta(t,x_1)-\eta(t,x_2))(z)\rightarrow 0
    \end{align}
    where we used the narrow continuity of $\eta$ in $x$, see \ref{assumptionsOnModelFunctions} \eqref{assumptionsSupremum}. 

\end{proof}
\begin{remark}
\label{Remark:issuesNonLinear}
    Note that the Lipschitz constants of the partition of unity depend on the radius $\varepsilon$ and the geometry of the covering of $K$. For any $x\in K$, it holds that
    \begin{align}
        1
        =\sum_{l=1}^L \varphi_l(x)
        \leq \sum_{l=1}^L \Vert \varphi_l\Vert_\infty
        \leq \varepsilon \sum_{l=1}^L \vert \varphi_l\vert_{Lip}
    \end{align}
    as $\varphi_l$ vanishes on the boundary of $B_\varepsilon(z_l)$.
    Hence, the sum of the Lipschitz constants grows as follows
    \begin{align}
        \frac 1 \varepsilon\leq \sum_{l=1}^L \vert \varphi_l\vert_{Lip}.
    \end{align}
    This prevents us from approximating a nonlinear $N$ or $\eta$ with the approach suggested above, as the approximations would be Lipschitz continuous w.r.t. a fixed covering with balls of radius $\varepsilon$ but the Lipschitz constant would explode as $\varepsilon\rightarrow 0$.
    While for any fixed value of $\varepsilon$,  the conditions for the well posedness of \eqref{eq:mod_pushforward} are fulfilled (see \ref{assumptionsOnModelFunctions}), all of the constants in the estimates may depend on the Lipschitz constants of $\hat N$ and $\hat \eta$ and we would hence not be able to control the error.\\
    When trying to work without the Lipschitz continuity of $\hat\eta$ and $\hat N$ w.r.t. the measure argument, we cannot use the arguments from \cite[Chapter 3]{düll_gwiazda_marciniak-czochra_skrzeczkowski_2021} for the existence of a solution $\hat\mu$ to \eqref{eq:mod_pushforward} with model functions $\hat\eta$ and $\hat N$ which an essential part of the proofs in Section \ref{Sec:convergence}.
\end{remark}

\subsection{Error due to approximation of the initial measure} \hfill\

\begin{lemma}[Error from the initial approximation]\ 

\label{IntialError}
Denote by 
\begin{align}
    \mu_0
    =\sum_{l=1}^L [\mu(0)](U_{\varepsilon,l})\delta_{z_l}
\end{align}
the approximation of the initial measure $\mu(0)$ (see Section \ref{ApproximationInitial}).
    The error from this approximation in the flat metric is
    \begin{align*}
        \rho_F\big(\mu(0),\mu_0\big)\leq (\varepsilon+r)\Vert \mu(0)\Vert_{TV}.
    \end{align*}
\end{lemma}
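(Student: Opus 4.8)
The plan is to estimate the flat distance directly from the definition by testing against an arbitrary $\psi \in BL(U)$ with $\Vert\psi\Vert_{BL}\le 1$, decomposing the mass of $\mu(0)$ according to the disjoint pieces $U_{\varepsilon,l}$ that cover $K$, and separately accounting for the mass of $\mu(0)$ lying outside $K$. This mirrors exactly the structure of the proof of Theorem~\ref{Theorem:finiteRangeApproximation}: the $\varepsilon$-term comes from moving mass within a ball of radius $\varepsilon$ to its center $z_l$, and the $r$-term comes from the tightness cutoff.

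First I would write, for such a test function $\psi$,
\begin{align*}
\int_U \psi\,\mathrm{d}[\mu(0)-\mu_0]
&= \int_K \psi\,\mathrm{d}\mu(0) + \int_{U\setminus K}\psi\,\mathrm{d}\mu(0) - \sum_{l=1}^L [\mu(0)](U_{\varepsilon,l})\,\psi(z_l)\\
&= \sum_{l=1}^L \int_{U_{\varepsilon,l}} \big(\psi(y)-\psi(z_l)\big)\,\mathrm{d}\mu(0)(y) + \int_{U\setminus K}\psi\,\mathrm{d}\mu(0),
\end{align*}
using that $\{U_{\varepsilon,l}\}_{l=1}^L$ is a disjoint partition of $K$, so $\sum_l \int_{U_{\varepsilon,l}}\psi\,\mathrm{d}\mu(0) = \int_K \psi\,\mathrm{d}\mu(0)$. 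Since $U_{\varepsilon,l}\subset B_\varepsilon(z_l)$, any $y\in U_{\varepsilon,l}$ satisfies $d(y,z_l)<\varepsilon$, hence $|\psi(y)-\psi(z_l)|\le \Vert\psi\Vert_{BL}\,d(y,z_l)\le\varepsilon$. Therefore the first sum is bounded by $\varepsilon\sum_l [\mu(0)](U_{\varepsilon,l}) = \varepsilon\,[\mu(0)](K)\le\varepsilon\Vert\mu(0)\Vert_{TV}$, and the second term is bounded by $\Vert\psi\Vert_\infty\,[\mu(0)](U\setminus K)\le r\Vert\mu(0)\Vert_{TV}$, where I would invoke the tightness of $\mu(0)$ (the cutoff choice from \eqref{Eq:tightnessSolution}, applied at $t=0$) to bound $[\mu(0)](U\setminus K)\le r$, together with $[\mu(0)](U\setminus K)\le\Vert\mu(0)\Vert_{TV}$. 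Taking the supremum over all admissible $\psi$ gives $\rho_F(\mu(0),\mu_0)\le(\varepsilon+r)\Vert\mu(0)\Vert_{TV}$.

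There is essentially no hard obstacle here; the only points requiring minor care are (i) confirming that $\{U_{\varepsilon,l}\}$ genuinely forms a disjoint cover of $K$, which is immediate from the definition $U_{\varepsilon,l}=\big(B_\varepsilon(z_l)\setminus\bigcup_{i<l}B_\varepsilon(z_i)\big)\cap K$ combined with the fact that $(B_\varepsilon(z_l))_{l=1,\dots,L}$ covers $K$; and (ii) making sure the bound on $[\mu(0)](U\setminus K)$ is the intended one — strictly speaking $r$ in \eqref{Eq:tightnessSolution} is chosen so that the \emph{solution} $\mu(t)$ has $[\mu(t)](U\setminus K)\le r$ for all $t$, in particular at $t=0$, so the stated bound with the factor $\Vert\mu(0)\Vert_{TV}$ is a slightly conservative but valid estimate. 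I would also note in passing, as in Theorem~\ref{Theorem:finiteRangeApproximation}, that $\Vert\mu(0)\Vert_{TV}=\Vert\mu(0)\Vert_{BL^*}$ for nonnegative measures by Theorem~\ref{propertiesM+}, so the result can equivalently be phrased with the flat norm.
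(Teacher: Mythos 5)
Your proposal is correct and follows essentially the same argument as the paper's proof: test against $\psi$ with $\Vert\psi\Vert_{BL}\le 1$, split the integral over the disjoint sets $U_{\varepsilon,l}$ covering $K$ plus the tail $U\setminus K$, bound the first part by $\varepsilon\Vert\mu(0)\Vert_{TV}$ via the Lipschitz bound within each $\varepsilon$-ball and the second by the tightness cutoff $r$. The additional remarks on the disjointness of the $U_{\varepsilon,l}$ and on $\Vert\cdot\Vert_{TV}=\Vert\cdot\Vert_{BL^*}$ for nonnegative measures are consistent with the paper and do not change the argument.
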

\begin{proof}

    Let $\psi\in BL(U)$ with $\Vert\psi\Vert_{BL}\leq 1$. We calculate using that the diameter of the set $U_{\varepsilon,l}$ for all $l=1,...,L$ is at most $\varepsilon$:
    \begin{align*}
        \int\limits_U\psi(x)\mathrm{d}[\mu(0)-\mu_0](x)
        &=\sum_{l=1}^L \int\limits_{U_{\varepsilon,l}}\big(\psi(x)-\psi(z_l)\big)\mathrm{d}[\mu(0)](x)+\int\limits_{U\backslash K} \psi(x)\mathrm{d}[\mu(0)](x)\\
        &\leq \sum_{
        l=1}^L\left(\sup_{x\in U_{\varepsilon,l}}d(x,z_l)\right)[\mu(0)](U_{\varepsilon,l}) +r \Vert \mu(0)\Vert_{TV}\\
        &\leq \varepsilon \Vert {\mu}(0)\Vert_{TV}+r\Vert \mu(0)\Vert_{TV}
        \leq (\varepsilon+r)\Vert \mu(0)\Vert_{TV}.
    \end{align*}
    For the first inequality, we used the tightness of the measure $\mu(0)$ (see \eqref{Eq:tightnessSolution}).
\end{proof}

\section{Proving the convergence order of the numerical scheme}
\label{Sec:convergence}
 In this section, we derive the order of convergence of the numerical scheme proposed in Section \ref{Sec:NumericalScheme}. To this end, we first show that the operator splitting scheme is applicable to the structured population model \eqref{eq:mod_pushforward} and use this to bound the error from the approximating scheme.
 \begin{figure}
    \centering
    \includegraphics[width=\textwidth]{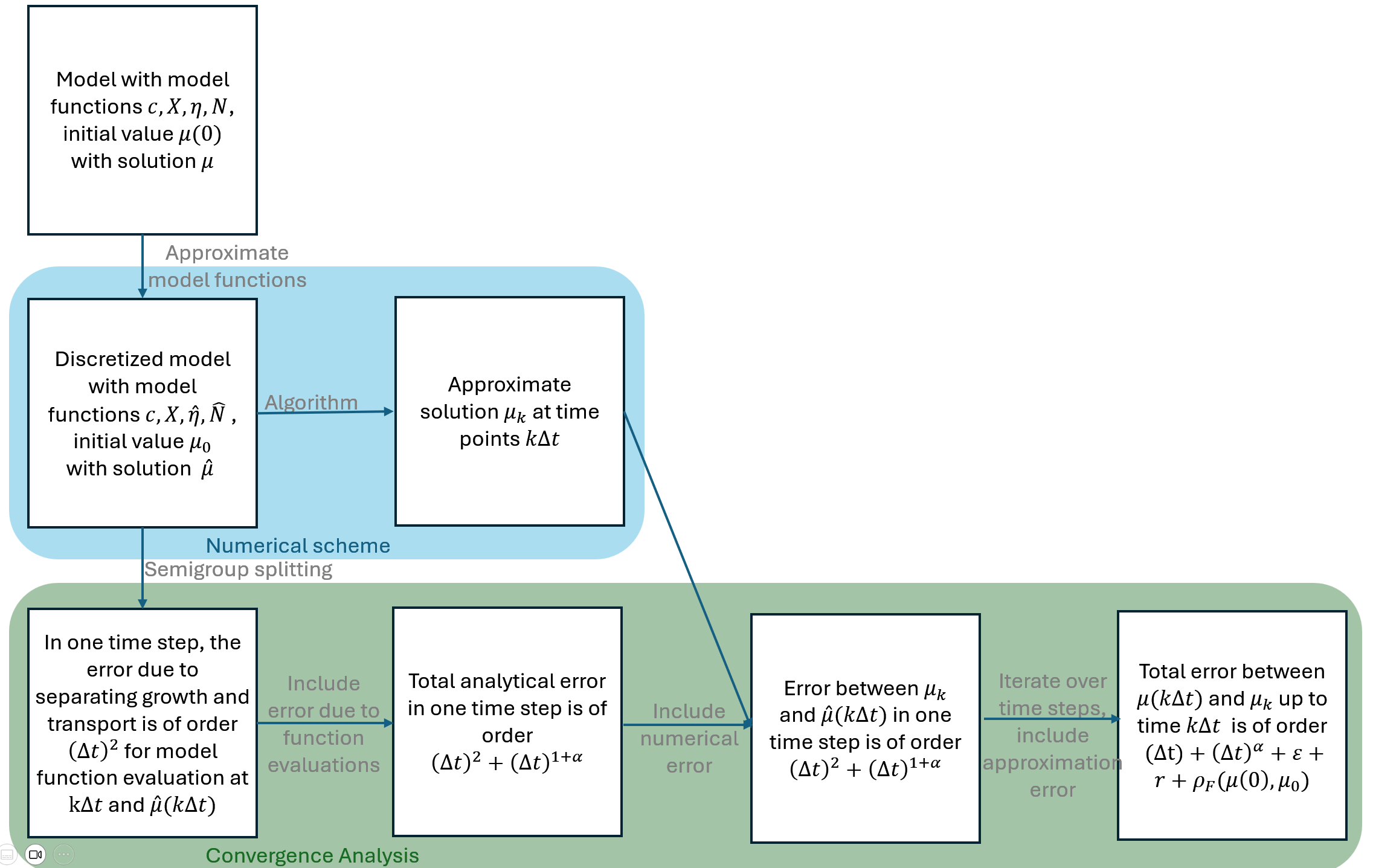}
    \caption{Graphical overview of the convergence analysis conducted in this paper. $\mu$ always refers to a measure, $\Delta t$ is the size of the time steps, $\epsilon$ is the grid size and $r$ controls the cut off error.}
    \label{fig:overview}
\end{figure}
\subsection{Semigroup splitting}
\label{semigroupSplitting}
In the numerical scheme proposed in Section \ref{Sec:NumericalScheme}, the approximation of the solution is split into two steps to separate the effects of the two distinct processes transport and growth. We thus apply the semigroup splitting technique to approximate the solution semigroup by a suitable combination of two semigroups, each corresponding to one of the processes. In this section we follow the framework developed in \cite{MR2050900}.\\

\begin{definition}[Exponentially Lipschitz semigroup \cite{MR2050900}]\ 

Let $\mathcal{T}$ be an operator $\mathcal{T}:[0,\infty)\times U\rightarrow U$ that has the following properties:
\begin{enumerate}
    \item $\mathcal{T}_0=\mathrm{Id}$ and $\mathcal{T}_t\circ\mathcal{T}_s=\mathcal{T}_{t+s}$ for all $t,s\in[0,\infty)$.
    \item For all $T>0,$ $R>0$ and $x_0\in U$ there exists a constant $K=K(T,R,x_0)$ such that for all $t\in[0,T]$ and any $x\in B_R(x_0)$ it holds that $d(\mathcal{T}_tx,x)\leq Kt$.
    \item There exists a constant $C>0$ such that for all $t>0$ and all $x,y\in U$ we have $d(\mathcal{T}_tx,\mathcal{T}_t y)\leq e^{Ct}d(x,y)$.
\end{enumerate}

\end{definition}

Given two semigroups, we want to study the limiting process of applying the semigroups in an alternating manner on small time intervals. Under suitable conditions (as specified in Theorem \ref{operatorSplittingConverges}), the limit coincides with the semigroup generated by both processes together.

\begin{definition}[Semigroup splitting approximation \cite{MR2050900}]\ 

\label{OperatorSplitting Approximation}
  For two exponentially Lipschitz semigroups $\mathcal{T}^1,\mathcal{T}^2$, we define 
\begin{align}
&\Sigma^{2,1,\varepsilon}:[0,\infty)\times U\rightarrow U\\
&\Sigma_t^{2,1,\varepsilon} x= 
\begin{cases}
    \mathcal{T}_t^1 x, & \text{ if } t \in[0,\varepsilon)\\
    \mathcal{T}_{t-n\varepsilon}^1\left(\mathop{\bigcirc}\limits_n \mathcal{T}^2_\varepsilon\mathcal{T}^1_\varepsilon\right)x, &\text{ if } t\in[n\varepsilon, (n+1)\varepsilon) \text{ for some } n\in\mathbb{N},
\end{cases}
\end{align}
where  $\mathop{\bigcirc}\limits_n \mathcal{T}$ denotes the n-times composition of $\mathcal T$.
\end{definition}

\begin{theorem}[Limit of the semigroup splitting approximation\cite{MR2050900}]\ 

    \label{operatorSplittingConverges}
    Let $\mathcal{T}^1,\mathcal{T}^2$ be two exponentially Lipschitz semigroups.
    Assume that for any $T>0$, $R>0$ and $x_0\in U$ there exists a constant $H=H(T,R,x_0)$ such that for any $x\in B_R(x_0)$:
    \begin{align}
    \label{lackOfCommutivity}
        d\big(\mathcal{T}^1_t\mathcal{T}^2_t x,\mathcal{T}^2_t\mathcal{T}^1_t x\big)\leq Ht^2.
    \end{align}
    We refer to this quantity as the lack of commutativity of the two semigroups.\\\
    Under condition \eqref{lackOfCommutivity}, the point-wise limit $\Sigma_t^{2,1,\varepsilon_n} x$ for $n\rightarrow \infty$ exists along the sequence $\varepsilon_n=\varepsilon\cdot 2^{-n}$ for any fixed $\varepsilon>0$. We denote the limit by $\Sigma_t x$.\\
    The limit is uniform in time $t$, i.e. for each $x\in U$
    \begin{align}
        \lim_{n\rightarrow \infty} \sup_{t\in[0,T]} d\left(\Sigma_t^{2,1,\varepsilon_n} x,\Sigma_t x\right)=0.
    \end{align}
    An analogous limiting procedure with the sequence $\Sigma_t^{1,2,\varepsilon_n} x$ yields the same limit $\Sigma_t x$.\\
    
    Further, it holds that \cite[Lemma 2.7]{colombo}
    \begin{align}
        \lim\limits_{t\rightarrow 0} \frac{1}{t^2} d(\mathcal{T}^1_t\mathcal{T}^2_t x,\Sigma_t x)=0,
    \end{align}
    which means that for small time steps $t$, even one iteration of the operator splitting is a good approximation.\\

    Lastly, the limit  $\Sigma_t$ is again an exponentially Lipschitz semigroup with constants $K=K_1+K_2$ and $C=C_1+C_2$.  
\end{theorem}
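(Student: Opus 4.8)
The plan is to treat this as a Lie--Trotter--Kato type statement and prove it by a telescoping Cauchy argument in the spirit of \cite{MR2050900,colombo}. First I would carry out the standard localization: fixing $x\in B_R(x_0)$ and a horizon $T$, property~2 of an exponentially Lipschitz semigroup keeps every orbit of $\mathcal{T}^1$, $\mathcal{T}^2$, and of all the splitting approximations that appear inside a fixed enlarged ball $B_{R'}(x_0)$ with $R'=R+(K_1+K_2+1)T$, so that the constants $K=K(T,R',x_0)$ and $H=H(T,R',x_0)$ from \eqref{lackOfCommutivity} may be used uniformly along every composition below. This bookkeeping is routine but load-bearing, since without it none of the following estimates are uniform in $t$.

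The heart of the matter is a single estimate comparing $\Sigma^{2,1,\varepsilon}$ with $\Sigma^{2,1,\varepsilon/2}$. On one sub-interval of length $\varepsilon$ the former applies $\mathcal{T}^2_\varepsilon\mathcal{T}^1_\varepsilon$, while the latter applies $\mathcal{T}^2_{\varepsilon/2}\mathcal{T}^1_{\varepsilon/2}\mathcal{T}^2_{\varepsilon/2}\mathcal{T}^1_{\varepsilon/2}$; swapping the inner pair $\mathcal{T}^1_{\varepsilon/2}\mathcal{T}^2_{\varepsilon/2}$ into $\mathcal{T}^2_{\varepsilon/2}\mathcal{T}^1_{\varepsilon/2}$ costs at most $H(\varepsilon/2)^2$ by \eqref{lackOfCommutivity}, and then the semigroup property merges $\mathcal{T}^1_{\varepsilon/2}\mathcal{T}^1_{\varepsilon/2}=\mathcal{T}^1_\varepsilon$ and $\mathcal{T}^2_{\varepsilon/2}\mathcal{T}^2_{\varepsilon/2}=\mathcal{T}^2_\varepsilon$, so the two blocks agree up to an error of size $e^{C_2\varepsilon/2}H\varepsilon^2/4$. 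Propagating this discrepancy to time $t$ through the remaining at most $t/\varepsilon$ blocks, each of which is $e^{(C_1+C_2)\varepsilon}$-Lipschitz, and summing over the at most $t/\varepsilon$ sub-intervals, I obtain $\sup_{t\in[0,T]}d\big(\Sigma^{2,1,\varepsilon}_t x,\Sigma^{2,1,\varepsilon/2}_t x\big)\le C'\varepsilon$ with $C'$ depending only on $T,H,C_1,C_2,R,x_0$. Applying this along $\varepsilon_n=\varepsilon\cdot 2^{-n}$ and telescoping gives $\sum_{n\ge 0}\sup_t d\big(\Sigma^{2,1,\varepsilon_n}_t x,\Sigma^{2,1,\varepsilon_{n+1}}_t x\big)\le C'\varepsilon\sum_n 2^{-n}<\infty$, so $(\Sigma^{2,1,\varepsilon_n}_t x)_n$ is Cauchy uniformly in $t$; completeness of $U$ yields the limit $\Sigma_t x$ together with uniform-in-time convergence, and the same bound shows the limit is independent of the chosen initial $\varepsilon$.

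For the remaining assertions I would argue as follows. For the equality of the $(1,2)$- and $(2,1)$-limits, the identity $(\mathcal{T}^1_\varepsilon\mathcal{T}^2_\varepsilon)^m=\mathcal{T}^1_\varepsilon(\mathcal{T}^2_\varepsilon\mathcal{T}^1_\varepsilon)^{m-1}\mathcal{T}^2_\varepsilon$ reduces the comparison of $\Sigma^{1,2,\varepsilon}_t$ and $\Sigma^{2,1,\varepsilon}_t$ to commuting one factor $\mathcal{T}^2_\varepsilon$ past $m-1$ blocks $\mathcal{T}^2_\varepsilon\mathcal{T}^1_\varepsilon$; each transposition costs $O(\varepsilon^2)$ after an $e^{(C_1+C_2)\varepsilon}$-Lipschitz propagation, and since $m\varepsilon\le T$ the total is $O(\varepsilon)\to 0$, so both sequences have the same limit $\Sigma_t$. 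For the semigroup/Lipschitz properties of $\Sigma$: $\Sigma_0=\mathrm{Id}$ is immediate from $\Sigma^{2,1,\varepsilon}_0=\mathrm{Id}$; block by block one has $d(\Sigma^{2,1,\varepsilon}_t x,x)\le (K_1+K_2)t$ and $d(\Sigma^{2,1,\varepsilon}_t x,\Sigma^{2,1,\varepsilon}_t y)\le e^{(C_1+C_2)t}d(x,y)$ uniformly in $\varepsilon$, and both pass to the limit; and the semigroup law $\Sigma_{s+t}=\Sigma_t\circ\Sigma_s$ follows from the fact that $\Sigma^{2,1,\varepsilon}$ satisfies it exactly whenever $s\in\varepsilon\mathbb{N}$, by choosing grid points $s_n\to s$ with $s_n\in\varepsilon_n\mathbb{N}$ and passing to the limit, using the uniform-in-$t$ convergence, the continuity of $t\mapsto\Sigma^{2,1,\varepsilon_n}_t x$ (itself a consequence of $d(\mathcal{T}^i_t y,y)\le K_i t$), and the uniform exponential-Lipschitz bound to absorb the perturbation of the base point.

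The hard part will be the refined rate, $\lim_{t\to 0}t^{-2}d(\mathcal{T}^1_t\mathcal{T}^2_t x,\Sigma_t x)=0$. Taking $\varepsilon=t$ so that $\Sigma^{1,2,t}_t x=\mathcal{T}^1_t\mathcal{T}^2_t x$ and rerunning the Cauchy estimate above on $[0,t]$ only delivers the crude bound $d(\mathcal{T}^1_t\mathcal{T}^2_t x,\Sigma_t x)\le Ct^2$, because \eqref{lackOfCommutivity} is merely $O(t^2)$ per swap; upgrading $O(t^2)$ to $o(t^2)$ is exactly the content of \cite[Lemma~2.7]{colombo}, which I would invoke directly rather than reprove. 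Everything else reduces to the two ingredients above — the lack-of-commutativity bound \eqref{lackOfCommutivity} and the exponential-Lipschitz contraction/expansion of the individual semigroups — combined with the localization step.
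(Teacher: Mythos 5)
The paper never proves this theorem: it is imported wholesale from \cite{MR2050900}, with the refined rate $\lim_{t\to 0}t^{-2}\,d(\mathcal{T}^1_t\mathcal{T}^2_t x,\Sigma_t x)=0$ taken from \cite[Lemma 2.7]{colombo}, so there is no in-paper argument to match your proposal against. What you wrote is the standard Lie--Trotter telescoping proof, i.e.\ the strategy of the cited source, and its core is sound: the single-block swap costing $H(\varepsilon/2)^2$ by \eqref{lackOfCommutivity}, the $e^{(C_1+C_2)\varepsilon}$-Lipschitz propagation through at most $t/\varepsilon$ blocks giving $\sup_{t\in[0,T]}d\big(\Sigma^{2,1,\varepsilon}_t x,\Sigma^{2,1,\varepsilon/2}_t x\big)\le C'\varepsilon$, the dyadic telescoping plus completeness of $U$, the transposition count identifying the $(1,2)$- and $(2,1)$-limits, the uniform-in-$\varepsilon$ bounds passing to the limit for the exponential Lipschitz properties of $\Sigma$, and the grid-point argument for the semigroup law are all correct; deferring the $o(t^2)$ statement to \cite[Lemma 2.7]{colombo} rather than reproving it is exactly what the paper itself does, since the crude commutator counting only yields $O(t^2)$.

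Two points need tightening, though both are repairable with the tools you already invoke. First, your localization radius $R'=R+(K_1+K_2+1)T$ is circular as written: $K_1,K_2$ are constants attached to the very ball you are defining. The clean route is to exploit that property 3 is global in space, so that $d(\mathcal{T}^i_t y,x_0)\le e^{C_i t}d(y,x_0)+K_i t$ with $K_i$ taken from property 2 on any fixed small ball around $x_0$; iterating this over blocks of total length at most $T$ confines all orbits of all splitting approximations to a ball whose radius is defined without self-reference, and on that ball the constant $H$ of \eqref{lackOfCommutivity} may then be fixed. (You also implicitly need the $K_i t$ drift bound to compare the off-grid tail pieces $\mathcal{T}^1_{t-n\varepsilon}$ versus $\mathcal{T}^1_{t-n'\varepsilon/2}$ at times $t$ that are not multiples of the step; your claimed $C'\varepsilon$ absorbs this, but say so.) Second, ``the same bound shows the limit is independent of the chosen initial $\varepsilon$'' is too quick: your two-grid estimate compares only nested dyadic partitions, and comparing $\Sigma^{2,1,\varepsilon}$ with $\Sigma^{2,1,\varepsilon'}$ for non-commensurable step sizes requires redoing the commutator counting for non-aligned grids (still of order $\varepsilon+\varepsilon'$, but not literally the same computation). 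This independence is actually load-bearing, both for $\Sigma$ to be well defined and for your semigroup-law argument via approximating $s$ by grid points $s_n\in\varepsilon_n\mathbb{N}$. I would classify these as imprecisions rather than gaps, and the overall proposal as a faithful reconstruction of the cited proof.
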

\begin{remark}
    The condition \eqref{lackOfCommutivity} is used in \cite{MR2050900} instead of a compactness assumption. It is a rather strong assumption but can be fulfilled in our setting if we assume sufficient regularity for the model functions in \eqref{eq:mod_pushforward}.
\end{remark}

In the remainder of this section, we show that the operator splitting method is applicable to the structured population model \eqref{eq:mod_pushforward} and that the limit $\Sigma_t$ is the solution operator to the structured population model. To this end, we divide the model \eqref{eq:mod_pushforward} into two processes, prove that the semigroups generated by these processes are exponentially Lipschitz and fulfill condition \eqref{lackOfCommutivity}. Lastly, we show that the limit of the operator splitting procedure, that exists due to Theorem \ref{operatorSplittingConverges}, is indeed a solution of \eqref{eq:mod_pushforward}.\\

We only need to consider linear and autonomous model functions. This is motivated by the fact, that we want to use this operator splitting method for the convergence of a numerical scheme, in which we evaluate all of the model functions in fixed values at the beginning of each iteration of the scheme (see Section \ref{Sec:NumericalScheme}). \\
For the ease of notation we omit the time and measure arguments of the model functions in this section, i.e. we write for example $c(x)$ instead of $c(t,x,\mu)$.\\

Let $\mathcal{S}^1$ be the one-parameter semigroup on $(\mathcal{M}^+(U),\rho_F)$ given by
\begin{equation}
\label{s1}
    \mathcal{S}_t^1\mu =  X(t,\cdot)_\#\mu \text{ for } t\in[0,T].
\end{equation}
This semigroup is generated by the transport process of model \eqref{eq:mod_pushforward}.\\
Further, let $\mathcal{S}^2$ be the one-parameter semigroup given by
\begin{align}
\begin{split}
    \label{s2}
    \mathcal{S}_t^2\kappa(\cdot)
    =
    \kappa(\cdot)e^{tc(\cdot)}
    &+ \int\limits_0^t \int\limits_U [\hat\eta(x)](\cdot)\mathrm{d}[\mathcal{S}_\tau^2\kappa](x) e^{(t-\tau)c(\cdot)}\mathrm{d}\tau\\
     &+\int\limits_0^t \hat N(\cdot)e^{(t-\tau)c(\cdot)}\mathrm{d}\tau 
    \end{split}
 \end{align}
 for $t\in[0,T]$.\\
Note that in general, the solutions to \eqref{eq:mod_pushforward} form two-parameter semigroups (theorem \ref{Thm:solution}). As the model functions are autonomous in this section, we actually obtain one-parameter semigroups.\\

\begin{lemma}
\label{LipschitzSemigroups}
    The semigroups $\mathcal{S}^1$ and $\mathcal{S}^2$ are exponentially Lipschitz continuous.
\end{lemma}
\begin{proof}
First of all, due to Theorem \ref{Thm:solution}, $\mathcal{S}^1$ and $\mathcal{S}^2$ are indeed semigroups.\\ 

The estimates are obtained from a direct calculation using  the propoerties of the solutions (Theorem \ref{Thm:solution}), the scaling properties of the flat norm (Lemma \ref{scalingFlatNorm}) and Assumptions \ref{assumptionsOnModelFunctions}\eqref{assumptionsSupremum} and \eqref{assumptionsXLipschitzTX}.
\end{proof}

\begin{lemma}[Bounds on the flat norm of the semigroups]\ 

\label{TVSemigroup}
For any $\kappa\in\mathcal{M}^+(U)$ and $t\in[0,T]$, we have 
\begin{align}
    \Vert \mathcal{S}_t^1\kappa\Vert_{BL^*}=\Vert\kappa\Vert_{BL^*}.
\end{align}
Further, for any $R>0$ and $\kappa\in\mathcal{M}^+(U)$ such that $\Vert \kappa \Vert_{BL^*}\leq R$ we have
\begin{align}
    \Vert\mathcal{S}_t^2\kappa\Vert_{BL^*} \leq C 
\end{align}
\end{lemma}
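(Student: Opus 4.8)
The plan is to prove the two assertions separately, using the test–function representations of $\mathcal S^1$ and $\mathcal S^2$ derived in the proof of Lemma~\ref{LipschitzSemigroups}, together with the identity $\Vert\mu\Vert_{BL^*}=\Vert\mu\Vert_{TV}=\mu(U)$ valid for $\mu\in\mathcal M^+(U)$ (Theorem~\ref{propertiesM+}). The key observation is that the constant function $\psi\equiv 1$ is admissible as a test function, since $\Vert 1\Vert_{BL}=1$, so inserting it into any of the integral identities produces exactly the total mass of the measure on the left.

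For $\mathcal S^1$ this is immediate: $\mathcal S_t^1\kappa=X(t,\cdot)_\#\kappa$ is the push-forward of a non-negative measure, hence its total mass is unchanged, $\big(X(t,\cdot)_\#\kappa\big)(U)=\kappa\big(X(t,\cdot)^{-1}(U)\big)=\kappa(U)$; equivalently, inserting $\psi\equiv 1$ in \eqref{integrals1} gives $\int_U 1\,\mathrm{d}\mathcal S_t^1\kappa=\int_U 1\,\mathrm{d}\kappa$. Therefore $\Vert\mathcal S_t^1\kappa\Vert_{BL^*}=\mathcal S_t^1\kappa(U)=\kappa(U)=\Vert\kappa\Vert_{BL^*}$.

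For $\mathcal S^2$ I would set $m(t):=\Vert\mathcal S_t^2\kappa\Vert_{BL^*}=\mathcal S_t^2\kappa(U)$ and insert $\psi\equiv 1$ into \eqref{integrals2}, obtaining
\begin{align*}
  m(t)&=\int_U e^{tc(x)}\,\mathrm{d}\kappa(x)
  +\int_0^t\!\int_U\!\int_U e^{(t-\tau)c(y)}\,\mathrm{d}\hat\eta(x)(y)\,\mathrm{d}[\mathcal S_\tau^2\kappa](x)\,\mathrm{d}\tau\\
  &\quad+\int_0^t\!\int_U e^{(t-\tau)c(y)}\,\mathrm{d}\hat N(y)\,\mathrm{d}\tau.
\end{align*}
Since $c$ is uniformly bounded and $\hat\eta,\hat N$ have uniformly bounded flat norms (Assumption~\ref{assumptionsOnModelFunctions}\eqref{assumptionsSupremum}, which for $\hat\eta,\hat N$ was checked in Theorem~\ref{Theorem:finiteRangeApproximation} in the linear case and is assumed in the nonlinear case), I would bound $e^{(t-\tau)c(\cdot)}\le e^{T\Vert c\Vert_\infty}$, use $\hat\eta(x)(U)=\Vert\hat\eta(x)\Vert_{BL^*}$ and $\hat N(U)=\Vert\hat N\Vert_{BL^*}$, and arrive at the Grönwall-type inequality
\begin{align*}
  m(t)&\le e^{T\Vert c\Vert_\infty}\Big(\Vert\kappa\Vert_{BL^*}+T\,\Vert\hat N\Vert_{BL^*}\Big)\\
  &\quad+e^{T\Vert c\Vert_\infty}\Big(\sup_{x\in U}\Vert\hat\eta(x)\Vert_{BL^*}\Big)\int_0^t m(\tau)\,\mathrm{d}\tau.
\end{align*}
Grönwall's lemma then yields $m(t)\le C$ on $[0,T]$ with a constant $C$ depending only on $T$, on the bound $R\ge\Vert\kappa\Vert_{BL^*}$, and on the uniform norms of $c$, $\hat N$, $\hat\eta$, which is the claimed bound.

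The one step requiring care is the applicability of Grönwall's lemma, which presupposes that $t\mapsto m(t)$ is finite (and, say, continuous) on $[0,T]$. This is where I would invoke Theorem~\ref{Thm:solution}: $(\mathcal S_t^2\kappa)_{t\in[0,T]}$ is a solution of the autonomous model \eqref{s2} and hence narrowly continuous, so testing against the bounded continuous function $1$ shows $t\mapsto m(t)$ is continuous, hence finite on the compact interval, and the integral inequality may be iterated. (Alternatively, the representation exhibits $m$ as the solution of a linear Volterra equation with bounded non-negative kernel, from which the a priori bound follows directly.) I do not anticipate any further obstacle; the remaining work is the routine bookkeeping of constants indicated above.
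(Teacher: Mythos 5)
Your proof is correct, and for the first claim it coincides with the paper's argument: both verify $\Vert \mathcal S^1_t\kappa\Vert_{BL^*}\le\Vert\kappa\Vert_{BL^*}$ (or directly that the push-forward preserves total mass) and then use $\psi\equiv 1$ together with the identity $\Vert\cdot\Vert_{BL^*}=\Vert\cdot\Vert_{TV}$ on $\mathcal M^+(U)$ from Theorem \ref{propertiesM+} to get equality. For the second claim your route differs from the paper's: the paper simply regards $(\mathcal S^2_t\kappa)_{t\in[0,T]}$ as a solution of \eqref{eq:mod_pushforward} with identity transport and concludes boundedness from narrow continuity of solutions on the compact interval (equivalently, one could cite the uniform total-variation bound \eqref{uniformBoundTV} of Theorem \ref{Thm:solution}), whereas you extract a quantitative bound directly from the representation \eqref{integrals2} with $\psi\equiv 1$, bound the exponentials by $e^{T\Vert c\Vert_\infty}$ and $\hat\eta,\hat N$ by their uniform flat norms, and close with Grönwall, using the solution theory only to guarantee that $t\mapsto\Vert\mathcal S^2_t\kappa\Vert_{BL^*}$ is continuous, hence finite, so the integral inequality may be iterated. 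Your version is slightly longer but has the advantage of making explicit that the constant $C$ depends only on $T$, $R$ and the uniform norms of $c$, $\hat\eta$, $\hat N$, i.e. it is manifestly uniform over $\kappa\in B_R$, a point the paper's compactness argument leaves implicit (it is recovered there via the Lipschitz dependence on initial data in Theorem \ref{Thm:solution}); the paper's version in turn avoids the bookkeeping by delegating everything to the well-posedness theory. Both are valid under the same hypotheses, so no changes are needed.
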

\begin{proof}
    Let $\psi\in BL(U)$ with $\Vert \psi\Vert_{BL}\leq 1$ and let $t\in[0,T]$. We have
    \begin{align*}
        &\int\limits_U \psi(x)\mathrm{d}[\mathcal{S}_t^1\kappa](x)
        = \int\limits_U \psi\big(X(t,x)\big)\mathrm{d}\kappa(x)
        \leq \int\limits_U \mathrm{d}\kappa(x) 
        \leq\Vert \kappa\Vert_{BL^*}
        \end{align*}
         and thus, by taking the supremum over all test functions\\
        \begin{align*}
        & \Vert\mathcal{S}_t^1\kappa\Vert_{BL^*}\leq\Vert \kappa\Vert_{BL^*}    .    
    \end{align*}
    
    \noindent By using the test function $\psi(x) =1$ we obtain
    \begin{align*}
         &\int\limits_U \psi(x)\mathrm{d}[\mathcal{S}_t^1\kappa](x)
         =\int\limits_U \psi\big(X(t,x)\big)\mathrm{d}\kappa(x) =\Vert \kappa\Vert_{TV}.
    \end{align*}
    Combining the above estimates, we obtain the desired equality using the properties of the norms on $\mathcal{M}^+(U)$, see Theorem \ref{propertiesM+}.\\
    For $\mathcal{S}^2$, we can think of $\mathcal{S}_t^2\mu$ as a solution to the model \eqref{eq:mod_pushforward} with the identity as the transport function. As the solutions are narrowly continuous, i.e. continuous w.r.t. the flat norm, and $[0,T]$ is compact, we obtain that
    \begin{align*}
        \sup_{t\in[0,T]} \Vert \kappa(t)\Vert_{BL^*}\leq C.
    \end{align*}
\end{proof}

Now that we established the properties of the solutions to the subproblems, our goal is to show that iteratively solving \eqref{s1} and \eqref{s2} is in fact a good approximation of the exact solution to \eqref{eq:mod_pushforward}. To this end, we need to estimate the lack of commutativity \eqref{lackOfCommutivity} for the semigroups $\mathcal{S}^1$ and $\mathcal{S}^2$.\\
\begin{theorem}
    For any fixed $ \hat \kappa\in\mathcal{M}^+(U)$, $R>0$ and $t\in[0,T]$, it holds that
    \begin{align}
      \rho_F\big(\mathcal{S}^1_t\mathcal{S}^2_t\kappa,\mathcal{S}^2_t\mathcal{S}^1_t\kappa\big)\leq Ct^2 \text{ for all } \kappa\in B_R(\hat\kappa), 
    \end{align}
    where $C=C(R,T,\hat\kappa)$.
\end{theorem}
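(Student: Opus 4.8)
The plan is to work with test functions $\psi \in BL(U)$, $\|\psi\|_{BL} \le 1$, compute $\int_U \psi \, \mathrm{d}[\mathcal{S}^1_t \mathcal{S}^2_t \kappa - \mathcal{S}^2_t \mathcal{S}^1_t \kappa]$ using the explicit formulas \eqref{integrals1} and \eqref{integrals2}, and show the difference is $O(t^2)$ uniformly for $\kappa \in B_R(\hat\kappa)$, then take the supremum over $\psi$. First I would expand $\int_U \psi \,\mathrm{d}[\mathcal{S}^1_t \mathcal{S}^2_t \kappa]$: applying \eqref{integrals1} with the measure $\mathcal{S}^2_t\kappa$ gives $\int_U \psi(X(t,x)) \,\mathrm{d}[\mathcal{S}^2_t\kappa](x)$, and then applying \eqref{integrals2} with test function $\psi \circ X(t,\cdot)$ (whose $BL$-norm is bounded by $L_X(t) \le e^{ct}$) produces three terms. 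Similarly, $\int_U \psi \,\mathrm{d}[\mathcal{S}^2_t \mathcal{S}^1_t \kappa]$ expands via \eqref{integrals2} with measure $\mathcal{S}^1_t\kappa$ into three terms, and each inner integral against $\mathcal{S}^1_t\kappa$ or $\mathcal{S}^2_\tau \mathcal{S}^1_t\kappa$ is further rewritten using \eqref{integrals1}.

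The core of the argument is then a term-by-term comparison. The leading ("transport $\times$ growth") terms are $\int_U \psi(X(t,x)) e^{tc(X(t,x))} \,\mathrm{d}\kappa(x)$ from the first expansion versus $\int_U \psi(X(t,x)) e^{tc(x)} \,\mathrm{d}\kappa(x)$ from the second; their difference is controlled by $|e^{tc(X(t,x))} - e^{tc(x)}| \le C t\, |c(X(t,x)) - c(x)| \le C t \cdot \mathrm{Lip}(c)\, d(X(t,x),x) \le C t \cdot \omega_X(t) \le C t^2$, using Assumption \ref{assumptionsOnModelFunctions} and $\omega_X(t) \le Ct$. For the $\hat\eta$- and $\hat N$-integral terms, each already carries a prefactor $\int_0^t \cdots \mathrm{d}\tau$, so it suffices to show the two integrands differ by $O(t)$ (giving $O(t^2)$ after integration in $\tau$): one compares the extra $X(t,\cdot)$ composition and the extra factor $e^{(t-\tau)(\cdots)}$ appearing in one version but not the other, each of which costs an $O(t)$ factor because $d(X(t,y),y) \le Ct$, $|e^{(t-\tau)c(X(t,y))} - e^{(t-\tau)c(y)}| \le Ct$, and (for the $\hat\eta$ term) $\rho_F(\mathcal{S}^2_\tau \mathcal{S}^1_t\kappa, \mathcal{S}^2_\tau\kappa) \le e^{C\tau}\rho_F(\mathcal{S}^1_t\kappa,\kappa) \le C t$ by Lemma \ref{LipschitzSemigroups}. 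Throughout, the uniform boundedness of $\|\hat\eta(x)\|_{BL^*}$, $\|\hat N\|_{BL^*}$ (Assumption \ref{assumptionsOnModelFunctions}) and the bounds $\|\mathcal{S}^i_t\kappa\|_{BL^*} \le C$ from Lemma \ref{TVSemigroup} keep all the $\kappa$-dependent masses under control for $\kappa \in B_R(\hat\kappa)$, so the constant $C = C(R,T,\hat\kappa)$ is uniform.

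I expect the main obstacle to be organizing the $\hat\eta$-term cleanly: it is defined implicitly through $\mathcal{S}^2_\tau$ applied to different starting measures in the two orderings, so matching it requires first replacing $\mathcal{S}^2_\tau\mathcal{S}^1_t\kappa$ by $\mathcal{S}^1_t\mathcal{S}^2_\tau\kappa$ or by $\mathcal{S}^2_\tau\kappa$ at the cost of an $O(t)$ (resp.\ $O(t)$) error via the exponential-Lipschitz estimates, and being careful that these nested substitutions do not secretly cost a full extra power of $t$ beyond what the $\int_0^t \mathrm{d}\tau$ already supplies. A clean way to handle this is to bound $\rho_F(\mathcal{S}^1_t\mathcal{S}^2_t\kappa,\mathcal{S}^2_t\mathcal{S}^1_t\kappa)$ by first estimating $\rho_F(\mathcal{S}^1_t\mathcal{S}^2_t\kappa, \mathcal{S}^2_t\kappa + t(\text{generator terms}))$ and the symmetric quantity — i.e.\ comparing both compositions to a common first-order expansion of the full semigroup — so that the $O(t^2)$ remainder emerges from Taylor-type estimates on $\tau \mapsto (\text{integrand})$ rather than from a direct difference of two messy triple integrals. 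Everything else is a routine application of the scaling properties of $\rho_F$ (Lemma \ref{scalingFlatNorm}), the Hölder/Lipschitz assumptions on the model functions, and the semigroup bounds already proved.
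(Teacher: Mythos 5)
Your proposal is correct and follows essentially the same route as the paper: expand both compositions against a test function via the weak formulation, compare the transport--growth terms using $|e^{tc(X(t,x))}-e^{tc(x)}|\le Ct\,\omega_X(t)$, and handle the $\hat\eta$- and $\hat N$-terms by noting the $\int_0^t\mathrm{d}\tau$ prefactor, the bound $d(X(t,y),y)\le Ct$, and $\rho_F(\mathcal{S}^2_\tau\mathcal{S}^1_t\kappa,\mathcal{S}^2_\tau\kappa)\le e^{C\tau}\rho_F(\mathcal{S}^1_t\kappa,\kappa)\le Ct$, with the mass bounds from Lemma \ref{TVSemigroup} giving uniformity in $\kappa\in B_R(\hat\kappa)$. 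The only minor difference is that in the paper's expansions the exponential factors in the $\hat\eta$- and $\hat N$-terms actually coincide, so the exponent comparison you anticipate there is not needed (though your estimate for it is valid), and the alternative ``common first-order expansion'' you sketch at the end is not required.
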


\begin{proof}
For fixed $\psi\in BL(U)$ with $\Vert \psi\Vert_{BL}\leq 1$ and $t\in[0,T]$ we can write using Lemma \ref{Lemmma:weakform}
\begin{align}
\begin{split}
\label{integrals2s1}
    &\int\limits_U \psi(x)\mathrm{d}[\mathcal{S}_t^2(\mathcal{S}_t^1\kappa)](x)\\
    &= \int\limits_U \psi(x)e^{tc(x)}\mathrm{d}[\mathcal{S}^1_t\kappa](x)
    + \int\limits_0^t \int\limits_U\int\limits_U \psi(y) e^{(t-\tau)c(y)}\mathrm{d}\hat\eta(x)(y)\mathrm{d}[\mathcal{S}^2_\tau\mathcal{S}^1_t\kappa](x)\mathrm{d}\tau
    \\
    &\quad+\int\limits_0^t\int\limits_U\psi(y)e^{(t-\tau)c(y)}\mathrm{d}\hat N(y)\mathrm{d}\tau\\
    &= \int\limits_U \psi\big(X(t,x)\big)e^{tc(X(t,x))}\mathrm{d}\kappa(x)
    + \int\limits_0^t \int\limits_U\int\limits_U \psi(y) e^{(t-\tau)c(y)}\mathrm{d}[\hat\eta(x)](y)\mathrm{d}[\mathcal{S}^2_\tau\mathcal{S}^1_t\kappa](x)\mathrm{d}\tau\\
    &+\int\limits_0^t\int\limits_U\psi(y)e^{(t-\tau)c(y)} \mathrm{d}\hat N(y)\mathrm{d}\tau\\
    &=: A+B+C
    \end{split}
\end{align}
and 
\begin{align}
\begin{split}
\label{integrals1s2}
    &\int\limits_U \psi(x)\mathrm{d}[\mathcal{S}_t^1(\mathcal{S}_t^2 \kappa)](x) 
    = \int\limits_U \psi\big(X(t,x)\big)\mathrm{d}[\mathcal{S}^2_t\kappa](x)\\
    &= \int\limits_U \psi\big(X(t,x)\big)e^{tc(x)}\mathrm{d}\kappa(x)
    + \int\limits_0^t \int\limits_U\int\limits_U \psi\big(X(t,y)\big) e^{(t-\tau)c(y)}\mathrm{d}\hat\eta(x)(y)\mathrm{d}[\mathcal{S}^2_\tau\kappa](x)\mathrm{d}\tau\\
    &\quad+\int\limits_0^t\int\limits_U\psi\big(X(t,y)\big)e^{(t-\tau)c(y)} \mathrm{d}\hat N(y)\mathrm{d}\tau\\
    &=:\tilde{A}+\tilde{B}+\tilde{C}.
    \end{split}
\end{align}
Now, we compare similar terms one by one. For the first term, we use  that $\Vert\psi\Vert_\infty\leq \Vert \psi\Vert_{BL}\leq 1$ and Lemma \ref{boundednessZeta}:
    \begin{align}
    \label{eq:AvsAtilde}
    \begin{split}
        \vert A-\tilde A\vert
       &=\left\vert \int\limits_U \psi\big(X(t,x)\big)\left(e^{t c(X(r,x))}-e^{tc(x)}\right)\mathrm{d}\kappa(x)\right\vert\\ 
        &\leq  \int\limits_U \left\vert e^{tc(X(r,x))}-e^{tc(x)}\right\vert \mathrm{d}\kappa(x)\\
        &\leq e^{TC}\int\limits_U t \left\vert c(X(t,x))-c(x)\right\vert \mathrm{d} \kappa(x)\\
        &\leq  e^{TC}\int\limits_U  t \Vert c\Vert_{BL} d( X(t,x),x) \mathrm{d}\kappa(x)\\
        &\leq C \kappa(U)\cdot t\cdot \omega_X(t) \\
        &\leq C (\hat\kappa(U)+R)\cdot t^2,
        \end{split}
    \end{align}
    where we used the Lipschitz continuity of $c$ and $X$ (Assumption \ref{assumptionsOnModelFunctions}\eqref{assumptionsSupremum} and \eqref{assumptionsXLipschitzTX}) as well as the assumption that $\omega_X(s)\leq Cs$ (Assumption \ref{assumptionsOnModelFunctions}\eqref{assumptionsXLipschitzTX}). We further used that 
    \begin{align}
        \kappa(U)=\Vert \kappa\Vert_{BL^*}\leq \Vert \hat \kappa-\kappa\Vert_{BL^*}+\Vert \hat\kappa\Vert_{BL^*}\leq R+ \Vert \hat \kappa\Vert_{BL^*}.
    \end{align}
    The remaining terms $\vert B-\tilde B\vert$ and  $\vert C-\tilde C\vert$ are estimated similarly to \eqref{eq:AvsAtilde} using Lemmas \ref{LipschitzSemigroups}, \ref{TVSemigroup} and Assumptions \ref{assumptionsOnModelFunctions} \eqref{assumptionsSupremum} and \eqref{assumptionsXLipschitzTX}.
    
    Overall, we obtain
    \begin{align}
        \rho_F\big(\mathcal{S}^1_t\mathcal{S}\kappa,\mathcal{S}^2_t\mathcal{S}^1_t\kappa\big)\leq \vert A-\tilde A\vert+ \vert B-\tilde B\vert +\vert C-\tilde C\vert\leq Ct^2.
    \end{align}
    
\end{proof}

Now that we have shown that the commutator estimate \eqref{lackOfCommutivity} holds for the semigroups $\mathcal S^1$ and $\mathcal S^2$, Theorem \ref{operatorSplittingConverges} yields the following corollary:
\begin{corollary}
\label{semigroupSPMCOnverges}
    The operator splitting scheme applied to the semigroups $\mathcal{S}^1$ and $\mathcal{S}^2$ converges uniformly in time to a limit $\Sigma$, i.e.
    \begin{align}
       \lim_{n\rightarrow \infty} \sup_{t\in[0,T]}  \rho_F\left(\Sigma_t^{2,1,\varepsilon_n} u,\Sigma_t u\right)=0.
    \end{align}
    $\Sigma$ is also an exponentially continuous semigroup.
\end{corollary}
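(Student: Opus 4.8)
The plan is to read this off Theorem~\ref{operatorSplittingConverges} directly, taking the abstract metric space there to be $(\mathcal{M}^+(U),\rho_F)$ and the two abstract semigroups to be $\mathcal{T}^1=\mathcal{S}^1$ and $\mathcal{T}^2=\mathcal{S}^2$. So the entire task reduces to checking that the hypotheses of that theorem are in force, after which its conclusion transfers verbatim.

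First I would confirm that $\mathcal{S}^1$ and $\mathcal{S}^2$ are exponentially Lipschitz semigroups in the required sense. The algebraic semigroup identities $\mathcal{S}^i_0=\mathrm{Id}$ and $\mathcal{S}^i_t\circ\mathcal{S}^i_s=\mathcal{S}^i_{t+s}$ follow from Theorem~\ref{Thm:solution}, as already observed at the opening of the proof of Lemma~\ref{LipschitzSemigroups} (this is where autonomy of the model functions in this section is used, reducing two-parameter to one-parameter semigroups). Property (2) in the definition, namely $\rho_F(\mathcal{S}^i_t\kappa,\kappa)\le K^i_R\,t$ for $\kappa$ ranging over a ball, is exactly the second assertion of Lemma~\ref{LipschitzSemigroups}, and the global contraction-type bound $\rho_F(\mathcal{S}^i_t\kappa,\mathcal{S}^i_t\nu)\le e^{C_i t}\rho_F(\kappa,\nu)$ — property (3) — is its first assertion. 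Next I would invoke the theorem immediately preceding this corollary, which supplies precisely the lack-of-commutativity bound \eqref{lackOfCommutivity}: for every fixed $\hat\kappa\in\mathcal{M}^+(U)$, every $R>0$ and every $T>0$ there is $C=C(R,T,\hat\kappa)$ with $\rho_F\big(\mathcal{S}^1_t\mathcal{S}^2_t\kappa,\mathcal{S}^2_t\mathcal{S}^1_t\kappa\big)\le C t^2$ for all $\kappa\in B_R(\hat\kappa)$ and $t\in[0,T]$. This is exactly the estimate \eqref{lackOfCommutivity} demanded of $\mathcal{T}^1,\mathcal{T}^2$.

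With all hypotheses verified, Theorem~\ref{operatorSplittingConverges} applies and gives the statement: for any fixed $\varepsilon>0$ and any $u\in\mathcal{M}^+(U)$ the sequence $\Sigma^{2,1,\varepsilon_n}_t u$ with $\varepsilon_n=\varepsilon\cdot 2^{-n}$ converges pointwise, the convergence is uniform in $t\in[0,T]$ (which is the displayed limit), and the limiting object $\Sigma_t$ is again an exponentially Lipschitz (hence exponentially continuous) semigroup, with constants $K=K^1+K^2$ and $C=C_1+C_2$. I do not expect any genuine obstacle: the one point worth a sentence of care is that the estimates in Lemma~\ref{LipschitzSemigroups}(2) and in the commutator theorem are stated in the ball-localized form $\kappa\in B_R(\hat\kappa)$, which is precisely the form the abstract theorem requires, so no globalization, tightness, or compactness argument is needed here — the corollary is the bookkeeping step that assembles the preceding lemmas. (Identifying $\Sigma$ with the solution operator of the full model \eqref{eq:mod_pushforward} is a separate matter, carried out in the remainder of the section, and is not claimed here.)
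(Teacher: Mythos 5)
Your proposal is correct and matches the paper's reasoning: the corollary is stated there as an immediate consequence of Theorem \ref{operatorSplittingConverges}, whose hypotheses are supplied exactly as you describe by Lemma \ref{LipschitzSemigroups} (exponential Lipschitz semigroup properties of $\mathcal{S}^1,\mathcal{S}^2$) and the preceding commutator estimate $\rho_F\big(\mathcal{S}^1_t\mathcal{S}^2_t\kappa,\mathcal{S}^2_t\mathcal{S}^1_t\kappa\big)\leq Ct^2$. No further argument is needed, and your remark that identifying $\Sigma$ with the solution operator is deferred to the subsequent theorem is also consistent with the paper.
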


\begin{theorem}
    The limit $\Sigma$ of the semigroup splitting method is a solution operator to the structured population model \eqref{eq:mod_pushforward}, i.e. for $\mu(0)\in \mathcal{M}^+(U)$ the family of measures $\left(\Sigma_t\mu(0)\right)_{t\in[0,T]}$ solves \eqref{eq:mod_pushforward} with model functions $\hat\eta$ and $\hat N$.
\end{theorem}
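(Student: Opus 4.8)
The plan is to identify the limit $\Sigma$ with the unique solution semigroup of \eqref{eq:mod_pushforward}, rather than to verify the implicit push-forward identity for $\Sigma$ directly. For the autonomous, linear model functions $c,\hat\eta,\hat N$ fixed throughout this section, Theorem \ref{Thm:solution} provides a unique solution of \eqref{eq:mod_pushforward} for every initial datum, and since the model functions do not depend on time this defines a one-parameter semigroup $\mathcal{S}=(\mathcal{S}_t)_{t\in[0,T]}$ on $(\mathcal{M}^+(U),\rho_F)$; solutions are Lipschitz continuous in time (Theorem \ref{Thm:solution}), exactly as was used for $\mathcal{S}^2$ in the proof of Lemma \ref{LipschitzSemigroups}. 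It therefore suffices to show $\Sigma_t=\mathcal{S}_t$ for all $t\in[0,T]$, since $(\mathcal{S}_t\mu(0))_{t\in[0,T]}$ is by construction a solution of \eqref{eq:mod_pushforward} with model functions $\hat\eta$ and $\hat N$.

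The heart of the proof is a one-step consistency bound: for every $R>0$ there is a constant $C=C(R,T)$ such that
\begin{align}
\rho_F\big(\mathcal{S}_t\kappa,\mathcal{S}^2_t\mathcal{S}^1_t\kappa\big)\le Ct^2\qquad\text{for all }\kappa\in B_R(0)\text{ and }t\in[0,T].
\end{align}
This is established in the same spirit as the commutator bound \eqref{lackOfCommutivity} proved above: one writes $\int_U\psi\,\mathrm{d}[\mathcal{S}^2_t\mathcal{S}^1_t\kappa]$ via \eqref{integrals2s1} and $\int_U\psi\,\mathrm{d}[\mathcal{S}_t\kappa]$ via the weak form of \eqref{eq:mod_pushforward} (Lemma \ref{Lemmma:weakform}, equation \eqref{eq:Weakform}) for $\psi\in BL(U)$ with $\Vert\psi\Vert_{BL}\le1$, and compares the corresponding terms. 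Three discrepancies appear: the exponential growth weight $e^{\int_0^t c(X(s,\cdot))\,\mathrm{d}s}$ of the true solution against $e^{tc(X(t,\cdot))}$ of the split one, which is $O(t^2)$ because $\big|\int_0^t c(X(s,x))\,\mathrm{d}s-tc(X(t,x))\big|\le\Vert c\Vert_{BL}\int_0^t\omega_X(t-s)\,\mathrm{d}s\le Ct^2$; the transport of the mass created by $\hat\eta$ and $\hat N$, which the true solution pushes forward by $X(t-\tau,\cdot)$ while $\mathcal{S}^2_t\mathcal{S}^1_t$ leaves it in place, a discrepancy of size $\omega_X(t)=O(t)$ carried by a total created mass of order $O(t)$; and the replacement of the measure $\mathcal{S}_\tau\kappa$ by $\mathcal{S}^2_\tau\mathcal{S}^1_t\kappa$ in the $\hat\eta$-integral, which costs $C\int_0^t\rho_F(\mathcal{S}_\tau\kappa,\mathcal{S}^2_\tau\mathcal{S}^1_t\kappa)\,\mathrm{d}\tau=O(t^2)$ since each of $\rho_F(\mathcal{S}_\tau\kappa,\kappa)$, $\rho_F(\mathcal{S}^1_t\kappa,\kappa)$ and $\rho_F(\mathcal{S}^2_\tau\mathcal{S}^1_t\kappa,\mathcal{S}^1_t\kappa)$ is $O(t)$. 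All constants depend only on $R$, $T$ and the norms of $c$, $\hat\eta$, $\hat N$, $X$, using Assumption \ref{assumptionsOnModelFunctions}, $\omega_X(t)\le Ct$, and the flat-norm bounds of Lemmas \ref{LipschitzSemigroups} and \ref{TVSemigroup}. This term-by-term matching is the main obstacle: it requires verifying that the leading-order-in-$t$ parts of the true push-forward solution and the composed semigroup agree, and that every remaining term is genuinely quadratic in $t$.

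Given the consistency bound, we conclude by a telescoping argument along the splitting sequence $\varepsilon_n=\varepsilon\cdot2^{-n}$ from Corollary \ref{semigroupSPMCOnverges}. Fix $\mu(0)\in\mathcal{M}^+(U)$ and $t\in[0,T]$, put $R:=\sup_{s\in[0,T]}\Vert\mathcal{S}_s\mu(0)\Vert_{BL^*}<\infty$ (finite by narrow continuity of the orbit), set $m=m(n)=\lfloor t/\varepsilon_n\rfloor$ so that $t-m\varepsilon_n\in[0,\varepsilon_n)$, and recall $\Sigma^{2,1,\varepsilon_n}_t\mu(0)=\mathcal{S}^1_{t-m\varepsilon_n}\big(\mathop{\bigcirc}\limits_{m}\mathcal{S}^2_{\varepsilon_n}\mathcal{S}^1_{\varepsilon_n}\big)\mu(0)$. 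Since $\mathcal{S}^2_{\varepsilon_n}\mathcal{S}^1_{\varepsilon_n}$ is Lipschitz with constant $e^{(C_1+C_2)\varepsilon_n}$ (Lemma \ref{LipschitzSemigroups}), the measures $\mathcal{S}_{j\varepsilon_n}\mu(0)$ lie in $B_R(0)$ for $j=0,\dots,m-1$, and $\mathcal{S}_{m\varepsilon_n}\mu(0)=\big(\mathop{\bigcirc}\limits_{m}\mathcal{S}_{\varepsilon_n}\big)\mu(0)$, telescoping together with the consistency bound applied at each $\mathcal{S}_{j\varepsilon_n}\mu(0)\in B_R(0)$ gives
\begin{align}
\rho_F\Big(\big(\mathop{\bigcirc}\limits_{m}\mathcal{S}^2_{\varepsilon_n}\mathcal{S}^1_{\varepsilon_n}\big)\mu(0),\ \mathcal{S}_{m\varepsilon_n}\mu(0)\Big)
&\le\sum_{j=0}^{m-1}e^{(C_1+C_2)T}\,\rho_F\big(\mathcal{S}^2_{\varepsilon_n}\mathcal{S}^1_{\varepsilon_n}\mathcal{S}_{j\varepsilon_n}\mu(0),\ \mathcal{S}_{\varepsilon_n}\mathcal{S}_{j\varepsilon_n}\mu(0)\big)\\
&\le m\,e^{(C_1+C_2)T}C\varepsilon_n^2\le CTe^{(C_1+C_2)T}\varepsilon_n.
\end{align}
Applying $\mathcal{S}^1_{t-m\varepsilon_n}$ (which is Lipschitz and satisfies $\rho_F(\mathcal{S}^1_s\nu,\nu)\le K_R s$ for $\nu\in B_R(0)$ by Lemma \ref{LipschitzSemigroups}), using $t-m\varepsilon_n<\varepsilon_n\to0$ and the narrow continuity of $s\mapsto\mathcal{S}_s\mu(0)$ at $s=t$ (since $m\varepsilon_n\to t$), we obtain $\Sigma^{2,1,\varepsilon_n}_t\mu(0)\to\mathcal{S}_t\mu(0)$ as $n\to\infty$. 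On the other hand $\Sigma^{2,1,\varepsilon_n}_t\mu(0)\to\Sigma_t\mu(0)$ by Corollary \ref{semigroupSPMCOnverges}, hence $\Sigma_t\mu(0)=\mathcal{S}_t\mu(0)$ for every $t\in[0,T]$, so $(\Sigma_t\mu(0))_{t\in[0,T]}$ solves \eqref{eq:mod_pushforward} with model functions $\hat\eta$ and $\hat N$.
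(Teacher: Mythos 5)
Your proposal is correct and follows essentially the same route as the paper: a one-step $O(h^2)$ consistency estimate obtained by comparing the weak formulation \eqref{eq:Weakform} of the exact (autonomous, linear) solution with the composed split semigroup term by term, followed by telescoping with the exponential Lipschitz bounds of Lemma \ref{LipschitzSemigroups} and passing to the limit via the uniform convergence of Theorem \ref{operatorSplittingConverges}/Corollary \ref{semigroupSPMCOnverges}. The only deviations are cosmetic (you compare against $\mathcal{S}^2_t\mathcal{S}^1_t$ rather than the paper's $\mathcal{S}^1_t\mathcal{S}^2_t$, which is immaterial since both orderings share the limit $\Sigma$, and you use a direct fan/telescoping argument and an $\mathcal{S}^1$-correction for non-grid times where the paper invokes Lemma \ref{iterationLemma} and Lipschitz-in-time of $\Sigma^{(n)}$).
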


\begin{proof}

    Let $\hat\mu(\cdot)$ be a solution to \eqref{eq:mod_pushforward} with the model functions $\hat N$ and $\hat\eta$.\\
    First, we will show that for $h<1$, the distance between $\hat \mu(h)$ and $\mathcal{S}_h^1\mathcal{S}_h^2\mu(0)$ in the flat metric is small.\\
    To this end, for a test function $\psi\in BL(U)$  with $\Vert \psi\Vert_{BL}\leq 1$, we bound the difference 
       \begin{align}
       \label{Eq:differenceSplitting}
        \int\limits_U \psi(x)\mathrm{d}\hat\mu_t(x)-\int\limits_U\psi(x)\mathrm d[\Sigma_t\mu(0)](x)
    \end{align}
    using the identities \eqref{eq:Weakform} and \eqref{integrals1s2} for the exact solution and the approximation by the semigroups respectively. By comparing similar terms one by one and using Assmuptions \ref{assumptionsOnModelFunctions} \eqref{assumptionsXLipschitzTX} and \eqref{assumptionsSupremum} as well as Lemma \ref{boundednessZeta} and Theorem \ref{Thm:solution}.
    
    We obtain that 
    \begin{align}
        &\int\limits_U \psi(x)\mathrm{d}[\mathcal{S}_h^1\mathcal{S}_h^2\mu(0)-\hat\mu(h)](x)\leq Ch^2
\end{align}
and thus
\begin{align}
        &\rho_F\big(\mathcal{S}_h^1\mathcal{S}_h^2\mu(0),\hat\mu(h)\big)
        = \sup\limits_{\Vert \psi\Vert_{BL}\leq 1} \int\limits \psi(x) \mathrm{d}[\mathcal{S}_h^1\mathcal{S}_h^2\mu(0)-\hat\mu(h)](x)
        \leq Ch^2.
    \end{align}
As the model functions are time independent, we obtain the same result for solutions starting in $t\in[0,T]$, i.e.
\begin{align}
\label{SemigroupError1step}
    \rho_F\big(\mathcal{S}_h^1\mathcal{S}_h^2\hat\mu(t),\hat\mu({t+h})\big)\leq Ch^2.
\end{align}
Next, we estimate the difference to $\hat\mu$ when we iteratively apply the semigroups $\mathcal{S}^1$ and $\mathcal{S}^2$. \\
We define $\varepsilon_n:=\varepsilon 2^{-n}$ and $t_{n,k}:=\frac{kT}{2^n}$ for $k=0,..,2^n-1$. Thus, we obtain for $\varepsilon_n$ sufficiently small:
\begin{align*}
    &\rho_F\big(\hat\mu({t_{n,k}}),\left(\mathcal{S}_{\varepsilon_n}^1\mathcal{S}_{\varepsilon_n}^2\right)^k\mu(0)\big)
    =\rho_F\big(\hat\mu({t_{n,k}}),\mathcal{S}_{\varepsilon_n}^1\mathcal{S}_{\varepsilon_n}^2\left(\mathcal{S}_{\varepsilon_n}^1\mathcal{S}_{\varepsilon_n}^2\right)^{k-1}\mu(0)\big)\\
    &\leq \rho_F\big(\hat\mu({t_{n,k}}),\mathcal{S}_{\varepsilon_n}^1\mathcal{S}_{\varepsilon_n}^2\hat\mu({t_{n,k-1}})\big)
    +\rho_F\big(\mathcal{S}_{\varepsilon_n}^1\mathcal{S}_{\varepsilon_n}^2\hat\mu({t_{n,k-1}}),\mathcal{S}_{\varepsilon_n}^1\mathcal{S}_{\varepsilon_n}^2\left(\mathcal{S}_{\varepsilon_k}^1\mathcal{S}_{\varepsilon_n}^2\right)^{k-1}\mu(0)\big)\\
    &\leq C(\varepsilon_n)^2+e^{(C_1+C_2)\varepsilon_n}\rho_F\big(\hat\mu({t_{n,k-1}}),\left(\mathcal{S}_{\varepsilon_n}^1\mathcal{S}_{\varepsilon_n}^2\right)^{k-1}\mu(0)\big),
\end{align*}
where we used estimate \eqref{SemigroupError1step} and the exponential Lipschitz regularity of the semigroups (Lemma \ref{LipschitzSemigroups}).\\
With an iteration argument (Lemma \ref{iterationLemma}) and using that at time $0$ the solutions coincide, i.e. $\mathcal{S}_0^1\mathcal{S}_0^2\mu(0)=\mu(0)$, we thus conclude that:
\begin{align}
\label{estimateOnInterval}
     &\rho_F\big(\hat\mu({t_{n,k}}),\left(\mathcal{S}_{\varepsilon_n}^1\mathcal{S}_{\varepsilon_n}^2\right)^k\mu(0)\big)
     \leq  C\frac{e^{k(C_1+C_2)\varepsilon_n}-1}{e^{(C_1+C_2)\varepsilon_n}-1}(\varepsilon_n)^2.
\end{align}
Note, that by definition, the identity $\Sigma_{t_{n,k}}^{(k)}=\left(\mathcal{S}_{\varepsilon_n}^1\mathcal{S}_{\varepsilon_n}^2\right)^k$ holds for all time points $t_{n,k}$.\\
For an arbitrary $t\in[0,T]$ we can use the Lipschitz continuity of the solutions to \eqref{eq:mod_pushforward} with respect to time (Theorem \ref{Thm:solution}). Let $k$ be such that $t\in[t_{n,k},t_{n,k+1}]$.
\begin{align}
\label{Eq:SplittingEstimateInterval}
    &\rho_F\big(\hat\mu(t),\Sigma_t^{(n)}\mu(0)\big) \\
    &\leq \rho_F\bigg(\hat\mu(t),\hat\mu({t_{n,k}})\bigg)+\rho_F\bigg(\hat\mu({t_{n,k}}),\Sigma_{t_{n,k}}^{(n)}\mu(0)\bigg)+\rho_F\big(\Sigma_{t_{n,k}}^{(n)}\mu(0),\Sigma_t^{(n)}\mu(0)\big)\\
    &\leq C\vert t-t_{n,k}\vert + C\frac{e^{k(C_1+C_2)\varepsilon_n}-1}{e^{(C_1+C_2)\varepsilon_n}-1}(\varepsilon_n)^2 +(K_1+K_2)\vert t-t_{n,k}\vert .
\end{align}
Here, we used \eqref{estimateOnInterval} and \cite[Proposition 3.2]{MR2050900}, which states that $\Sigma^{(n)}$ is Lipschitz continuous in time with constant $K_1+K_2$ where $K_1$ and $K_2$ are the Lipschitz constants of the semigroups $S^1$ and $S^2$ respectively.\\
As we know that $\sum\nolimits_t^{(n)}\mu(0)$ converges to $\sum\nolimits_t\mu(0)$ uniformly (see Theorem \ref{operatorSplittingConverges}), we obtain with \eqref{Eq:SplittingEstimateInterval}:
\begin{align*}
    &\rho_F\big(\hat\mu(t),\Sigma_t\mu(0)\big) 
    \leq \rho_F\big(\hat\mu(t),\Sigma_t^{(n)}\mu(0)\big) +\rho_F\big(\Sigma_t^{(n)}\mu(0),\Sigma_t\mu(0)\big) 
    \rightarrow 0\text{ as } n\rightarrow \infty.
\end{align*}
Hence,
\begin{align}
    \hat\mu(t)=\Sigma_t\mu(0) 
\end{align}
for all $t\in[0,T]$.
\end{proof}

\subsection{Convergence Analysis}
\label{convergence}

In this section, we analyze the error from the algorithm proposed in Section \ref{Sec:NumericalScheme}. We study the error that arises from splitting the problem instead of solving the transport and the growth/jump problems at the same time.\\

\begin{remark}
    As Theorem \ref{Theorem:finiteRangeApproximation} only provides an approximation of the model functions $\eta$ and $N$ in the linear case, we need to clarify the notation a bit before proceeding. Throughout this section we will write $\hat\eta(t,x,\mu)$ and $\hat N(t,\mu)$ in order to avoid having to different between the two cases. If $\eta$ and $N$ are linear, so are $\hat\eta$ and $\hat N$, and the dependence on $\mu$ is artificial. In the nonlinear case, we set $\hat\eta=\eta$ and $\hat N=N$. In both cases, some of the terms in the proof will vanish.
\end{remark}

For the analysis, we consider a semi-continuous auxiliary scheme: 
Given the value $\mu_k$ at the beginning of the time step starting at time $k\Delta t$, we first solve an equation that includes only the transport process:
 \begin{align}
 \label{justTransport1}
     \bar\mu(t) &= \bar{X}(t,\cdot)_\#\mu_k(\cdot) \text{ in } [k\Delta t,(k+1)\Delta t],
 \end{align}
where
\begin{align}
    \label{transportAuxiliary}
        &\bar{X}(t,\cdot)=X(t,k\Delta t,\cdot,{\mu}_{k}).
    \end{align}
We denote by $\bar{\mu}((k+1)\Delta t)$ the solution to \eqref{justTransport1} at time $(k+1)\Delta t$, which is used in the second equation.
Equation \eqref{justTransport1} describes the transport process and corresponds to the semigroup $\mathcal{S}^1\mu_k$ from Section \ref{semigroupSplitting}.\\
The second equation represents non-local interactions, influx and growth:
\begin{align}
     \label{grwothNonlocal1}
     \begin{split}
     \Breve{\mu}(t)
     &=\bar{\mu}((k+1)\Delta t)\exp\left(\int\limits_{\ k\Delta t}^t\Breve c(\cdot)\mathrm{d}s\right)
     \\
     &+\int\limits_{k\Delta t}^t \int\limits_U [\Breve{ \eta}(x)(\cdot)]\mathrm{d}\Breve\mu(\tau)(x) \exp\left(\int\limits_{\tau}^t\Breve{c}(\cdot)\mathrm{d}s\right)\mathrm{d}\tau
    + \int\limits_{k\Delta t}^t\Breve{N} (\cdot)\exp\left(\int\limits_{\tau}^t\Breve c(\cdot)\mathrm{d}s\right)\mathrm{d}\tau   .     
    \end{split}
 \end{align}
 This corresponds to applying the operator $\mathcal{S}^2$ from Section \ref{semigroupSplitting} to $\bar\mu((k+1)\Delta t)$.\\ 
 Here, we use the following functions:
 \begin{align*}
     &\Breve c(x) = c(k\Delta t,x,\bar{\mu}((k+1)\Delta t)),\\
        & \Breve{\eta}(x)(\cdot)= \hat\eta(k\Delta t, x,\bar\mu((k+1)\Delta t)),\\
        & \Breve N(\cdot) =  \hat N(k\Delta t,\bar\mu((k+1)\Delta t)).
 \end{align*}
 Note that these auxiliary models are linear and autonomous as we evaluate the model functions at fixed measures and time points.\\
\begin{table}
\label{Table:AuxSchemes}
\caption{Overview of the auxiliary schemes used in the proof of Lemma \ref{ErrorAlgorithm}. $\bar \mu$ and $\Breve\mu$ are used for the semigroup splitting, $\nu$ is the limit of this semigroup splitting procedure. $\lambda$ is used as a link between $\nu$ and the exact solution $\hat \mu$ by using the same model functions as $\nu$ but the inital value from $\hat\mu$.}
\begin{tabular}{ |c|c|c| } 
 \hline
Solution & Initial value & Model functions \\ 
\hline\hline & \\[-1.5ex]
 $\hat\mu$ & $\hat\mu(k\Delta t)$ & \begin{tabular}{@{}c@{}}$X(t,\tau,\cdot,\hat\mu(\cdot))$, $c(t,x,\hat\mu(t))$,\\ $\hat\eta(t,x,\hat\mu(t))(\cdot)$, $\hat N(t,\hat\mu(t))(\cdot)$\end{tabular}   \\ 
 \hline& \\[-1.5ex]
 $\bar\mu$ & $\mu_k$ & $\bar X(t,\cdot)=X(t,k\Delta t,\cdot,\mu_k)$ \\ 
 \hline & \\[-1.5ex]
 $\Breve{\mu}$ & $\bar\mu((k+1)\Delta t)$ & \begin{tabular}{@{}c@{}} $\Breve c(x) = c(k\Delta t,x,\bar{\mu}((k+1)\Delta t))$, $ \Breve{\eta}(x)(\cdot)= \hat\eta(k\Delta t, x,\bar\mu((k+1)\Delta t))(\cdot)$,\\ $\Breve N(\cdot) =  \hat N(k\Delta t,\bar\mu((k+1)\Delta t))(\cdot)$ \end{tabular}\\
 \hline& \\[-1.5ex]
 $\nu$ & $\mu_k$ & \begin{tabular}{@{}c@{}}$\bar{X}(t,\cdot)=X(t,k\Delta t,\cdot,{\mu}_{k})$, $\bar{c}(x) = c(k\Delta t,x,{\mu}_{k})$,\\ $ \bar{\eta}(x)(\cdot)= \hat \eta(k\Delta t,x,\mu_k)(\cdot)$, $ \bar{N}(\cdot) = \hat N(k\Delta t,\mu_k)(\cdot)$\end{tabular}\\
 \hline& \\[-1.5ex]
 $\lambda$& $\mu(k\Delta t)$ & $\bar X$, $\bar c$, $\bar \eta$, $\bar N$\\
 \hline
\end{tabular}
\end{table}
\begin{lemma}
    [Splitting error in one time step]\ 
    
\label{ErrorAlgorithm}
\noindent Denote by $\hat\mu(k\Delta t)$ the exact solution to \eqref{eq:mod_pushforward} at time $t\in [0,T]$ using the approximative model functions $\hat\eta $ and $\hat N$ obtained from Theorem \ref{Theorem:finiteRangeApproximation}. Further denote by $\Breve \mu_{k\Delta t}$ the solution as obtained by the auxiliary scheme given by \eqref{justTransport1} and \eqref{grwothNonlocal1}. Then, the splitting error is bounded by
\begin{align}
\label{errorBetweenReconstructions}
    \rho_F\big(\hat\mu(k\Delta t),\Breve \mu(k\Delta t)\big)
    &\leq C(e^{C\Delta t})\rho_F\big(\hat\mu((k-1)\Delta t),\Breve \mu((k-1)\Delta t)\big)+ C\left((\Delta t)^2+(\Delta t)^{1+\alpha})\right) .
\end{align}

\end{lemma}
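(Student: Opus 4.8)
Fix the step $[(k-1)\Delta t,k\Delta t]$ and let $w:=\Breve\mu((k-1)\Delta t)$ be the measure that enters it. The plan is to interpolate between $\hat\mu$ and $\Breve\mu$ through the auxiliary solutions $\lambda$ and $\nu$ of Table~\ref{Table:AuxSchemes} and to control each link separately. By the triangle inequality for $\rho_F$,
\begin{equation*}
\rho_F\big(\hat\mu(k\Delta t),\Breve\mu(k\Delta t)\big)\le
\rho_F\big(\hat\mu(k\Delta t),\lambda(k\Delta t)\big)+\rho_F\big(\lambda(k\Delta t),\nu(k\Delta t)\big)+\rho_F\big(\nu(k\Delta t),\Breve\mu(k\Delta t)\big),
\end{equation*}
where $\lambda$ and $\nu$ are solutions of \eqref{eq:mod_pushforward} with the \emph{frozen}, autonomous, linear model functions $\bar X,\bar c,\bar\eta,\bar N$, so that the semigroup machinery of Section~\ref{semigroupSplitting} applies to them verbatim; $\lambda$ starts from the exact value $\hat\mu((k-1)\Delta t)$, $\nu$ from $w$.

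\textbf{The amplification link.} Since $\lambda$ and $\nu$ solve the \emph{same} frozen model and differ only in the initial datum, the Lipschitz dependence of solutions on the initial value (Theorem~\ref{Thm:solution}), equivalently the exponential Lipschitz property of the splitting semigroup $\Sigma$ built from $\mathcal S^1$ and $\mathcal S^2$ (Lemma~\ref{LipschitzSemigroups} together with Theorem~\ref{operatorSplittingConverges}), gives
\begin{equation*}
\rho_F\big(\lambda(k\Delta t),\nu(k\Delta t)\big)\le e^{C\Delta t}\,\rho_F\big(\hat\mu((k-1)\Delta t),\Breve\mu((k-1)\Delta t)\big),
\end{equation*}
which is exactly the amplification term in \eqref{errorBetweenReconstructions}.

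\textbf{The splitting-consistency link.} By the identification of the splitting limit $\Sigma$ with the solution operator of the frozen model established in Section~\ref{semigroupSplitting}, $\nu(k\Delta t)=\Sigma_{\Delta t}w$, and \eqref{SemigroupError1step} shows that one iteration of the two frozen semigroups differs from $\Sigma_{\Delta t}w$ by at most $C(\Delta t)^2$, uniformly for $w$ in a fixed bounded ball (up to the $O((\Delta t)^2)$ commutator estimate of Section~\ref{semigroupSplitting}, the order of the transport and growth stages is immaterial). What remains is that the growth stage of $\Breve\mu$ evaluates its coefficients at the post-transport measure $\bar\mu(k\Delta t)$ rather than at $w$; but $\rho_F(\bar\mu(k\Delta t),w)\le C\Delta t$ by the second property in Lemma~\ref{LipschitzSemigroups}, and since $c,\hat\eta,\hat N$ are Lipschitz in the measure argument, the continuity of solutions of \eqref{eq:mod_pushforward} with respect to the model functions (Theorem~\ref{Thm:solution}) turns this mismatch into an additional $O((\Delta t)^2)$ error. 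Hence $\rho_F\big(\nu(k\Delta t),\Breve\mu(k\Delta t)\big)\le C(\Delta t)^2$.

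\textbf{The freezing link and the main obstacle.} The remaining term $\rho_F\big(\hat\mu(k\Delta t),\lambda(k\Delta t)\big)$ is the delicate one: $\hat\mu$ and $\lambda$ share the initial value $\hat\mu((k-1)\Delta t)$, but $\hat\mu$ carries the nonautonomous, solution-dependent functions $X(t,\tau,\cdot,\hat\mu(\cdot))$, $c(t,x,\hat\mu(t))$, $\hat\eta(t,x,\hat\mu(t))$, $\hat N(t,\hat\mu(t))$, whereas $\lambda$ has frozen them at time $(k-1)\Delta t$ and at the measure $w$. I would test $\rho_F$ against $\psi\in BL(U)$ with $\|\psi\|_{BL}\le 1$, subtract the weak formulations \eqref{eq:Weakform} of the two solutions, and split the difference term by term exactly as in the proof of the commutator estimate of Section~\ref{semigroupSplitting}, controlling the nested factors $\exp(\int c)$ and the compositions $c(X(s,\cdot))$ in the same way. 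The new contributions are: freezing the time argument of $c,\hat\eta,\hat N$, which by their $\alpha$-Hölder continuity in time (Assumption~\ref{assumptionsOnModelFunctions}\eqref{assumptionsOnModelFunctionsTime}), integrated over the step, costs $O((\Delta t)^{1+\alpha})$ — this is the sole source of the $(\Delta t)^{1+\alpha}$ term; freezing the time and $\tau$-arguments of $X$, which by $\omega_X(s)\le Cs$ costs $O((\Delta t)^2)$; and freezing the measure argument, for which
\begin{equation*}
\rho_F\big(\hat\mu(t),w\big)\le \rho_F\big(\hat\mu(t),\hat\mu((k-1)\Delta t)\big)+\rho_F\big(\hat\mu((k-1)\Delta t),\Breve\mu((k-1)\Delta t)\big)\le C\Delta t+\rho_F\big(\hat\mu((k-1)\Delta t),\Breve\mu((k-1)\Delta t)\big)
\end{equation*}
by Lipschitz-in-time of solutions (Theorem~\ref{Thm:solution}), so that the Lipschitz-in-measure dependence of the coefficients contributes $C\Delta t\big(C\Delta t+\rho_F(\hat\mu((k-1)\Delta t),\Breve\mu((k-1)\Delta t))\big)$. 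Collecting, $\rho_F\big(\hat\mu(k\Delta t),\lambda(k\Delta t)\big)\le C\big((\Delta t)^2+(\Delta t)^{1+\alpha}\big)+C\Delta t\,\rho_F\big(\hat\mu((k-1)\Delta t),\Breve\mu((k-1)\Delta t)\big)$. Adding the three links and using $e^{C\Delta t}+C\Delta t\le Ce^{C\Delta t}$ for $\Delta t$ small yields \eqref{errorBetweenReconstructions}. The bulk of the work is this freezing estimate: conceptually it is routine — a longer rerun of the Section~\ref{semigroupSplitting} computations — but one must keep every constant uniform in $k$, which rests on the a priori bounds of Theorem~\ref{Thm:solution} confining $\hat\mu$ and all the auxiliary solutions to a single fixed bounded ball of $(\mathcal M^+(U),\rho_F)$.
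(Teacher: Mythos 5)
Your proposal is correct and follows essentially the same route as the paper: the same interpolation through the auxiliary solutions $\lambda$ and $\nu$ of Table \ref{Table:AuxSchemes}, the one-step splitting estimate \eqref{SemigroupError1step} (via Theorem \ref{operatorSplittingConverges} and Corollary \ref{semigroupSPMCOnverges}) for the link to $\Breve\mu$, Lipschitz dependence on the initial datum for the amplification factor $e^{C\Delta t}$, and continuity with respect to the model functions combined with the $\alpha$-Hölder regularity in time and $\omega_X(s)\leq Cs$ for the freezing error, which is exactly the paper's $I_1$--$I_4$ decomposition. The only notable (and welcome) difference is that you explicitly control the mismatch caused by the growth stage of $\Breve\mu$ evaluating its coefficients at $\bar\mu(k\Delta t)$ rather than at the step's initial measure, a point the paper absorbs silently into its citation of the splitting theorems.
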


\begin{proof}
In order to use the semigroup splitting approach described in Section \ref{semigroupSplitting}, we introduce yet another auxiliary problem, which corresponds to the limit of the semigroup splitting scheme applied to $\mathcal{S}^1$ and $\mathcal{S}^2$. In particular, let $\nu$ be the exact solution of \eqref{eq:mod_pushforward} with initial value $\mu_k$ and  model functions 
\begin{align}
        &\bar{X}(t,\cdot)=X(t,k\Delta t,\cdot,{\mu}_{k}),\\
        &\bar{c}(x) = c(k\Delta t,x,{\mu}_{k}),\\
        & \bar{\eta}(x)(\cdot)= \hat \eta(k\Delta t,x,\mu_k)(\cdot),\\
        & \bar{N}(\cdot) = \hat N(k\Delta t,\mu_k)(\cdot).
 \end{align}
We use the auxiliary schemes to obtain solutions that we can easily compare. $\Breve \mu$ and $\nu$ can be compared via the operator splitting process, see Section \ref{semigroupSplitting}, whereas $\nu$ and $\hat \mu$ can be compared using the continuity of the solution w.r.t. parameter functions and initial data, see Theorem \ref{Thm:solution}. An overview of all of the solutions that we compare in this proof and of the model functions that are used for them is given in Table \ref{Table:AuxSchemes}.
    
By Theorems \ref{operatorSplittingConverges}  and \ref{semigroupSPMCOnverges}, we obtain that 
\begin{equation}
    \rho_F\big(\Breve{\mu}((k+1)\Delta t),\nu((k+1)\Delta t)\big)\leq C(\Delta t)^2.
\end{equation}
The difference $\rho_F\big(\hat \mu((k+1)\Delta t),\nu((k+1)\Delta t)\big)$ can be bound by using the continuous dependence of solutions on the model functions and initial values (Theorem \ref{Thm:solution}).  
To this end, let $\lambda(\cdot)\in C^0([0,T],\mathcal{M}^+(U))$ be a solution to \eqref{eq:mod_pushforward} on $[k\Delta t,(k+1)\Delta t]$ with exact initial value $\mu(k\Delta t)$ but model functions $\bar X, \bar \eta,\bar N$ and $\bar c$. \\
It holds that
\begin{align}
\label{errorlambda}
    &\rho_F\big(\hat\mu((k+1)\Delta t),\nu((k+1)\Delta t)\big)\\
    &\leq \rho_F\big(\hat\mu((k+1)\Delta t),\lambda((k+1)\Delta t)\big)+\rho_F\big(\lambda((k+1)\Delta t),\nu((k+1)\Delta t)\big)
\end{align}
so that we can estimate the terms one by one.
The solutions $\lambda$ and $\nu$ only differ in the initial value, such that for the second term in \eqref{errorlambda} we obtain by Theorem \ref{Thm:solution}:
\begin{align}
\label{errorlambda1}
\begin{split}
    \rho_F\big(\lambda((k+1)\Delta t),\nu((k+1)\Delta t)\big)
    &\leq C \rho_F\big(\lambda(k\Delta t),\nu(k\Delta t)\big) \exp\left(C\Delta t \sup_{x\in U} \Vert \bar\eta(x)\Vert_{BL^*}\right)\\
    &= C \rho_F\big(\mu_k,\mu(k\Delta t)\big) \exp\left(C\Delta t \sup_{x\in U} \Vert \bar \eta(x)\Vert_{BL^*}\right).
    \end{split}
\end{align}
For the first term in \eqref{errorlambda}, in which there is a difference in the model functions (see Table  \ref{Table:AuxSchemes}), we use the Lipschitz continuity w.r.t. model functions (Theorem \ref{Thm:solution}):
\begin{align}
\label{errorlambda2}
\begin{split}
    &\rho_F\big(\hat\mu((k+1)\Delta t),\lambda((k+1)\Delta t)\big)\\
    &\leq 
    C\int\limits_{k\Delta t}^{(k+1)\Delta t} \sup_{x\in U}  \rho_F\big(\hat\eta(\tau,x,\hat\mu(\tau)), \hat\eta(k\Delta t,x,\mu_k)\big) \mathrm{d}\tau\\
    &\quad+C \int\limits_{k\Delta t}^{(k+1)\Delta t} \sup_{t\in[k\Delta t,(k+1)\Delta t]} \rho_F\big(\hat N(\tau,\hat\mu(\tau)),\hat N(k\Delta t,\mu_k)\big)\mathrm{d}\tau\\
    &\quad+ C\int\limits_{k\Delta t}^{(k+1)\Delta t}\Vert c(\tau,\cdot,\hat\mu(\tau))-c(k\Delta t,\cdot,\mu_k))\Vert_\infty \mathrm{d}\tau\\
    &\quad+C\sup_{k\Delta t\leq\tau_1\leq \tau_2\leq (k+1)\Delta t}\sup_{x\in U} d\big(X(\tau_2,\tau_1,\cdot,\mu(\cdot)),X(\tau_2, k\Delta t,\cdot,\mu_k)\big)\\
    &:= I_1+I_2+I_3+I_4\\
   &\leq C\Delta t \big(\rho_F\big(\hat\mu(k\Delta t),\mu_k\big)\big)+C(\Delta t)^2+C(\Delta t)^{{1+\alpha}},
   \end{split}
\end{align}
where we obtain the final error estimate by bounding the terms separately and assuming that $\Delta t\leq 1$ which will be done in detail below.\\
Here, we need the $C^{0,\alpha}$ regularity of the model functions with respect to time as stated in Assumptions \ref{assumptionsOnModelFunctions}\eqref{assumptionsOnModelFunctionsTime}.\\
Before we bound $I_1$ to $I_4$, note that
\begin{align}
\label{estiamteMuLipschitz}
    &\rho_F\big(\hat\mu(t),\mu_k\big)
    \leq \rho_F(\hat\mu(t),\hat\mu(k\Delta t))+\rho_F(\hat\mu(k\Delta t),\mu_k)
    \leq C\Delta t +\rho_F(\hat\mu(k\Delta t),\mu_k)
\end{align}
by the Lipschitz continuity of solutions (Theorem \ref{Thm:solution}).\\
We now estimate the terms $I_1$ to $I_4$ separately.
\begin{align*}
I_1
&\leq C \Delta t \sup_{t\in[k\Delta t,(k+1)\Delta t]}\sup_{x\in U} \left(\rho_F\big(\hat\eta(t,x,\hat\mu(t)),\hat\eta(t,x,\mu_k)\big)+\rho_F\big(\hat\eta(t,x,\mu_k),\hat\eta(k\Delta t,x,\mu_k)\big)\right)\\
& \leq C \Delta t \sup_{t\in[k\Delta t,(k+1)\Delta t]} \left(L_{\hat\eta,R}\rho_F\big(\hat\mu(t),\mu_k\big)+\vert t-k\Delta t\vert^\alpha \Vert \hat\eta\Vert_{C^{0,\alpha}}\right)\\
&  \leq C \Delta t \left( \sup_{t\in[k\Delta t,(k+1)\Delta t]}L_{\hat\eta,R}(\rho_F\big(\hat\mu(t_k),\mu_k\big)+C\Delta t)+(\Delta t)^\alpha \Vert \eta\Vert_{C^{0,\alpha}}\right).
\end{align*}

Here, $R=\sup_{t\in[0,T]}\Vert \hat\mu(t)\Vert_{TV}$ which is bounded due to Theorem \ref{Thm:solution}.\\

Similarly, we obtain
\begin{align*}
    I_2
    &\leq C \Delta t  \sup_{t\in[k\Delta t,(k+1)\Delta t]}\left(\rho_F\big(\hat N(t,\hat\mu(t)),\hat N(t,\mu_k)\big)+\rho_F\big(\hat N(t,\mu_k),\hat N(k\Delta t,\mu_k)\big)\right)\\
    &\leq  C \Delta t \sup_{t\in[k\Delta t,(k+1)\Delta t]}\left( L_{\hat N,R}\big(\rho_F\big(\hat \mu(k\Delta t),\mu_k\big)+C\Delta t\big)+\vert t-k\Delta t\vert^\alpha\Vert \hat N\Vert_{C^{0,\alpha}}\right)\\
    &\leq C \Delta t \left( \sup_{t\in[k\Delta t,(k+1)\Delta t]} L_{\hat N,R}\big(\rho_F\big(\hat\mu(k\Delta t),\mu_k\big)+C\Delta t\big)+(\Delta t)^\alpha\Vert \hat N\Vert_{C^{0,\alpha}}\right).\\
\end{align*}
The third term can be estimated by
\begin{align*}
 I_3
 &\leq C \Delta t  \sup_{t\in[k\Delta t,(k+1)\Delta t]}\left( \Vert c(t,\cdot,\hat\mu(t))-c(t,\cdot,\mu_k)\Vert_\infty + \Vert c(t,\cdot,\mu_k)-c(k\Delta t,\cdot,\mu_k)\Vert_\infty \right)
\\
&\leq C \Delta t  \sup_{t\in[k\Delta t,(k+1)\Delta t]} \left(L_{C,R}\left(\rho_F\big(\hat\mu(k\Delta t),\mu_k\big)+C\Delta t\right) +\vert t-k\Delta t\vert^\alpha \Vert c\Vert_{C^{0,\alpha}}\right)\\
&\leq C \Delta t \left( \sup_{t\in[k\Delta t,(k+1)\Delta t]} L_{C,R}\left(\rho_F\big(\hat\mu(k\Delta t),\mu_k\big)+C\Delta t\right) +(\Delta t)^\alpha \Vert c\Vert_{C^{0,\alpha}}\right).
\end{align*}

For the term $I_4$, we use the semigroup property of $X$ (Assumptions \ref{assumptionsOnModelFunctions} \eqref{AssumptionSemigroup}) which yields
\begin{align}
    X(\tau_2,t_k,x,\mu_k)
    =X(\tau_2,\tau_1,X(\tau_1,t_k,x,\mu_k),\mu_k)
\end{align} 
for any $k\Delta t\leq \tau_1\leq \tau_2\leq (k+1)\Delta t$.\\
\begingroup
\allowdisplaybreaks
Hence, we obtain 
\begin{align}
    I_4
    &= \sup_{k\Delta t\leq \tau_1\leq\tau_2\leq (k+1)\Delta t}\ \sup_{x\in U} d\big(X(\tau_2,\tau_1,x,\hat\mu(\cdot)), X(\tau_2,\tau_1,X(\tau_1,t_k,x,\mu_k),\mu_k)\big)\\
    &\leq \sup_{k\Delta t\leq \tau_1\leq\tau_2\leq (k+1)\Delta t}\ \sup_{x\in U} \big( d\big(X(\tau_2,\tau_1,x,\hat\mu(\cdot)),X(\tau_2,\tau_1,x,\mu_k)\big) \\
    &+ d\big(X(\tau_2,\tau_1,x,\mu_k),X(\tau_2,\tau_1, X(\tau_1,k\Delta t,x,\mu_k), \mu_k)\big) \big)\\
    &\leq \sup_{k\Delta t\leq \tau_1\leq\tau_2\leq (k+1)\Delta t}\ \sup_{x\in U}\left(\int\limits_{\tau_1}^{\tau_2}L_{X,R}(s)\rho_F\big(\hat\mu(s),\mu_k\big)\mathrm{d}s 
    + L_X(\tau_2-\tau_1) d\big(X(\tau_1,k\Delta t,x,\mu_k),x\big)
    \right)\\
    &\leq \sup_{k\Delta t\leq \tau_1\leq\tau_2\leq (k+1)\Delta t}\ \sup_{x\in U}\left(\int\limits_{\tau_1}^{\tau_2}L_{X,R}(s)\big(C\Delta t+\rho_F\big(\hat\mu(k\Delta t,\mu_k)\big)\big)\mathrm{d}s 
    + L_X\big(\tau_2-\tau_1\big) \omega\big(\tau_1-k\Delta t\big)
    \right)\\
    &\leq \sup_{k\Delta t\leq \tau_1\leq\tau_2\leq (k+1)\Delta t}\ \sup_{x\in U}\left( \int\limits_{\tau_1}^{\tau_2}L_{X,R}(s)\big(C\Delta t+\rho_F\big(\hat\mu(k\Delta t,\mu_k)\big)\big)\mathrm{d}s 
    + e^{C(\tau_2-\tau_1)} C(\tau_1-k\Delta t)
    \right)\\
    &\leq C\left((\Delta t)^2 +\Delta t \rho_F\big((\hat\mu(k\Delta t,\mu_k)\big) +e^{C\Delta t}\Delta t\right) 
    \\
    &\leq C (\Delta t)^2 +C \Delta t \rho_F\big((\hat\mu(k\Delta t,\mu_k)\big),
\end{align}
\endgroup
where we used the boundedness of $L_{X,R}$, the Lipschitz continuity of the solutions (Theorem \ref{Thm:solution}), $\omega_X(t)\leq Ct$ as well as $L_R(t)\leq e^{Ct}$ (Assumptions \ref{assumptionsOnModelFunctions} \eqref{assumptionsXLipschitzTX}).\\

By plugging \eqref{errorlambda1} and \eqref{errorlambda2} into \eqref{errorlambda}, we obtain:
\begin{align}
\label{error1step}
\begin{split}
    \rho_F\big(\hat\mu((k+1)\Delta t),\Breve{\mu}((k+1)\Delta t)\big)
    &\leq C(\Delta t+e^{c\Delta t})\rho_F\big(\hat\mu(k\Delta t),\mu_k\big)+C\big((\Delta t)^2+(\Delta t)^{1+\alpha}\big)\\
    &\leq C(e^{c\Delta t})\rho_F\big(\hat\mu(k\Delta t),\mu_k\big)+C\big((\Delta t)^2+(\Delta t)^{1+\alpha}\big)\\
    \end{split}
\end{align}
as $\Delta t +e^{C\Delta t}\leq C e^{C\Delta t}$ for small $\Delta t$.\\

\end{proof}

\begin{lemma}
\label{errorBetweenRecNumerical}
Let $k\Delta t, m\Delta t\in [0,T]$, with $k>m$.
    Denote by $\hat\mu(k\Delta t)$ the exact solution to \eqref{eq:mod_pushforward} using the approximate model functions $\hat\eta$ and $\hat N$ and denote by $\mu_k$ the result of the numerical scheme from Section \ref{Sec:NumericalScheme} including numerical errors. The error between those two measures is bounded by
\begin{align}
    \rho_F\big(\hat\mu(k\Delta t),\mu_k\big)
    &\leq C(e^{C\Delta t})^{(k-m)}\rho_F\big(\hat\mu(m\Delta t),\mu_m\big)+ C(k-m)\big((\Delta t)^2+(\Delta t)^{1+\alpha})\big) .
\end{align}
\end{lemma}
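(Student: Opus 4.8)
The plan is to upgrade the one-step estimate of Lemma~\ref{ErrorAlgorithm} to the claimed multi-step bound by a discrete Gr\"onwall (telescoping) argument, exactly as was done around~\eqref{estimateOnInterval} in the continuous-time setting. Throughout, abbreviate $e_k:=\rho_F\big(\hat\mu(k\Delta t),\mu_k\big)$.

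The first step is to record a one-step recursion of the form
\begin{align}
    e_{k+1}\leq e^{C\Delta t}\,e_k + C\big((\Delta t)^2+(\Delta t)^{1+\alpha}\big).
\end{align}
By the triangle inequality, inserting the semi-continuous auxiliary solution $\Breve\mu$ of~\eqref{justTransport1} and~\eqref{grwothNonlocal1} started from the current numerical state $\mu_k$,
\begin{align}
    e_{k+1}\leq \rho_F\big(\hat\mu((k+1)\Delta t),\Breve\mu((k+1)\Delta t)\big)+\rho_F\big(\Breve\mu((k+1)\Delta t),\mu_{k+1}\big).
\end{align}
The first term is precisely the quantity estimated in the proof of Lemma~\ref{ErrorAlgorithm}: its one-step form~\eqref{error1step} bounds it by $Ce^{C\Delta t}e_k+C((\Delta t)^2+(\Delta t)^{1+\alpha})$, the amplification factor $\Delta t+e^{C\Delta t}$ being again of the shape $1+O(\Delta t)$ and hence absorbable into a single exponential. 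The second term is the local error of the explicit Euler mass update~\eqref{MassEvolution} against the exact solution~\eqref{grwothNonlocal1} of the frozen, autonomous growth/jump problem; since that problem is linear with bounded, time-independent coefficients, $\Breve\mu$ is smooth in $t$, and a first-order Taylor expansion shows this term is $O((\Delta t)^2)$, so it merges into the additive part. (If $\mu_k$ is read as the exactly iterated auxiliary scheme, this second term vanishes and the recursion is literally~\eqref{errorBetweenReconstructions}.)

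The second step is to iterate this recursion from index $m$ up to index $k$ via Lemma~\ref{iterationLemma}, which gives
\begin{align}
    e_k\leq (e^{C\Delta t})^{k-m}e_m + C\big((\Delta t)^2+(\Delta t)^{1+\alpha}\big)\sum_{j=0}^{k-m-1}(e^{C\Delta t})^{j}.
\end{align}
The geometric sum has $k-m$ terms, each at most $(e^{C\Delta t})^{k-m}$, hence is bounded by $(k-m)(e^{C\Delta t})^{k-m}$; and since $(k-m)\Delta t\leq T$ we may replace $(e^{C\Delta t})^{k-m}$ by $e^{CT}$ and absorb it into the constant multiplying the additive term. This is exactly the assertion
\begin{align}
    \rho_F\big(\hat\mu(k\Delta t),\mu_k\big)\leq C(e^{C\Delta t})^{k-m}\rho_F\big(\hat\mu(m\Delta t),\mu_m\big)+C(k-m)\big((\Delta t)^2+(\Delta t)^{1+\alpha}\big).
\end{align}

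All the real work is packaged inside Lemma~\ref{ErrorAlgorithm}, which I use as a black box, so I do not expect a substantial obstacle here; this lemma is essentially an accounting step. The two points that still need attention are: (i) confirming that the explicit Euler step for the masses is only a second-order perturbation of the exact growth/jump evolution~\eqref{grwothNonlocal1}, so that it neither lowers the per-step order nor exceeds the splitting/time-freezing error $(\Delta t)^2+(\Delta t)^{1+\alpha}$; and (ii) ensuring the per-step amplification is $1+O(\Delta t)$, not $(1+c)(1+O(\Delta t))$ with $c>0$ --- this is what makes the accumulated error grow only linearly in $k-m$, and is why the bound carries the harmless factor $(e^{C\Delta t})^{k-m}\leq e^{CT}$ rather than a genuinely exploding constant; it holds because the continuous-dependence constants supplied by Theorem~\ref{Thm:solution} appear in exponential, not purely multiplicative, form.
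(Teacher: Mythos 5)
Your proposal is correct and follows essentially the same route as the paper: upgrade the one-step splitting estimate of Lemma~\ref{ErrorAlgorithm} by absorbing the explicit-Euler error of order $(\Delta t)^2$ (valid by the $C^1$ regularity assumptions) into the additive term, and then iterate with Lemma~\ref{iterationLemma}, bounding the geometric sum by $C(k-m)$. The only cosmetic difference is that the paper also explicitly names the $(\Delta t)^{1+\alpha}$ error from freezing the transport function at $k\Delta t$, which in your version is already contained in the additive term inherited from the splitting lemma.
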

\begin{proof}
The error estimates from Theorem \ref{ErrorAlgorithm} were made using exact solutions to Equation \eqref{grwothNonlocal1} describing the evolution of the masses in the points. In the numerical scheme, we use an approximation by an explicit Euler scheme instead (see \eqref{MassEvolution}). Hence, we also need to consider the numerical errors.\\
One step of the explicit Euler scheme has an error of order $(\Delta t)^2$ \cite{ODESolver} if the differential operator is continuously differentiable. \\
As we require that $c,\beta_l,n_l\in C^1([0,T])$ (Assumption \ref{assumptionsOnModelFunctions}\eqref{C1assumption} or \eqref{Ass:finiteRangeNonlinear}), this applies to \eqref{MassEvolution}.\\
Further, evaluating the transport function in $k\Delta t$ as in \eqref{trasnportNumericalScheme]} rather than in $t\in[k\Delta t,(k+1)\Delta t]$ as in \eqref{transportAuxiliary} leads to an error of order $(\Delta t)^{1+\alpha}$ due to the regularity of $X$ in time (see Assumption \ref{assumptionsOnModelFunctions}\eqref{assumptionsOnModelFunctionsTime}).\\
Hence, estimate \eqref{error1step} also holds true if we replace $\Breve\mu(m\Delta t)$ with $\mu_m$.
    We use this estimate iteratively by employing Lemma \ref{iterationLemma}:
\begin{align*}
    &\rho_F\big(\hat\mu(k\Delta t),\mu_k\big)\\
    &\leq C(e^{C\Delta t})^{(k-m)}\rho_F\big(\hat\mu(m\Delta t),\mu_m\big)+\frac{ C(e^{C\Delta t})^{(k-m)}-1}{ Ce^{C\Delta t}-1} C\big((\Delta t)^2+(\Delta t)^{1+\alpha})\big)  \\
    &\leq C(e^{C\Delta t})^{(k-m)}\rho_F\big(\hat\mu(m\Delta t),\mu_m\big)+ C(k-m)\big((\Delta t)^2+(\Delta t)^{1+\alpha}\big) .
\end{align*} 
Here we used that $e^x-1\geq x$ for all $x\in\mathbb{R}$ and that on a compact interval, $e^x-1\leq Cx$. Thus for $(k-l)\Delta t\in[0,T]$
\begin{align}
    \label{fraction}
    \frac{ C(e^{C\Delta t})^{(k-m)}-1}{ Ce^{C\Delta t}-1}
    \leq C \frac{\Delta t (k-m)}{\Delta t}\leq C(k-m).
\end{align}

\end{proof}

\begin{corollary}

 Let $k \Delta t\in [0,T]$. Assume that $\Delta t\leq 1$.
    The overall error of the numerical scheme can be estimated by
    \begin{align}
        &\rho_F\big(\mu(k\Delta t),\mu_k\big)
    \leq C\bigg(r+\varepsilon + \rho_F\big(\mu(0),\mu_0\big)+[(\Delta t)+(\Delta t)^\alpha]\bigg).
    \end{align}
    The constant $C$ is allowed to depend on the model functions and on $T$. 
\end{corollary}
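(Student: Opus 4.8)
The plan is to interpolate between the exact solution $\mu$ and the numerical output $\mu_k$ through the exact solution $\hat\mu$ of \eqref{eq:mod_pushforward} which starts from the same initial value $\mu(0)$ but uses the discretized model functions $\hat\eta,\hat N$ from Theorem~\ref{Theorem:finiteRangeApproximation}. By the triangle inequality for the flat metric,
\begin{align}
    \rho_F\big(\mu(k\Delta t),\mu_k\big)\leq \rho_F\big(\mu(k\Delta t),\hat\mu(k\Delta t)\big)+\rho_F\big(\hat\mu(k\Delta t),\mu_k\big),
\end{align}
and it suffices to control the two summands separately; the case $k=0$ reduces to a tautology, so one may assume $k\geq 1$.

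For the first summand I would use that $\mu$ and $\hat\mu$ solve \eqref{eq:mod_pushforward} with the same initial value, the same transport function $X$ and the same growth function $c$, differing only in the non-local and influx terms. Applying the continuous dependence of solutions on the model functions from Theorem~\ref{Thm:solution} — the same estimate already exploited in \eqref{errorlambda2} — gives a bound of the form $\rho_F\big(\mu(t),\hat\mu(t)\big)\leq C\int_0^t\big(\sup_x\rho_F(\eta(\tau,x),\hat\eta(\tau,x))+\rho_F(N(\tau),\hat N(\tau))\big)\,\mathrm{d}\tau$, where the prefactor stays bounded on $[0,T]$ because $\Vert\hat\eta(t,x)\Vert_{BL^*}\leq(\varepsilon+r+1)\Vert\eta(t,x)\Vert_{BL^*}$ (shown in the proof of Theorem~\ref{Theorem:finiteRangeApproximation}), and similarly for $\hat N$. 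Inserting the approximation estimates \eqref{Eq:Approximationerror} together with the uniform bounds on $\Vert\eta\Vert_{BL^*}$ and $\Vert N\Vert_{BL^*}$ then yields $\rho_F\big(\mu(k\Delta t),\hat\mu(k\Delta t)\big)\leq CT(\varepsilon+r)\leq C(\varepsilon+r)$. In the nonlinear case $\hat\eta=\eta$ and $\hat N=N$, so $\hat\mu=\mu$ and this summand vanishes identically.

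For the second summand I would invoke Lemma~\ref{errorBetweenRecNumerical} with $m=0$; since $\hat\mu(0)=\mu(0)$, this gives
\begin{align}
    \rho_F\big(\hat\mu(k\Delta t),\mu_k\big)\leq C\,e^{Ck\Delta t}\,\rho_F\big(\mu(0),\mu_0\big)+Ck\big((\Delta t)^2+(\Delta t)^{1+\alpha}\big).
\end{align}
Using $k\Delta t\leq T$ bounds the exponential by $e^{CT}$, which is absorbed into $C$, and using $k\leq T/\Delta t$ turns the remaining term into $C\big(\Delta t+(\Delta t)^{\alpha}\big)$. Combining the two summands and relabelling constants gives the asserted estimate (and, if a bound purely in $\varepsilon,r,\Delta t$ is desired, one may further estimate $\rho_F(\mu(0),\mu_0)\leq(\varepsilon+r)\Vert\mu(0)\Vert_{TV}$ via Lemma~\ref{IntialError}). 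The only genuinely delicate point is this last step: the per-step errors of order $(\Delta t)^2$ and $(\Delta t)^{1+\alpha}$ must be summed over $k\sim T/\Delta t$ steps without the geometric accumulation of the Lipschitz factors $e^{C\Delta t}$ overwhelming the gain. This, however, is precisely the discrete Gronwall bookkeeping already carried out in \eqref{fraction} in the proof of Lemma~\ref{errorBetweenRecNumerical}, so no new obstacle arises and the remaining work is purely arithmetic.
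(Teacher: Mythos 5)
Your proposal is correct and follows essentially the same route as the paper: the same triangle-inequality decomposition through $\hat\mu$, the same use of the continuous dependence on model functions (Theorem \ref{Thm:solution}) combined with the approximation bound of Theorem \ref{Theorem:finiteRangeApproximation} for the first summand, and the same application of Lemma \ref{errorBetweenRecNumerical} with $m=0$, $k\leq T/\Delta t$ for the second. The extra remarks (vanishing of the first term in the nonlinear case, optional use of Lemma \ref{IntialError} for the initial error) match the paper's treatment as well.
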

\begin{proof}
We can split the error into an approximation error and a time-discretization error:
\begin{align}
    \rho_F\big(\mu(k\Delta t),\mu_k\big)
    \leq 
    \rho_F\big(\mu(k\Delta t),\hat\mu(k\Delta t)\big)+\rho_F\big(\hat\mu(k\Delta t),\mu_k\big).
\end{align}
In the case that $\eta$ and $N$ are nonlinear, $\mu=\hat\mu$ and the first term vanishes. If $\eta$ and $N$ are linear, we obtain by the continuous dependence of the solution on the model functions (Theorem \ref{Thm:solution}):
\begin{align}
    &\rho_F\big(\mu(k\Delta t),\hat\mu(k\Delta t)\big)\\
    &\leq C\int\limits_0^{k\Delta t}\sup_{x\in U}\rho_F\big(\eta(\tau,x),\hat\eta(\tau,x)\big)+\rho_F\big(N(\tau),\hat N(\tau)\big)\mathrm{d}\tau\\
    &\leq C\varepsilon\int\limits_0^{T} \sup_{x\in U} \Vert \eta(\tau,x)\Vert_{BL^*}+\Vert N(\tau)\Vert_{BL^*}\mathrm{d}\tau\\
    &\leq C(\varepsilon +r)
\end{align}
by Theorem \ref{Theorem:finiteRangeApproximation}.
Further, we obtain using Lemma \ref{errorBetweenRecNumerical} with $m=0$ and using that the number of time steps in the algorithm is at most $\frac{T}{\Delta t}$, i.e. $k\leq\frac{T}{\Delta t}$
\begin{align}
    \rho_F\big(\hat\mu(k\Delta t),\mu_k\big)
    &\leq C(e^{C\Delta t})^{k}\rho_F\big(\hat\mu(0),\mu_0\big)+ Ck\bigg((\Delta t)^2+(\Delta t)^{1+\alpha}\bigg)\\    
    &\leq C\bigg( \rho_F\big(\hat\mu(0),\mu_0\big)+(\Delta t)+(\Delta t)^\alpha\bigg).
\end{align}
\end{proof}

\section{Discussion}
High-dimensional experimental data for the structure of populations enforces consideration of structured population models beyond the classical euclidean setting. An examples of this are the dynamics of heterogeneous cell system, which are nowadays characterized by time series of single-cell sequencing data. 

Structured population models on the space of Radon measures are a useful tool to study the evolution of populations that are structured by an underlying variable that is best described by a non-Euclidean state space. We are able to use a state space which is merely a separable and complete metric space.\\
The minimal structure and generality of this modeling framework made it necessary to develop a suitable numerical method for simulations.
To this end, we proposed a particle based operator-splitting algorithm which is of linear accuracy in the mesh density and the spatial cut-off parameter. The accuracy in time is 
$(\Delta t)^\alpha$, where $\Delta t$ is the size of the time steps and the model functions are $\alpha$ Hölder regular in time. \\
This allows to simulate a broad class of structured population models with discrete and continuous transitions, an influx function and especially with non-constant mass. \\
Further, the numerical solutions can be used to approximate the posterior density in a Bayesian inverse setting, allowing us to connect observations of the solution, such as moments of the measure, to a finite dimensional parametrization of the model.\\
The flexibility of the particle method proposed in this paper allows to place a higher density of mesh points in areas of particular interest for the application, as the only requirement on the grid is, that it is at least $\varepsilon$ dense.

The choice of the grid hat we use to approximate the measure is however also a main limitation of our approach. We assume that we know the countable dense subset of the separable metric space. While for many spaces, this set is either well known or can be constructed easily, it is not obvious how to construct such a countable dense subset for any given separable metric space.

The number of points needed to cover a set up to a given accuracy and hence the number of equations needed to attain a certain prescribed accuracy may explode.
The curse of dimensionality is a common phenomenon for many numerical methods for solving differential equations. In numerical integration, this problem is treated by (quasi) Monte Carlo methods which provide dimension independent convergence rates that depend only on the number of sample points used to calculate the integral \cite{Novak1988-ih}, \cite{Sloan1998}.\\
In our setting, we might overcome this dimension-dependence issue by sampling the measure for the initial approximation and the approximations of the model functions $N$ and $\eta$, instead of using the deterministic approximation that we proposed in this paper. \\
\\
The strong assumption needed for model functions $N$ and $\eta$ are another limitation of our approach. For our approximation, we needed to assume that the functions are linear. While this is an important constraint, many applications such as $\eta$ representing diffusion or $N$ representing a constant influx from an external source are already included. It would also be possible to assume a priori that these model functions have a finite image as it was done in \cite{Carillo}. In this case, the model functions can be nonlinear but this a strong assumption on the form of the model functions. The challenge for approximating nonlinear model functions is, that based on the theory in \cite{düll_gwiazda_marciniak-czochra_skrzeczkowski_2021}, the approximation needs to Lipschitz continuous in the nonlinearity with Lipschitz constants that are uniformly bounded as we decrease the mesh size, which is needed for controlling the overall error estimate. Unfortunately, this cannot be obtained with the approximation technique that we propose, see Remark \ref{Remark:issuesNonLinear}. 

In the setting of this paper, the transport function is described by a push forward and does hence not need derivatives or velocity fields. The trajectories of the transport function might be much easier to estimate from given data compared to estimating derivatives. 
Overall, the model framework tat we based our analysis on, allows analytical and now also numerical analysis of a broad spectrum of relatively low regularity models, which is promising for many different applications such as cell dynamics, coagulation-fragmentation models and crowd dynamics.

In the future, we will investigate the how the algorithm can be improved for particular models. We want to study how the assumptions can be relaxed or how the accuracy of the algorithm can be increased. For example if we can discretize the space at a low cost and thus adaptively choose the grid.\\

Based on Section \ref{Bayes}, we want to look into practical numerical methods to sample from the posterior measure. This will allow us to estimate the parameters determining the dynamics of a solution to our structured population model and to accurately model the dynamics describing real experimental data.\\

\newpage

\nomenclature[M]{$(U,d)$}{State space, assumed to be separable and complete}
\nomenclature[M]{$\mu(t)$}{Solution to the structured population model \eqref{eq:mod_pushforward} at time $t$}
\nomenclature[M]{$b$}{Transport direction and velocity of the structured population model on Euclidean spaces}
\nomenclature[M]{$c$}{Real valued model function describing growth and decay}
\nomenclature[M]{$\eta$}{Measure-valued model function describing non-local transitions in the state space}
\nomenclature[M]{$N$}{Measure-valued model function describing influx}
\nomenclature[M]{$X$}{Model function describing the transport through the state space}

\nomenclature[N]{$r$}{Bound for the tails of the solution $\mu(\cdot)$}
\nomenclature[N]{$K$}{Compact set on which we approximate the solution $\mu(\cdot)$}
 \nomenclature[N]{$\varepsilon$}{Distance between the points used to cover $K$}
 \nomenclature[N]{$(B_\varepsilon(z_l))_{l=1,...,L}$}{Finite, $\varepsilon$ dense covering of $K$}
\nomenclature[N]{$(z_l)_{l=1,...,L}$}{Center points of the finite covering of $K$ }

 \nomenclature[N]{$\Delta t$}{Size of time step}
 \nomenclature[N]{$\alpha$}{Hölder regularity of the model functions  in time}
\nomenclature[N]{$\mu_k$}{Numerical approximation of $\mu(k\Delta t)$ (see Section \ref{Sec:NumericalScheme})}
\nomenclature[N]{$\beta_l$}{Value of the model function $\hat\eta$ in the grid point $z_l$}
\nomenclature[N]{$\sigma_l$}{Value of the model function $\hat N$ in the grid point $z_l$}
\nomenclature[N]{$x_j^{k}$}{Locations of the $J_k$ Dirac masses in the numerical approximation at time $k\Delta t$}
\nomenclature[N]{$m_j^{k}$}{Masses of the $J_k$ Dirac masses in the numerical approximation at time $k\Delta t$}

\nomenclature[Z]{$C^0_b(U)$}{Continuous and bounded functions}

\nomenclature[Z]{$BL(U)$}{Bounded and Lipchitz continuous functions}

 \nomenclature[P]{$\pi$}{Prior Distribution}
 \nomenclature[P]{$\ell$}{Likelihood function, see \eqref{Eq:likelyhood}}
 \nomenclature[P]{$\mathcal{G}$}{Solution operator mapping parameters to a solution, see \eqref{Eq:solutionOperator}}
\nomenclature[P]{$Y\in\mathbb{R}^{I\cdot M}$}{Data vector describing data at $M$ time points in $I$ "directions" $g_i\in BL(U)$}
\nomenclature[P]{$\gamma$}{Hölder regularity of the noise density}

\nomenclature[O]{$\mathcal{K}$}{$=\bigcup_{l=1}^L B_{\varepsilon}(z_l)$, the  support of the partition of unity used for the approximation}
\nomenclature[O]{$(\varphi_l)_{l=1,...,L}$}{Partition of unity subordinate to $(B_\varepsilon(z_l))_{l=1,...,L}$}
\nomenclature[O]{$\hat\eta$}{Finite-Range approximation of model function $\eta$, see Theorem \ref{Theorem:finiteRangeApproximation}}
\nomenclature[O]{$\hat N$}{Finite-Range approximation of model function $N$, see Theorem \ref{Theorem:finiteRangeApproximation}}

\nomenclature[Z]{$\mathcal{S}^1$}{Semigroup describing transport}
\nomenclature[Z]{$\mathcal{S}^2$}{Semigroup describing growth, non-local transitions and influx}
\nomenclature[Z]{$K_1,K_2,C_1,C_2$}{Constants associated with the exponential Lipschitz continuity of the semigroups}

\nomenclature[Z]{$d$}{Dimension of Euclidean space}
\nomenclature[Z]{$\omega_{X,R}(t)$}{Modulus of continuity for the regularity of $X$ w.r.t. time}
\nomenclature[Z]{$L_{X,R}(t)$}{Function describing the continuity of $X$ w.r.t. space}
\nomenclature[Z]{$\delta_x(\cdot)$}{Dirac measure centered in $x$}

\renewcommand{\nomname}{List of Symbols}

\printnomenclature[6em]
\newpage
\bibliographystyle{plainnat}

\bibliography{bibfile}

\newpage
\appendix

\section{Assumptions on model functions}
\label{Appendix:Assumptions}
In this appendix, we give an overview of the assumptions on the model functions that are needed for the well-posedness of the model and the convergence of the numerical scheme.\\
Recall that we use the following model functions for the structured population model \eqref{eq:mod_pushforward}:
\begin{align}
\begin{split}
&c: [0,T]\times U\times\mathcal{M}^+(U)\rightarrow \mathbb{R}\\
&N:[0,T]\times\mathcal{M}^+(U)\rightarrow \mathcal{M}^+(U)\\
& X:[0,T]\times[0,T]\times U\times([0,T]\rightarrow\mathcal{M}^+(U))\rightarrow U\\
&\eta:[0,T]\times U\times\mathcal{M}^+(U)\rightarrow\mathcal{M}^+(U).
\end{split}
\end{align}
Note that the functions $N$ and $\eta$ take values in $\mathcal{M}^+(U)$.

We make the following assumptions on the model functions:
\begin{assumptions}
\label{assumptionsOnModelFunctions}
       
\begin{enumerate}
    \item  The functions $\eta$ and $N$ are uniformly tight:
for $r>0$ there exists a compact set K such that 
\begin{align}
    \vert \eta(t,x,\mu)(U\backslash K)\vert\leq r 
    \text{ and }
    \vert N(t,\mu)(U\backslash K)\vert\leq r 
\end{align}
for all $t\in[0,T]$, $\mu\in\mathcal{M}^+(U)$ and $x\in U$.
\label{Ass:ModelFunctionsTight}

\item We assume that $\eta$ is uniformly narrowly continuous in $x$
    \begin{align}
    \sup_{t\in[0,T]}\sup_{\mu\in\mathcal{M}^+(U)}\Vert\eta(t,\cdot,\mu)\Vert_{C^0(\mathcal{M}^+)}<\infty
    \end{align}
and that
       \begin{align}
           \sup_{t\in[0,T]}\sup_{\mu\in\mathcal{M}^+(U)}\Vert N(t,\mu)\Vert_{BL^*}<\infty,
       \end{align}
    as well as

\begin{equation}
     \sup_{t\in[0,T]} \sup_{\mu\in\mathcal{M}^+(U)} \Vert c(t,\cdot,\mu) \Vert_{BL} <\infty.   
\end{equation}
\label{assumptionsSupremum}

\item For all $R>0$ there exist constants $L_{R,c},L_{R,\eta}, L_{R,N}$ such that for any measures $\mu,\nu\in\mathcal{M}^+(U)$ with $\Vert\mu\Vert_{BL^*},\Vert\nu\Vert_{BL^*}\leq R$:
\begin{align}
&\sup_{t\in[0,T]}\sup_{x\in U}\vert c(t,\cdot,\mu)-c(t,\cdot,\nu)\vert \leq L_{R,c}\ \rho_F(\mu,\nu), \\
&\sup_{t\in[0,T]}\sup\limits_{x\in U}\Vert \eta(t,x,\mu)-\eta(t,x,\nu)\Vert_{BL^*}
\leq L_{R,\eta}\ \rho_F(\mu,\nu) \text{ and } \\
&\sup_{t\in[0,T]}\Vert N(t,\mu)-N(t,\nu)\Vert_{BL^*} \leq L_{R,N}\ \rho_F(\mu,\nu) .
\end{align}
\label{assumptionsLipschitzMeasure}

\item  For $\alpha\in(0,1]$, we assume that all of the model functions are uniformly of $C^{0,\alpha}$ regularity w.r.t. time: for all $\mu\in\mathcal{M}^+(U)$, $x\in U$ and $t,\tau\in[0,T]$:
       \begin{align*}
       & \vert c(t,x,\mu)-c(\tau,x,\mu)\vert \leq \Vert c\Vert_{0,\alpha}\vert t-\tau\vert ^\alpha,\\
           & \Vert \eta(t,x,\mu)-\eta(\tau,x,\mu)\Vert_{BL^*}
           \leq \Vert \eta\Vert_{0,\alpha} \vert t-\tau\vert^\alpha \text{ and }\\
           & \Vert N(t,\mu)-N(\tau,\mu)\Vert_{BL^*} \leq \Vert N\Vert_{0,\alpha} \vert t-\tau\vert^\alpha,
       \end{align*}
       where we assume that all of the constants are uniformly bounded with respect to space and measure variables.
 \label{assumptionsOnModelFunctionsTime}
       
\item We assume that for fixed $x\in U$ and $\mu\in\mathcal{M}^+(U)$ 
       \begin{align}
           c(\cdot,x,\mu)\in C^1([0,T],\mathbb{R}).
       \end{align}
       In the case that $\eta$ and $N$ are linear we assume that for test functions $\psi\in BL(U)$
       \begin{align}
       &\int\limits_U \psi(y)\mathrm d [\eta(t,x)](y) \in C^1([0,T])\text{ and }\\
       &\int\limits_U \psi(y)\mathrm d [N(t)](y) \in C^1([0,T])\\
       \end{align}
       for all $x\in U,\mu\in\mathcal{M}^+(U)$.
       \label{C1assumption}

\item Let $R>0$. For a narrowly continuous map 
$\mu(\cdot) :[0,T]\rightarrow \mathcal{M}^+(U)$ such that $\sup\limits_{t\in[0,T]} \Vert\mu(t)\Vert_{BL^*}\leq R, $
the function $X(t,\tau,x,\mu(\cdot))$ is uniformly continuous in $t$ and Lipschitz continuous in $x$:\\ 
 We assume that there exists a modulus of continuity ${\omega_{X,R}:[0,\infty]\rightarrow [0,\infty]}$ and a function $L_{X;R}:\mathbb{R}\rightarrow\mathbb{R}$ locally bounded such that for all $t_1,t_2\in[0,T]$ and $x_1,x_2\in U$
\begin{align*}
&\sup\limits_{\tau\in[0,T]}\sup\limits_{x\in U} d\big(X(t_1,\tau,x,\mu(\cdot)),X(t_2,\tau,x,\mu(\cdot))\big)
\leq \omega_{X,R}\big(\vert t_1-t_2\vert\big)\\
\intertext{and}
&d\big(X(t_2,t_1,x_1,\mu(\cdot)),X(t_2,t_1,x_2,\mu(\cdot))\big)\leq L_{X,R}(t_2-t_1)d\big(x_1,x_2\big).
\end{align*}
It is further assumed that $\omega_{X,r}$ and $L_{X,R}$ decay as follows:
\begin{align}
   \omega_{X,R}(t)\leq Ct \text{ and } L_{X,R}(t)\leq e^{Ct}.
\end{align}

\label{assumptionsXLipschitzTX}

\item 
For all $ R>0$ there exist functions $ L_{R,X}\in L^1([0,T])$ such that for all families of measures $\mu(\cdot),\ \nu(\cdot)\in C^0([0,T],\mathcal{M}^+(U))$ with $
{\sup\limits_{t\in[0,T]}\Vert\mu(t)\Vert_{BL^*}\leq R} $ and $ {\sup\limits_{t\in[0,T]}\Vert\nu(t)\Vert_{BL^*}\leq R }$
it holds for $t_1,t_2\in[0,T]$ that 
\begin{align}
     \sup_{x \in U} d(X(t_2,t_1,x,\mu(\cdot)),X(t_2,t_1,x,\nu(\cdot)) )\leq\int\limits_{t_1}^{t_2} L_{R,X}(\tau) \rho_F\big(\mu(\tau),\nu(\tau)\big)\mathrm{d}\tau .
\end{align}
\label{assumptionsXLipschitzMeasure}

\item 
For a narrowly continuous map $\mu(\cdot) :[0,T]\rightarrow \mathcal{M}^+(U)$, the function $X(t,\tau,x,\mu(\cdot))$ has the semigroup property:
 \begin{align}
     X(\tau_2,\tau_1,\cdot,\mu(\cdot))=X(\tau_2,t,\cdot,\mu(\cdot))\circ X(t,\tau_1,\cdot,\mu(\cdot)).
 \end{align}

\label{AssumptionSemigroup}

\item For nonlinear $\eta$ and $N$, we need to assume that they are of the following form
\begin{align}
    \eta(t,x,\mu)(\cdot)&=\sum_{l=1}^L \beta_l(t,x,\mu)\delta_{z_l}(\cdot)\\
    N(t,\mu)(\cdot)&=\sum_{l=1}^L n_l(t,x,\mu)\delta_{z_l}(\cdot),
\end{align}
where $\{z_l\vert l=1,..,L\}$ are the grid points, see Section \ref{Sec:NumericalScheme}. We impose that the functions $\beta_l(\cdot,x,\mu)$, $N(\cdot,\mu)\in C^1([0,T])\cap C^{0,\alpha}([0,T])$ for any fixed $x\in U$ and $\mu\in\mathcal{M}^+(U)$. We further impose that the functions are uniformly Lipschitz continuous in the measure argument, i.e. for any $R>0$ and $\mu,\nu\in \mathcal M^+(U)$ such that $\Vert \mu\Vert_{BL^*},\Vert\nu\Vert_{BL^*}\leq R$
\begin{align}
    &\sup_{t\in[0,T]}\sup_{x\in U}\vert \beta_l(t,\cdot,\mu)-\beta_l(t,\cdot,\nu)\vert \leq L_{R,\eta}\ \rho_F(\mu,\nu) \text{ and } \\
&\sup_{t\in[0,T]}\Vert n_l(t,\mu)-n_l(t,\nu)\Vert_{BL^*} \leq L_{R,N}\ \rho_F(\mu,\nu) 
\end{align}
for $l=1,...,L$.\\
Lastly, we need to assume that $\beta_l(t,\cdot,\mu)$ is continuous in $x$ for $t$ and $\mu$ fixed. 
\label{Ass:finiteRangeNonlinear}
\end{enumerate}

\end{assumptions}

\begin{remark}
    It is easy to verify that for the approximations $\hat\eta$ and $\hat N$ from Theorem \ref{Theorem:finiteRangeApproximation}, Assumptions \ref{assumptionsOnModelFunctions}\eqref{assumptionsSupremum} and  \eqref{assumptionsOnModelFunctionsTime} are also fulfilled. Further, $\hat\eta$ and $\hat N$ are only supported on the  set $\mathcal{K}=\bigcup_{l=1}^LB_\varepsilon(z_l)$ and hence tight. 
\end{remark}

\begin{remark}
\label{AssumptionsX}

    In \cite[Assumptions 3.12]{düll_gwiazda_marciniak-czochra_skrzeczkowski_2021}, the much stronger condition that $X(t,\tau,\cdot,\mu(\cdot))$ is a bi-Lipschitz family of homeomorphisms is required. This is satisfied for solutions of flows on vector field on $\mathbb{R}^d$ of the form 
    \begin{align}
        \partial_t X_b(t,\tau,x,\mu(\cdot))=b(t,X_b(t,\tau,x,\mu(\cdot)),\mu(t)),\hspace{2 cm} X_b(t,\tau,x,\mu(\cdot))=x
    \end{align} by \cite[Lemma 3.29]{düll_gwiazda_marciniak-czochra_skrzeczkowski_2021} 
    under the assumption that $b(\cdot,\cdot,\mu)\in L^1([0,T],BL(\mathbb{R}^d))$ uniformly in $\mu$ and that $b$ is Lipschitz continuous in the measure argument. Such flows of vector fields are the motivation to consider the transport functions $X$ in the model \eqref{eq:mod_pushforward}. However, a careful study of the arguments in \cite[Chapter 3]{düll_gwiazda_marciniak-czochra_skrzeczkowski_2021} reveals that it is sufficient to consider a function $X$ which is Lipschitz but not necessarily bijective (see Assumption \ref{assumptionsOnModelFunctions}\eqref{assumptionsXLipschitzTX})\cite{Christian}. This allows to model many more phenomena.\\
    In the formulation of $X$ as the flow of a vector field, it becomes visible why $X$ depends on $\mu(\cdot)$ at all time points as the location at time $t_1$ might depend on the solution at all times between $t_0$ and $t_1$. 
\end{remark}

\section{The space \texorpdfstring{$\mathcal{M}^+(U)$} and solutions of structured population models on the space of Radon measures}
\label{Sec:SolutionsandProperties}
We recapitulate the results from \cite{düll_gwiazda_marciniak-czochra_skrzeczkowski_2021} on the space $(\mathcal M^+(U),\rho_F)$ and its properties that are important for this work. 

\begin{lemma}[{Scaling of the flat norm \cite[1.23]{düll_gwiazda_marciniak-czochra_skrzeczkowski_2021}}]
\label{scalingFlatNorm}
    For $s>0$, the flat norm has the following scaling property:
    \begin{align}
        \Vert \mu\Vert_{BL^*}=\frac{1}{s}\sup \left\{\int\limits_U \psi(x)\mathrm{d}\mu(x),\ \vert \psi\in BL(U),\ \Vert \psi\Vert_{BL}\leq s\right\}.
    \end{align}
\end{lemma}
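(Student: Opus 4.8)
The plan is to obtain this identity directly from positive homogeneity of the $BL$-norm, via a change of variables in the supremum that defines the flat norm. First I would record the elementary fact that $\Vert s\psi\Vert_{BL}=s\Vert\psi\Vert_{BL}$ for every $s>0$ and $\psi\in BL(U)$: both ingredients of the norm --- the sup-norm $\sup_{x\in U}\vert\psi(x)\vert$ and the Lipschitz seminorm $\sup_{x\neq y}\vert\psi(x)-\psi(y)\vert/d(x,y)$ --- are multiplied by exactly $s$ when $\psi$ is replaced by $s\psi$, hence so is their maximum. Consequently the map $\psi\mapsto s\psi$ is a bijection of the unit ball $\{\psi\in BL(U):\Vert\psi\Vert_{BL}\leq 1\}$ onto $\{\psi\in BL(U):\Vert\psi\Vert_{BL}\leq s\}$, with inverse $\phi\mapsto s^{-1}\phi$.

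Next I would substitute $\psi=s\phi$, $\Vert\phi\Vert_{BL}\leq 1$, into the right-hand supremum and pull the scalar $s$ out of the integral (linearity of $\phi\mapsto\int_U\phi\,\mathrm{d}\mu$ in the integrand):
\begin{align*}
\sup_{\Vert\psi\Vert_{BL}\leq s}\ \int\limits_U\psi(x)\,\mathrm{d}\mu(x)
&=\sup_{\Vert\phi\Vert_{BL}\leq 1}\ \int\limits_U s\,\phi(x)\,\mathrm{d}\mu(x)\\
&=s\,\sup_{\Vert\phi\Vert_{BL}\leq 1}\ \int\limits_U\phi(x)\,\mathrm{d}\mu(x)
\;=\; s\,\Vert\mu\Vert_{BL^*},
\end{align*}
the final equality being the definition \eqref{def:flatNorm} of the flat norm. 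Dividing by $s>0$ yields the claimed scaling property.

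If one prefers not to invoke the bijection, the two inequalities can be written out directly. For ``$\leq$'', any $\psi$ with $\Vert\psi\Vert_{BL}\leq s$ satisfies $\Vert s^{-1}\psi\Vert_{BL}\leq 1$, so $\int_U\psi\,\mathrm{d}\mu=s\int_U s^{-1}\psi\,\mathrm{d}\mu\leq s\Vert\mu\Vert_{BL^*}$; taking the supremum over such $\psi$ gives ``$\leq s\Vert\mu\Vert_{BL^*}$''. For ``$\geq$'', any $\phi$ with $\Vert\phi\Vert_{BL}\leq 1$ produces the admissible competitor $s\phi$ on the right, whence $s\int_U\phi\,\mathrm{d}\mu\leq\sup_{\Vert\psi\Vert_{BL}\leq s}\int_U\psi\,\mathrm{d}\mu$, and the supremum over $\phi$ gives the reverse bound. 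I do not anticipate any genuine difficulty: the statement is a formal consequence of the homogeneity of $\Vert\cdot\Vert_{BL}$ and is, in any case, quoted from \cite[1.23]{düll_gwiazda_marciniak-czochra_skrzeczkowski_2021}; the only thing to keep straight is the direction of the scaling factor ($s$ versus $s^{-1}$) in each of the two estimates.
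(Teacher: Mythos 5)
Your argument is correct: positive homogeneity of $\Vert\cdot\Vert_{BL}$ makes $\psi\mapsto s\psi$ a bijection between the $BL$-balls of radius $1$ and $s$, and pulling the scalar out of the integral gives the identity; the two-inequality version you sketch is the same computation made explicit. The paper itself does not prove this lemma but quotes it from \cite[1.23]{düll_gwiazda_marciniak-czochra_skrzeczkowski_2021}, and your homogeneity argument is precisely the standard proof of that cited result, so there is nothing to add.
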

\begin{theorem}[Properties of $(\mathcal{M}^+(U),\rho_F)$]
\label{propertiesM+}
If $U$ is Polish, then 
    the space $(\mathcal{M}^+(U),\rho_F)$ is separable \cite[Theorem 1.37]{düll_gwiazda_marciniak-czochra_skrzeczkowski_2021} and complete \cite[Theorem G.42]{düll_gwiazda_marciniak-czochra_skrzeczkowski_2021}.\\
    Further, for $\mu\in\mathcal{M}^+(U)$, it holds that the total variation norm and the flat norm coincide
    \begin{align*}
        \Vert\mu\Vert_{TV}:=\mu(U)=\Vert \mu\Vert_{BL^*},
    \end{align*}
    see \cite[Proposition 1.44]{düll_gwiazda_marciniak-czochra_skrzeczkowski_2021}.
\end{theorem}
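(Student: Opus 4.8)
The statement comprises three assertions about $(\mathcal{M}^+(U),\rho_F)$ for $U$ Polish: separability, completeness, and the identity $\Vert\mu\Vert_{TV}=\mu(U)=\Vert\mu\Vert_{BL^*}$ for $\mu\in\mathcal{M}^+(U)$. The plan is to establish them in increasing order of difficulty; in the paper they are quoted from \cite{düll_gwiazda_marciniak-czochra_skrzeczkowski_2021}, but the arguments are as follows.

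\emph{The norm identity.} Since $\mu$ is nonnegative, $\Vert\mu\Vert_{TV}=\mu(U)$ by definition. For the flat norm, first test against the constant function $\psi\equiv 1$, which satisfies $\Vert\psi\Vert_{BL}=\max\{1,0\}=1$, to get $\mu(U)=\int_U 1\,\mathrm{d}\mu\leq\Vert\mu\Vert_{BL^*}$. Conversely, any $\psi$ with $\Vert\psi\Vert_{BL}\leq 1$ obeys $\sup_U|\psi|\leq\Vert\psi\Vert_{BL}\leq 1$, so by nonnegativity of $\mu$ we have $\int_U\psi\,\mathrm{d}\mu\leq\int_U|\psi|\,\mathrm{d}\mu\leq\mu(U)$; taking the supremum over such $\psi$ gives $\Vert\mu\Vert_{BL^*}\leq\mu(U)$, and the two bounds combine.

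\emph{Separability.} The candidate countable dense set is the family of rational Dirac combinations $\{\sum_{l=1}^L q_l\delta_{z_l}\mid L\in\mathbb{N},\ q_l\in\mathbb{Q}_{\geq 0}\}$ built from a countable dense subset $(z_l)_{l\in\mathbb{N}}$ of $U$, which exists since $U$ is separable. Given $\mu\in\mathcal{M}^+(U)$ and a target accuracy, I would (i) use tightness of finite Borel measures on a Polish space to choose a compact $K$ carrying all but an arbitrarily small fraction of the mass; (ii) cover $K$ by finitely many $\varepsilon$-balls $B_\varepsilon(z_l)$ and replace $\mu|_K$ by $\sum_l\mu(U_{\varepsilon,l})\delta_{z_l}$ on the disjointified pieces $U_{\varepsilon,l}$ --- exactly the construction whose flat-metric error is bounded by $(\varepsilon+r)\Vert\mu\Vert_{TV}$ in Lemma \ref{IntialError}; and (iii) perturb each mass to a nearby rational, which moves the approximation in $\rho_F$ by at most $\sum_l|q_l-\mu(U_{\varepsilon,l})|$. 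Choosing the cut-off, $\varepsilon$, and the rational tolerance small enough makes the total distance as small as desired.

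\emph{Completeness.} This is the main obstacle. Let $(\mu_n)$ be $\rho_F$-Cauchy. First it is bounded in total mass, since $\mu_n(U)=\Vert\mu_n\Vert_{BL^*}$ by the norm identity and Cauchy sequences are bounded. The delicate step is to show $(\mu_n)$ is uniformly tight: for fixed $r>0$ one must produce a single compact $K_r$ with $\mu_n(U\setminus K_r)$ small for all $n$, which combines tightness of each individual $\mu_n$ with the Cauchy control in $\rho_F$, transferred to control of mass outside compacta via a partition-of-unity estimate analogous to the proof of Theorem \ref{Theorem:finiteRangeApproximation}. Prokhorov's theorem then gives a subsequence converging narrowly to some $\nu\in\mathcal{M}^+(U)$; since for a sequence of bounded total mass narrow convergence on the Polish space $U$ is metrized by $\rho_F$ (the equivalence already invoked after Corollary 1.47), we obtain $\rho_F(\mu_{n_k},\nu)\to 0$, and the Cauchy property upgrades this to $\rho_F(\mu_n,\nu)\to 0$. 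The only genuinely nontrivial points are the uniform tightness extraction and the equivalence of narrow and $\rho_F$-convergence; once these are in hand, completeness follows by the standard ``Cauchy plus convergent subsequence'' argument.
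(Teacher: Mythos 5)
The paper itself offers no proof of this theorem: all three assertions are imported by citation from the monograph of Düll--Gwiazda--Marciniak-Czochra--Skrzeczkowski, so there is no in-paper argument to compare against, and your proposal must be judged on its own. Its first two parts are correct and essentially the standard arguments: the norm identity follows exactly as you say (test with $\psi\equiv1$, whose $BL$-norm is $1$, for one inequality; use $\vert\psi\vert\le1$ and nonnegativity of $\mu$ for the other), and the separability argument (Ulam tightness of a single finite Borel measure on a Polish space, the discretization already quantified in Lemma \ref{IntialError}, then rational perturbation of the weights, costing at most $\sum_l\vert q_l-\mu(U_{\varepsilon,l})\vert$ in $\rho_F$) is sound.

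The gap is in the completeness part, precisely at the step you flag as delicate: uniform tightness of a $\rho_F$-Cauchy sequence. The ``partition-of-unity transfer'' you invoke does not by itself control mass outside a \emph{compact} set. If $K$ is compact with $\mu_N(U\setminus K)<\varepsilon$ and $\phi$ is a Lipschitz cutoff with $\phi=0$ on $K$ and $\phi=1$ off the $\varepsilon$-enlargement $K^\varepsilon$, then $\Vert\phi\Vert_{BL}\sim\varepsilon^{-1}$, and the Cauchy control only yields $\mu_n(U\setminus K^\varepsilon)\le\mu_N(U\setminus K)+\varepsilon^{-1}\rho_F(\mu_n,\mu_N)$; the set $K^\varepsilon$ is closed but in general not compact, so uniform tightness does not follow directly. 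The missing idea is a multi-scale construction: for each $k$, use this estimate at accuracy $\delta 2^{-k}$ to produce a \emph{finite union} $A_k$ of balls of radius $1/k$ with $\sup_n\mu_n(U\setminus A_k)\le\delta 2^{-k}$, and set $K_\delta=\bigcap_k\overline{A_k}$; then $\sup_n\mu_n(U\setminus K_\delta)\le\delta$ and $K_\delta$ is closed and totally bounded, hence compact \emph{because $U$ is complete}. This is exactly where completeness of $U$ enters, and it must: for a non-convergent Cauchy sequence $(x_n)$ in an incomplete $U$, the Diracs $\delta_{x_n}$ are $\rho_F$-Cauchy with no limit in $\mathcal{M}^+(U)$, so any proof that never uses completeness of $U$ cannot be right. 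Once uniform tightness is established, your remaining steps (mass-boundedness via the norm identity, Prokhorov in the direction ``tight and mass-bounded implies a narrowly convergent subsequence'', the fact that $\rho_F$ metrizes narrow convergence on $\mathcal{M}^+(U)$, and the upgrade from subsequential to full convergence by the Cauchy property) are correct.
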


In the following theorem, we summarize some results on the existence, continuity and tightness of the solutions to \eqref{eq:mod_pushforward}.

\begin{theorem}{\cite[Chapter 3]{düll_gwiazda_marciniak-czochra_skrzeczkowski_2021}}\\
    \noindent 
    Under the assumptions given in \ref{assumptionsOnModelFunctions}, the model \eqref{eq:mod_pushforward} is well posed. For any given initial measure $\mu(0)\in\mathcal{M}^+(U)$, a unique solution $(\mu(t))_{t\in[0,T]}$ exists on $[0,T]$.\\
    \begin{enumerate}
        \item  Its total variation norm is uniformly bounded, i.e. there exists a constant $C$ such that
    
    \begin{equation}
    \label{uniformBoundTV}
        \sup_{t\in[0,T]}\Vert \mu(t)\Vert_{TV}\leq C.
    \end{equation}

    \item  The solution is Lipschitz continuous w.r.t. time:
    \begin{equation}
        \label{Lipschitz_time}
        \rho_F\big(\mu({t_1}),\mu({t_2})\big)
        \leq C\vert t_1-t_2\vert \text{ for all } t_1,t_2\in[0,T].
    \end{equation}

    \item    
    The solution is tight, i.e. for every $\varepsilon>0$, there exists a compact set $K\subset U$, such that
    \begin{align}
        \mu(t)(U\backslash K)\leq \varepsilon
    \end{align}
    for all $t\in[0,T]$.\\

    \item  The model is Lipschitz continuous with respect to initial values:\\    
    Let $(\mu(t))_{t\in[0,T]},(\nu(t))_{t\in[0,T]}$ be solutions to \eqref{eq:mod_pushforward} with initial values $\mu(0)$ and $\nu(0)$ respectively. There exists a constant $C$ s.t.
    \begin{align}
        \label{Lipschitz_initial}
        \rho_F\big(\mu(t),\nu(t)\big)
        \leq C \rho_F\big(\mu(0),\nu(0)\big)e^{\int\limits_0^t \sup\limits_{\nu\in\mathcal{M}^+(U)}\Vert \eta(\tau,\cdot,\nu)\Vert_{BL^*}\mathrm{d}\tau}.
    \end{align}

    \item  
    For two solutions $\mu_1(\cdot),\ \mu_2(\cdot)$ with the same initial value but with different model functions $X_1,c_1,N_1,\eta_1$ and  $X_2,c_2,N_2,\eta_2$ fulfilling Assumptions \ref{assumptionsOnModelFunctions}, it holds that
    \begin{align}
        \begin{split}
          \label{Lipschitz_model_functions}
        &\rho_F\big(\mu_1(t),\mu_2(t)\big)\leq  C\int\limits_0^t \Vert c_1(\tau,\cdot,\mu_1(\tau))-c_2(\tau,\cdot,\mu_2(\tau))\Vert_\infty \mathrm{d}\tau\\
        &\quad+ C \int\limits_0^t \sup_{x\in U} \rho_F\big(\eta_1(\tau,x,\mu_1(\tau)),\eta_2(\tau,x,\mu_2(\tau))\big)+\rho_F\big(N_1(\tau,\mu_1(\tau)), N_2(\tau,\mu_2(\tau))\big)\mathrm{d}\tau\\
        &\quad+C \sup_{0\leq \tau_1\leq\tau_2\leq t} \sup_{x\in U} d\big(X_1(\tau_2,\tau_1,\cdot,\mu_1(\cdot)),X_2(\tau_2,\tau_1,\cdot, \mu_2(\cdot))\big) .
        \end{split}
    \end{align}
    \item The solutions to \eqref{eq:mod_pushforward} generate a two-parameter semigroup.
    
    \end{enumerate} 
    \label{Thm:solution}
\end{theorem}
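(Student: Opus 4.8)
The plan is to establish existence and uniqueness through a contraction-mapping argument on the space of narrowly continuous measure curves, and then to derive the six quantitative properties from the representation formula \eqref{eq:mod_pushforward} together with Gronwall-type estimates. The underlying workhorse throughout is the flat-metric stability of push-forwards: by the scaling property (Lemma \ref{scalingFlatNorm}), a Lipschitz map $f$ satisfies $\rho_F(f_\#\kappa, f_\#\nu) \leq \max\{\vert f\vert_{\mathrm{Lip}}, 1\}\,\rho_F(\kappa,\nu)$, while comparing two transports on the same mass gives $\rho_F(f_\#\kappa, g_\#\kappa) \leq \sup_{x}d\big(f(x),g(x)\big)\,\Vert\kappa\Vert_{TV}$. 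These two inequalities, combined with the coincidence $\Vert\cdot\Vert_{TV}=\Vert\cdot\Vert_{BL^*}$ on $\mathcal{M}^+(U)$ (Theorem \ref{propertiesM+}), reduce every estimate to the hypotheses on $c$, $N$, $\eta$, $X$ in Assumptions \ref{assumptionsOnModelFunctions}.

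First I would set up the fixed-point framework. Let $\mathcal{X}_R$ be the set of curves $\mu(\cdot)\in C^0\big([0,T],(\mathcal{M}^+(U),\rho_F)\big)$ with $\sup_t \Vert\mu(t)\Vert_{TV} \leq R$, a complete metric space under the uniform flat metric by Theorem \ref{propertiesM+}. Define the operator $\mathcal{A}$ sending a candidate $\mu(\cdot)$ to the right-hand side of \eqref{eq:mod_pushforward}. Using the uniform bounds and Lipschitz assumptions on the model functions together with the two push-forward inequalities, I would show that $\mathcal{A}$ maps $\mathcal{X}_R$ into itself for $R$ large and is a contraction with respect to a Bielecki-weighted norm $\sup_t e^{-\Lambda t}\rho_F(\cdot,\cdot)$ for $\Lambda$ large enough; Banach's fixed-point theorem then yields the unique solution on all of $[0,T]$ at once, establishing well-posedness.

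The quantitative estimates follow by testing the representation formula and applying Gronwall. For the uniform total-variation bound (1), I would test against $\psi\equiv 1$: push-forward preserves mass, the growth factor is bounded by $e^{T\Vert c\Vert_\infty}$, and the mass $\mu(t)(U)$ obeys a linear integral inequality driven by $\Vert N\Vert$ and $\sup_x\Vert\eta(\cdot,x)\Vert\,\mu(\tau)(U)$, so Gronwall gives a bound depending only on $T$, $\mu(0)(U)$ and the model norms. Lipschitz-in-time (2) comes from splitting $\rho_F\big(\mu(t_1),\mu(t_2)\big)$ term by term and using the modulus $\omega_{X,R}(\vert t_1-t_2\vert)\leq C\vert t_1-t_2\vert$ (Assumption \ref{assumptionsOnModelFunctions}\eqref{assumptionsXLipschitzTX}) together with the short-interval integral terms. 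Tightness (3) holds because each $X(t,\tau,\cdot,\mu(\cdot))$ is continuous and hence maps the compact tightness sets of $\mu(0)$, $N$ and $\eta$ (Assumption \ref{assumptionsOnModelFunctions}\eqref{Ass:ModelFunctionsTight}) into compact sets, so escaping mass is uniformly controlled. For the stability estimates (4) and (5) I would subtract two representation formulas, bound each of the three push-forward integrals with the stability inequalities above and the Lipschitz hypotheses, arriving at $\rho_F\big(\mu_1(t),\mu_2(t)\big)\leq (\text{data/function gap}) + C\int_0^t \sup_x\Vert\eta\Vert\,\rho_F\big(\mu_1(\tau),\mu_2(\tau)\big)\,\mathrm{d}\tau$, to which Gronwall yields exactly the stated exponential bounds. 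The two-parameter semigroup property (6) then follows from uniqueness plus the semigroup property of $X$ (Assumption \ref{assumptionsOnModelFunctions}\eqref{AssumptionSemigroup}): restarting the unique solution at an intermediate time reproduces the same curve, so the solution operators compose.

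The main obstacle I expect is the self-referential nonlinearity: the transport $X(t,\tau,\cdot,\mu(\cdot))$ depends on the whole trajectory $\mu(\cdot)$ and enters both inside the push-forward and, through $c$, inside the exponential weight $e^{\int c(\cdots,\mu)}$. The contraction estimate must therefore simultaneously control the flat-metric perturbation of $X_\#$ — whose bound in Assumption \ref{assumptionsOnModelFunctions}\eqref{assumptionsXLipschitzMeasure} is itself an integral in time — and the perturbation of the growth weight. Arranging these two nonlinear sources so that the resulting Gronwall kernel is integrable and the Bielecki weight genuinely delivers a contraction is the delicate point; the remainder is bookkeeping with the two push-forward inequalities.
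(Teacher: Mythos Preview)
Your approach is essentially correct and mirrors the strategy of the cited monograph \cite{düll_gwiazda_marciniak-czochra_skrzeczkowski_2021}: fixed-point existence via the representation formula, then Gronwall-type estimates for the quantitative bounds. The paper's own ``proof'' is in fact just a list of citations to specific results there (Theorem 3.31 for existence and initial-value stability; Lemmas 3.21, 3.23, 3.26 for the remaining estimates), so at the level of strategy you and the paper agree.

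There is one genuine methodological difference worth noting, concerning tightness (3). You propose to track compact sets directly: push the tightness sets of $\mu(0)$, $N$, $\eta$ forward under the continuous maps $X(t,\tau,\cdot,\mu(\cdot))$. This can be made to work, but your one-line sketch hides a uniformity issue --- for a \emph{single} compact set valid for all $t\in[0,T]$ you need that $(t,\tau,x)\mapsto X(t,\tau,x,\mu(\cdot))$ is jointly continuous on the compact product $[0,T]^2\times K$, so that its image is compact; and for the $\eta$-term you must also invoke the already-proved uniform TV bound to control the total escaping mass. The paper instead sidesteps all of this with a soft argument: since the solution is narrowly continuous and $[0,T]$ is compact, every sequence $\mu(t_n)$ has a narrowly convergent subsequence, and then Prokhorov's theorem (the converse direction) gives uniform tightness of $\{\mu(t):t\in[0,T]\}$ for free. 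This is shorter and uses nothing beyond narrow continuity, which is part of the definition of solution.

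One further point you did not address but the paper does: Assumption \ref{assumptionsOnModelFunctions}\eqref{assumptionsSupremum} requires only narrow continuity of $\eta$ in $x$, not the Lipschitz-in-$x$ condition assumed in \cite{düll_gwiazda_marciniak-czochra_skrzeczkowski_2021}. The paper remarks that the book's proofs still go through under this weaker hypothesis provided one uses the constant test function $\psi\equiv 1$ in the relevant estimates (their Theorems 3.22 and 3.24). Your sketch implicitly assumes the stronger book hypotheses; if you want to match the paper's stated assumptions you would need to verify this relaxation as well.
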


\begin{proof}

    The existence of the solutions as well as the Lipschitz continuity w.r.t. the initial values now follow directly from \cite[Theorem 3.31]{düll_gwiazda_marciniak-czochra_skrzeczkowski_2021}. Estimates \eqref{uniformBoundTV}, \eqref{Lipschitz_time} and \eqref{Lipschitz_model_functions} follow from \cite[Lemma 3.23, Lemma 3.26 and Lemma 3.21]{düll_gwiazda_marciniak-czochra_skrzeczkowski_2021} using the fact that solutions of the nonlinear model \eqref{eq:mod_pushforward} arise from a fixed point argument on the corresponding linear model. A careful study of the proofs in \cite[Chapter 3.2]{düll_gwiazda_marciniak-czochra_skrzeczkowski_2021} reveals that narrow continuity of $\eta$ in $x$ with uniformly bounded norm $\Vert\eta(t,x,\mu)\Vert_{BL^*}$ is sufficient for the existence and uniqueness of solutions. While the function $x\mapsto \int\limits_U \psi(y)\mathrm{d} [\eta(t,x)](y)$ for fixed $\psi\in BL(U)$ is no longer Lipschitz continuous (compare \cite[Lemma 3.20]{düll_gwiazda_marciniak-czochra_skrzeczkowski_2021}), we can avoid this by using $\psi(x)=1\in BL(U)$ as a test functions in the proofs of \cite[Theorem 3.22 and Theorem 3.24]{düll_gwiazda_marciniak-czochra_skrzeczkowski_2021} instead.\\
    The tightness of the solutions follows from the narrow continuity of the solutions by Prokhorov's theorem. For every subsequence $(\mu(t_n))_{n\in\mathbb{N}}$ of $\{\mu(t)\vert t\in[0,T]\}$, the sequence $(t_n)_{n\in\mathbb{N}}\subset[0,T]$ has a convergent subsequence with limit $t\in[0,T]$ by Heine-Borel's theorem. As the solution $\mu(\cdot)$ is narrowly continuous, $t_{n_k}\rightarrow t$ as $k\rightarrow \infty$ implies $\mu({t_{n_k}})\rightarrow \mu(t)$ narrowly. By Prokhorov's theorem \cite{Prokhorov}, the family of measures $\{\mu(t)\vert t\in[0,T]\}$ is tight.
\end{proof}

\begin{remark}
    The tightness of the solutions to \eqref{eq:mod_pushforward} justifies the approximation of the solutions on a subset of the space $U$ in Section \ref{Sec:NumericalScheme}. By approximating the solution on a sufficiently large subset, we capture the essential behavior of the solutions.
\end{remark}

\begin{lemma}[Weak formulation]
\label{Lemmma:weakform}
The structured population model \eqref{eq:mod_pushforward} admits the following weak formulation: a narrowly continuous family of measures 
$(\mu(t))_{t\in[0,T]}$ is a solution in the sense of Definition \ref{structuredPopulationModel} if and only if for all test functions $\psi\in BL(U)$the following is true
\begin{align}
\label{eq:Weakform}
    &\int\limits_U\psi(x) \mathrm{d}[\mu(t)] (x)
= \int\limits_U \psi\big(X(t,0,x,\mu(\cdot))\big)e^{\int\limits_0^t c(s,X(s,0,x,\mu(\cdot)),\mu_0)\mathrm{d}s}\mathrm{d}[\mu(0)](x)\\
&\quad+ \int\limits_0^t\int\limits_U\int\limits_U
\psi\big(X(t,\tau,y,\mu(\cdot))\big)e^{\int\limits_\tau^t c(s,X(s,\tau,y,\mu(\cdot)),\mu(s))\mathrm{d}s}\mathrm{d}[\eta(\tau,x,\mu(\tau))](y) \mathrm{d}[\mu(\tau)](x) \mathrm{d}\tau\\
&\quad+\int\limits_0^t\int\limits_U
\psi\big(X(t,\tau,y,\mu(\cdot))\big)e^{\int\limits_\tau^t c(s,X(s,\tau,y,\mu(\cdot)),\mu(s))\mathrm{d}s}\mathrm{d}[N(\tau,\mu(\tau))](y) \mathrm{d}\tau.
\end{align}
    
\end{lemma}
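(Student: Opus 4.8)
The plan is to prove both implications simultaneously by testing the measure identity \eqref{eq:mod_pushforward} against an arbitrary $\psi\in BL(U)$ and invoking the defining property of push-forward measures: for a Borel map $f:U\to U$, a finite $\nu\in\mathcal M^+(U)$ and a bounded Borel function $g$, one has $\int_U g\,\mathrm d(f_\#\nu)=\int_U g\circ f\,\mathrm d\nu$. First I would note that every summand on the right-hand side of \eqref{eq:mod_pushforward} has the shape $f_\#(\rho\,\nu)$, where $f=X(t,\tau,\cdot,\mu(\cdot))$ is Borel (indeed Lipschitz in $x$ by Assumption~\ref{assumptionsOnModelFunctions}\eqref{assumptionsXLipschitzTX}), $\nu$ is a finite nonnegative Radon measure, and $\rho$ is the exponential weight $e^{\int_\tau^t c(s,X(s,\tau,\cdot,\mu(\cdot)),\mu(s))\,\mathrm ds}$, which is a nonnegative bounded Borel density with $\Vert\rho\Vert_\infty\le e^{T\sup_{s,\mu}\Vert c(s,\cdot,\mu)\Vert_{BL}}<\infty$ by Assumption~\ref{assumptionsOnModelFunctions}\eqref{assumptionsSupremum}. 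Applying the change-of-variables formula to each of the three terms turns $\int_U\psi\,\mathrm d[\mu(t)]$ into exactly the three terms on the right of \eqref{eq:Weakform}, once the outer $\mathrm d\tau$-integrals (and, for the $\eta$-term, the inner integration in $x$) are exchanged with the spatial integration.

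For the $\eta$-term I would argue that, by definition, $\int_U\eta(\tau,x,\mu(\tau))(\cdot)\,\mathrm d[\mu(\tau)](x)$ is the measure $\nu_\tau\in\mathcal M^+(U)$ characterised by $\int_U g\,\mathrm d\nu_\tau=\int_U\!\int_U g(y)\,\mathrm d[\eta(\tau,x,\mu(\tau))](y)\,\mathrm d[\mu(\tau)](x)$ for all bounded Borel $g$; choosing $g(y)=\psi\big(X(t,\tau,y,\mu(\cdot))\big)e^{\int_\tau^t c(s,X(s,\tau,y,\mu(\cdot)),\mu(s))\,\mathrm ds}$ yields the triple-integral term of \eqref{eq:Weakform}. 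The exchange of the $\mathrm d\tau$-integral with the spatial (and $\mathrm d[\mu(\tau)](x)$-) integrals is then a Fubini--Tonelli step: the integrand is jointly measurable — it is assembled from the narrowly continuous (hence Borel in $\tau$) family $\mu(\cdot)$, the continuity of $X$ in its time and state arguments (Assumption~\ref{assumptionsOnModelFunctions}\eqref{assumptionsXLipschitzTX}), and the continuity of $c$ — and is dominated by $\Vert\psi\Vert_\infty e^{T\sup\Vert c\Vert_{BL}}$, while $\sup_{\tau\in[0,T]}\Vert\mu(\tau)\Vert_{TV}$, $\sup_{\tau,x}\Vert\eta(\tau,x,\mu(\tau))\Vert_{TV}$ and $\sup_\tau\Vert N(\tau,\mu(\tau))\Vert_{TV}$ are finite by Theorem~\ref{Thm:solution} and Assumption~\ref{assumptionsOnModelFunctions}\eqref{assumptionsSupremum}, so all iterated integrals converge absolutely and may be reordered.

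For the converse, suppose \eqref{eq:Weakform} holds for every $\psi\in BL(U)$. Running the above computation backwards shows that $\int_U\psi\,\mathrm d[\mu(t)]=\int_U\psi\,\mathrm d[\Phi(t)]$ for all such $\psi$, where $\Phi(t)$ denotes the right-hand side of \eqref{eq:mod_pushforward}, a well-defined element of $\mathcal M^+(U)$ (push-forwards of finite nonnegative measures are again such, and the $\mathrm d\tau$-integrals converge in the flat norm by the uniform mass bounds). Since $BL(U)$ separates points of $\mathcal M^+(U)$ — two measures with identical $BL$-integrals have flat distance zero, and $\rho_F$ is a genuine metric by Theorem~\ref{propertiesM+} — we conclude $\mu(t)=\Phi(t)$ for every $t\in[0,T]$, i.e.\ $\mu(\cdot)$ solves \eqref{eq:mod_pushforward}.

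The main obstacle is not depth but measure-theoretic bookkeeping: one must verify that each integrand appearing in the nested integrals — in particular $(\tau,x,y)\mapsto\psi\big(X(t,\tau,y,\mu(\cdot))\big)e^{\int_\tau^t c(\cdots)\,\mathrm ds}$ — is jointly Borel measurable and uniformly bounded, so that the change-of-variables and Fubini--Tonelli manipulations are legitimate; this joint measurability has to be teased out of the continuity/regularity hypotheses on $X$ and $c$ in Assumption~\ref{assumptionsOnModelFunctions}.
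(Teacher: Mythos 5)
Your plan is correct, and in substance it matches what the paper does — it just does by hand what the paper outsources to citations. For the forward direction the paper simply invokes Lemma 3.15 of the monograph it builds on (via the fact that nonlinear solutions are fixed points of the linear solution operator), and that lemma is exactly your computation: test \eqref{eq:mod_pushforward} with $\psi\in BL(U)$, use $\int g\,\mathrm d(f_\#\nu)=\int g\circ f\,\mathrm d\nu$ on each of the three weighted push-forwards, and exchange the $\mathrm d\tau$- and spatial integrals by Fubini under the uniform bounds on $c$, $\eta$, $N$ and $\Vert\mu(\tau)\Vert_{TV}$. For the converse the paper applies the Hahn--Banach theorem together with the duality between $BL(U)$ and the completion of the space of signed measures under the flat norm; your argument — all $BL$-integrals of $\mu(t)$ and of the right-hand side $\Phi(t)$ coincide, hence $\rho_F(\mu(t),\Phi(t))=0$, hence $\mu(t)=\Phi(t)$ since $\rho_F$ is a genuine metric on $\mathcal M^+(U)$ — is the same separation principle phrased more elementarily, and you correctly flag the prerequisite that $\Phi(t)$ is a well-defined element of $\mathcal M^+(U)$ (the $\mathrm d\tau$-integrals of measures converge thanks to the uniform mass bounds). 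The only bookkeeping point worth making explicit, which you acknowledge but do not carry out, is the joint measurability in $\tau$: Assumption \ref{assumptionsOnModelFunctions}\eqref{assumptionsXLipschitzTX} gives continuity of $X$ only in its first time argument and in $x$, so continuity in the second time argument (needed for measurability of $(\tau,y)\mapsto\psi\big(X(t,\tau,y,\mu(\cdot))\big)$) has to be derived by combining that assumption with the semigroup property \ref{assumptionsOnModelFunctions}\eqref{AssumptionSemigroup}; this is routine and does not affect the validity of your approach.
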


\begin{proof}
    If $(\mu(t))_{t\in[0,T]}$ is a family of solutions, the formulation \eqref{eq:Weakform} follows directly form \cite[Lemma 3.15]{düll_gwiazda_marciniak-czochra_skrzeczkowski_2021} using the fact that the nonlinear solutions of the nonlinear model are fixed points of the solution operator of the linear model.\\
    The reverse direction follows by applying Hahn-Banach Theorem to the space $\overline{\mathcal M (U)}^{\Vert\cdot\Vert_{BL^*}}$ which has $BL(U)$ as its dual space \cite[Theorem 1.41]{düll_gwiazda_marciniak-czochra_skrzeczkowski_2021}.
\end{proof}

\section{Additional Lemmata}
In this section, we state some useful lemmata that are used throughout the paper.
\begin{lemma}{\cite[Lemma 2.22]{düll_gwiazda_marciniak-czochra_skrzeczkowski_2021}}
\label{iterationLemma}
    Let $a,b,\in\mathbb R_{\geq 0}$. If a sequence $(u_k)_{k\in\mathbb{N}}$ satisfies 
    \begin{align}
        \vert u_{k+1}\vert
        \leq a \vert u_k\vert +b
    \end{align}
    for $k\in\mathbb{N}$, then it holds that
    \begin{align}
        \vert u_k\vert \leq a^k\vert u_0\vert + 
        \begin{cases}
            \frac{a^k-1}{a-1}b &\text{ if }a\neq 1\\
            kb &\text{ else }
        \end{cases}
    \end{align}
\end{lemma}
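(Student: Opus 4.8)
The plan is to prove this discrete Gr\"onwall-type inequality by a straightforward induction on $k \ge 0$, which is the natural approach given the one-step recursive bound $|u_{k+1}| \le a|u_k| + b$. An equivalent route is to unroll the recursion directly and recognise the resulting finite geometric series; I will outline the induction, but both amount to the same bookkeeping.

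First I would treat the generic case $a \neq 1$. The base case $k=0$ is immediate, since the claimed bound then reads $|u_0| \le |u_0| + \frac{a^0-1}{a-1}\,b = |u_0|$. For the inductive step, assuming $|u_k| \le a^k|u_0| + \frac{a^k-1}{a-1}\,b$, I would insert this into the hypothesis $|u_{k+1}| \le a|u_k| + b$ and simplify:
\begin{align}
|u_{k+1}| &\le a\left(a^k|u_0| + \frac{a^k-1}{a-1}\,b\right) + b \\
&= a^{k+1}|u_0| + \frac{a^{k+1}-a}{a-1}\,b + b = a^{k+1}|u_0| + \frac{a^{k+1}-1}{a-1}\,b,
\end{align}
where the last equality uses $\frac{a^{k+1}-a}{a-1} + 1 = \frac{a^{k+1}-a+a-1}{a-1} = \frac{a^{k+1}-1}{a-1}$. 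This closes the induction. Equivalently, unrolling the recursion $k$ times gives $|u_k| \le a^k|u_0| + b\sum_{j=0}^{k-1} a^j$, and the geometric sum equals $\frac{a^k-1}{a-1}$.

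The case $a=1$ is handled by the same induction with the geometric sum replaced by $k$: the base case is $|u_0| \le |u_0|$, and the step gives $|u_{k+1}| \le |u_k| + b \le |u_0| + kb + b = |u_0| + (k+1)b$. The degenerate value $a=0$ is harmless, since then $a^k = 0$ for $k\ge 1$ and the asserted bound reduces to $|u_k| \le b$, which follows directly from the recursion. I do not expect any genuine obstacle here: the only points needing a little care are the algebraic identity for the geometric sum and the separate treatment of the exceptional value $a=1$; the remainder is routine.
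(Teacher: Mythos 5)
Your induction is correct and complete: the base case, the geometric-sum identity $\frac{a^{k+1}-a}{a-1}+1=\frac{a^{k+1}-1}{a-1}$, the separate case $a=1$, and the use of $a\geq 0$ to multiply the inductive hypothesis through are all in order. Note that the paper itself gives no proof of this lemma (it is quoted from \cite[Lemma 2.22]{ düll_gwiazda_marciniak-czochra_skrzeczkowski_2021}), and your argument is exactly the standard one-step unrolling/induction that establishes it, so there is nothing further to reconcile.
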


The following Lemma is easily verified by a short calculation:
\begin{lemma}
\label{boundednessZeta}
Given a measure $\nu\in\mathcal{M}^+(U)$ and times $t,\tau\in[0,T]$, the function $x\mapsto\exp\left(\int\limits_\tau^t c(s,x,\nu)\mathrm{d}s\right)$ is a bounded Lipschitz function.
\end{lemma}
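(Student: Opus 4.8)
The plan is to deduce everything from the uniform control of the growth function $c$ in its state variable. By Assumption~\ref{assumptionsOnModelFunctions}\eqref{assumptionsSupremum}, the quantity $\sup_{s\in[0,T]}\sup_{\mu\in\mathcal{M}^+(U)}\Vert c(s,\cdot,\mu)\Vert_{BL}$ is finite, and since $\Vert\cdot\Vert_{BL}$ is by definition the maximum of the supremum norm and the Lipschitz seminorm, both
\begin{align*}
  \Vert c\Vert_\infty := \sup_{s\in[0,T]}\sup_{x\in U}\sup_{\mu\in\mathcal{M}^+(U)}\vert c(s,x,\mu)\vert
  \quad\text{and}\quad
  \vert c\vert_{Lip} := \sup_{s\in[0,T]}\sup_{\mu\in\mathcal{M}^+(U)}\vert c(s,\cdot,\mu)\vert_{Lip}
\end{align*}
are finite. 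Moreover, by the time regularity imposed in Assumption~\ref{assumptionsOnModelFunctions}\eqref{assumptionsOnModelFunctionsTime} (indeed \eqref{C1assumption}), the map $s\mapsto c(s,x,\nu)$ is continuous on $[0,T]$ and hence integrable, so $\int_\tau^t c(s,x,\nu)\,\mathrm{d}s$ is well defined for any ordering of $t,\tau\in[0,T]$.

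I would first treat boundedness: for every $x\in U$ one has $\bigl\vert\int_\tau^t c(s,x,\nu)\,\mathrm{d}s\bigr\vert\le\vert t-\tau\vert\,\Vert c\Vert_\infty\le T\Vert c\Vert_\infty$, whence
\begin{align*}
  0<e^{-T\Vert c\Vert_\infty}\le\exp\Bigl(\int_\tau^t c(s,x,\nu)\,\mathrm{d}s\Bigr)\le e^{T\Vert c\Vert_\infty}=:M,
\end{align*}
uniformly in $x$; in particular the supremum norm of the function in question is at most $M$.

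For Lipschitz continuity I would fix $x_1,x_2\in U$, set $a_i:=\int_\tau^t c(s,x_i,\nu)\,\mathrm{d}s$, and combine the elementary inequality $\vert e^{a_1}-e^{a_2}\vert\le\max\{e^{a_1},e^{a_2}\}\,\vert a_1-a_2\vert\le M\,\vert a_1-a_2\vert$ (mean value theorem for $\exp$, using $\vert a_i\vert\le T\Vert c\Vert_\infty$ from the previous step) with the estimate $\vert a_1-a_2\vert\le\int_\tau^t\vert c(s,x_1,\nu)-c(s,x_2,\nu)\vert\,\mathrm{d}s\le T\,\vert c\vert_{Lip}\,d(x_1,x_2)$, valid for both orderings of $t$ and $\tau$. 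This yields a Lipschitz constant $M\,T\,\vert c\vert_{Lip}$, so that $\bigl\Vert x\mapsto\exp(\int_\tau^t c(s,x,\nu)\,\mathrm{d}s)\bigr\Vert_{BL}\le\max\{M,\,M\,T\,\vert c\vert_{Lip}\}<\infty$, which is exactly the claim.

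There is no genuine obstacle here — this is the "short calculation" promised in the text. The only two points meriting a word of care are the well-definedness of the time integral, which follows from continuity of $c$ in time, and the fact that the integrand may change sign and $t-\tau$ may be negative, so that the exponent must be controlled in absolute value rather than merely from above; neither of these affects the stated bounds.
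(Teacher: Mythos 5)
Your proof is correct and is precisely the ``short calculation'' the paper alludes to without writing out: uniform bounds on $\Vert c(s,\cdot,\nu)\Vert_{BL}$ from Assumption \ref{assumptionsOnModelFunctions}\eqref{assumptionsSupremum} give the bound $T\Vert c\Vert_\infty$ on the exponent, and the mean value theorem for $\exp$ together with the Lipschitz seminorm of $c$ gives the Lipschitz constant $e^{T\Vert c\Vert_\infty}\,T\,\vert c\vert_{Lip}$. Your attention to the sign of $t-\tau$ and to the well-definedness of the time integral is a welcome addition, but the argument is the same one the authors intended.
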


\end{document}